\newtheorem{theorem}{Theorem}[section]
\newtheorem{proposition}[theorem]{Proposition}
\newtheorem{lemma}[theorem]{Lemma}
\theoremstyle{definition}
\newtheorem{example}[theorem]{Example}
\numberwithin{equation}{section}
\newcommand{\ba}{\mathbf{a}}
\newcommand{\bb}{\mathbf{b}}
\newcommand{\bu}{\mathbf{u}}
\newcommand{\bv}{\mathbf{v}}
\newcommand{\cG}{\mathcal{G}}
\newcommand{\cK}{\mathcal{K}}
\newcommand{\cU}{\mathcal{U}}
\title[Hausdorff dimension of double base expansions]{Hausdorff dimension of double base expansions and binary shifts with a hole}
\author[J. Lu] {Jian Lu}
\address[J. Lu \& Y. Zou]{School of Mathematical Sciences, Shenzhen University, 518061, P.R.China }
\email{jianlu@szu.edu.cn, yuruzou@szu.edu.cn}
\author[W. Steiner]{Wolfgang Steiner}
\address[W. Steiner]{Universit\'{e} Paris Cit\'{e}, CNRS, IRIF, F--75013 Paris, France}
\email{steiner@irif.fr}
\author[Y. Zou]{Yuru Zou}
\keywords{Double base expansions, univoque set, survivor set, topological entropy, Hausdorff dimension, open dynamical system}
\subjclass{11A63, 28A78, 11B83, 37B10, 37B40, 68R15}
\thanks{The second author was supported by the ERC grant DynAMiCs (101167561) of the European Research Council, the bilateral grant SYMDYNAR (ANR-23-CE40-0024 and FWF I 6750) of the Agence Nationale de la Recherche and the Austrian Science Fund, and by the ANR project IZES (ANR-22-CE40-0011). The third author was supported by the National Natural Science Foundation of China (NSFC) 12571098, Shenzhen Basis Research Project JCYJ20210324094006017, Natural Science Foundation of
Guangdong Province of China 2023A1515011691. }
\begin{document}

\begin{abstract}
For two real bases $q_0, q_1 > 1$, a binary sequence $i_1 i_2 \cdots \in \{0,1\}^\infty$ is the $(q_0,q_1)$-expansion of the number
\[
\pi_{q_0,q_1}(i_1 i_2 \cdots) = \sum_{k=1}^\infty \frac{i_k}{q_{i_1} \cdots q_{i_k}}.
\]
Let $\mathcal{U}_{q_0,q_1}$ be the set of all real numbers having a unique $(q_0,q_1)$-expansion. 
When the bases are equal, i.e., $q_0 = q_1 = q$, Allaart and Kong (2019) established the continuity in $q$ of the Hausdorff dimension of the univoque set~$\mathcal{U}_{q,q}$, building on the work of Komornik, Kong, and Li (2017). 
We derive explicit formulas for the Hausdorff dimension of $\mathcal{U}_{q_0,q_1}$ and the entropy of the underlying subshift for arbitrary $q_0, q_1 > 1$, and prove the continuity of these quantities as functions of $(q_0,q_1)$. 
Our results also concern general dynamical systems described by binary shifts with a hole, including, in particular, the doubling map with a hole and (linear) Lorenz maps.
\end{abstract}

\maketitle

\section{Introduction}
For a pair of real bases $q_0, q_1 > 1$, a sequence $i_1i_2\cdots \in \{0,1\}^\infty$ is called a \emph{$(q_0,q_1)$-expansion} of~$x$  if
\[
x = \pi_{q_0,q_1}(i_1i_2\cdots) := \sum_{k=1}^\infty \frac{i_k}{q_{i_1}\cdots q_{i_k}}.
\]
If $q_0=q_1=q$, then the sequence $i_1i_2\cdots$ is called a \emph{$q$-expansion} of~$x$. 
The set
\[
J_{q_0, q_1} := \pi_{q_0,q_1}\big(\{0,1\}^\infty\big)
\]
is a self-similar subset of $[0,\frac{1}{q_1-1}]$ generated by the \emph{iterated function system} (IFS)
\[
\{f_0(x) = x/q_0,\ f_1(x) = (x{+}1)/q_1\}.
\]
By \cite[Lemma~3.1]{KomSteZou2022}, we could replace the \emph{digits}~$0,1$ by real numbers $d_0, d_1$ satisfying $d_0(q_1{-}1) \ne d_1(q_0{-}1)$, i.e., consider the IFS $\{f_0(x) = \frac{x+d_0}{q_0},\, f_1(x) = \frac{x+d_0}{q_0}\}$, without changing our results.

For $q_0 {+} q_1 < q_0q_1$ , i.e., $\frac{1}{q_0} {+} \frac{1}{q_1} < 1$, the IFS satisfies the strong separation~condition, and then each $x\in J_{q_0, q_1}$ has a unique $(q_0, q_1)$-expansion. 
When $q_0 {+} q_1 = q_0q_1$, every $x \in J_{q_0, q_1}$ has a unique $(q_0, q_1)$-expansion except for a countable set of exceptions that have precisely two expansions. 
When $q_0 {+} q_1 > q_0q_1$, the situation is much more complicated. 
For instance, when $q_0 = q_1 = q \in (1,2)$, Lebesgue almost every $x \in J_{q,q}$ has a continuum of expansions; see \cite{DajdeV2007,Sid2003,Sid2007}. 
The study of representations of real numbers in non-integer bases (or simply $q$-expansions) has drawn significant attention since the seminal papers of R\'enyi~\cite{Ren1957} and Parry~\cite{Par1960}.
It has connections to many other areas of mathematics such as fractal geometry, ergodic theory, symbolic dynamics and number theory \cite{AlcBarBakKon2019,All2017,AllBakKong2019,AllKong2019,AllKong2021a,AllKong2021b,AllKong2023,AllKong2024,BakKon2020,BoyDecHal2016,Cla2016,DajdeV2007,DeVKom2009,KalKonLiLu2019,KalSte2012,KomKonLi2017,KomSteZou2022,KongLi2015}.

In the present paper, we are interested in the \emph{univoque set}
\[
\cU_{q_0, q_1} := \big\{x\in J_{q_0, q_1}: \#\pi_{q_0, q_1}^{-1}(x)=1\big\},
\]
for $q_0{+}q_1\geq q_0q_1$.
By definition, each $x\in \cU_{q_0,q_1}$ has a unique $(q_0, q_1)$-expansion. 
The univoque set is called trivial if $\cU_{q_0, q_1}=\{0, 1/(q_1{-}1)\}$.
For $q_0 = q_1 = q$, the univoque set $\cU_{q, q}$ was first properly studied by Erd\H{o}s et al.\ in the early 1990s \cite{ErdHorJoo1991,ErdJoo1992a,ErdJooKom1990}. 
Since then, many interesting properties of this set have been described \cite{KomLor1998, DeVKom2009, Bak2014, All2017,   AllKong2021a,AllKong2021b,ZouLiLuKom2021,DevKomLor2022}. 
In particular, the univoque set corresponds to the survivor set of a dynamical system with a hole.
This kind of open dynamical systems was first considered in the seminal paper by Pianigiani and Yorke~\cite{PiaYor1979} and has received a lot of attention in recent years \cite{Alc2014, GleSid2015, Cla2016, BakKon2020, KalKonLanLi2020, AllKong2023, AllKong2024}.

Let us recall remarkable results of Glendinning and Sidorov~\cite{GleSid2001}: If $1 < q \le \frac{1+\sqrt{5}}{2}$, then $\cU_{q,q}$ is trivial; if $\frac{1+\sqrt{5}}{2} < q < q_{\mathrm{KL}}$, then $\cU_{q,q}$ is countable infinite; if $q = q_{\mathrm{KL}}$, then $\cU_{q,q}$ is uncountable but of zero Hausdorff dimension; if $q > q_{\mathrm{KL}}$, then $\cU_{q,q}$ has positive Hausdorff dimension. Here, $q_{\mathrm{KL}}\approx 1.787$ denotes the Komornik--Loreti constant, i.e., the smallest base $q > 1$ where $x = 1$ has a unique $q$-expansion.
 
Recently, the Hausdorff dimension of the set~$\cU_{q,q}$ and the entropy of the coresponding set of sequences $U_{q,q} := \pi^{-1}_{q, q}(\cU_{q,q})$ have been the subject of a large number of research articles \cite{KongLi2015, KomKonLi2017, AlcBarBakKon2019,AllKong2019,KalKonLiLu2019,AllBakKong2019,KonLiLuWanXu2020,AllKong2021a,AllKong2021b}.
In particular, Komornik, Kong and Li~\cite{KomKonLi2017} established an explicit relation between the Hausdorff dimension of $\cU_{q,q}$ and the entropy of $U_{q, q}$; building on this result, Allaart and Kong \cite{AllKong2019} proved the continuity of this dimension function in~$q$.
We state these results only for the two digit case, even though they also hold for larger digit sets.
For $q\in (q_{KL}, 2]$, the Hausdorff dimension of the univoque set~$\cU_{q,q}$ is
\begin{equation*}\label{e:HausdorffD-KKL}
\dim_H{\cU_{q,q}} = \frac{h(U_{q,q})}{\log q},
\end{equation*}
where 
\[
h(U_{q,q}) = \lim_{n\to\infty} \frac{1}{n} \log \#\{i_1\cdots i_n \,:\, i_1i_2\cdots \in U_{q,q}\}
\]
is the \emph{topological entropy} of~$U_{q,q}$.
(Even though $U_{q,q}$ is usually not closed, we can use the formula for the topological entropy of subshifts because taking the closure only adds countable many elements, which does not change the entropy defined by Bowen~\cite{Bow1973}.)
The function $q \mapsto h(U_{q,q})$ is proved to be continuous on $(1, 2]$. 

Now, we return to the case of two distinct bases~$q_0,q_1$. 
Similar to the case of one base, we can also describe two thresholds: one separating bases with trivial and non-trivial univoque sets, called generalised golden ratio; the other one separating bases with countable and uncountable univoque
sets, called generalised Komornik–Loreti constant. 
More precisely, for $q_0 > 1$, define
\[
\begin{aligned}
\cG(q_0) & := \inf\big\{q_1 > 1 \,:\, \cU_{q_0, q_1} \neq \big\{0, \tfrac{1}{q_1-1}\big\}\big\}, \\
\cK(q_0) & := \inf \big\{q_1 > 1 \,:\, \cU_{q_0, q_1}\ \text{is uncountable}\big\}.
\end{aligned}
\]
Then for all $q_0>1$,
\begin{itemize}
\item 
$\cU_{q_0, q_1}$ is trivial when $q_1 < \cG(q_0)$;
\item 
$\cU_{q_0, q_1}$ is infinite when $q_1 > \cG(q_0)$;
\item 
$\dim_H\cU_{q_0, q_1} = 0$  when $q_1 \le \cK(q_0)$;
\item 
$\dim_H\cU_{q_0, q_1}>0$  when $q_1>\cK(q_0)$.	 
\end{itemize}
Together with Komornik~\cite{KomSteZou2022}, the second and third authors gave a complete characterisation of the functions~$\cG(q_0)$ and~$\cK(q_0)$, proved that the two functions are  symmetric in $q_0,q_1$, continuous and strictly decreasing, hence almost everywhere differentiable on $(1,\infty)$. 
The Hausdorff dimension of the set of~$q_0$ satisfying $\cG(q_0) = \cK(q_0)$  is zero.
The main goal of the present paper is to determine the exact Hausdorff dimension of $\cU_{q_0, q_1}$ when $q_1 > \cK(q_0)$ and to explore its continuity in~$q_0$ and~$q_1$.
We put a particular emphasis on the symbolic dynamics behind the expansions and use these dynamics not only as a tool but state some of the main results in symbolic terms.
Since several different problems have the same symbolic background (and are topologically equivalent), these results can then be applied in different settings. 

To state our results, we recall some notation.
The \emph{lexicographic order} is defined by $u_1u_2\cdots \prec v_1v_2\cdots$ if $u_1\cdots u_n = v_1\cdots v_n$ and $u_{n+1} < v_{n+1}$ for some $n \ge 0$ (and $\bu \preceq \bv$ if $\bu \prec \bv$ or $\bu = \bv$). 
For $q_0,q_1>1$ with $q_0{+}q_1 \ge q_0q_1$, the \emph{quasi-greedy} $(q_0,q_1)$-expansion of a number $x \in (0,\frac{1}{q_1-1}]$ is the lexicographically largest sequence $\bu \in \{0,1\}^\infty$ satisfying $\pi_{q_0,q_1}(\bu) = x$ and not ending with~$0^\infty$. 
Similarly, the \emph{quasi-lazy} $(q_0,q_1)$-expansion of a number $x \in [0,\frac{1}{q_1-1})$ is the lexicographically smallest sequence $\bu \in \{0,1\}^\infty$ satisfying $\pi_{q_0,q_1}(\bu) = x$ and not ending with~$1^\infty$. 
Denote the quasi-greedy $(q_0,q_1)$-expansion of $\frac{1}{q_1}$ by~$\ba_{q_0,q_1}$ and the quasi-lazy $(q_0,q_1)$-expansion of $\frac{1}{q_0(q_1-1)}$ by~$\bb_{q_0,q_1}$.
Since $\frac{1}{q_1} = \pi_{q_0,q_1}(10^\infty)$ and $\frac{1}{q_0(q_1-1)} = \pi_{q_0,q_1}(01^\infty)$, the sequence $\ba_{q_0,q_1}$ starts with~$01$ and $\bb_{q_0,q_1}$ starts with~$10$.

By \cite{KomLuZou2022, KomSteZou2022}, the \emph{set of unique $(q_0,q_1)$-expansions} is given by
\[
U_{q_0,q_1} = \big\{i_1i_2\cdots \in \{0,1\}^\infty \,:\, i_ni_{n+1}\cdots \notin [\ba_{q_0,q_1},\bb_{q_0,q_1}]\ \text {for all}\ n \ge 1\big\},
\]
with the closed interval $[\ba,\bb] := \{\bu \in \{0,1\}^\infty : \ba \preceq \bu \preceq \bb\}$. 
It turns out to be more convenient to consider the open interval ${]\ba,\bb[} := \{\bu \in \{0,1\}^\infty : \ba \prec \bu \prec \bb\}$ instead of the closed interval $[\ba,\bb]$, and to define
\[
\Omega_{\ba,\bb} := \big\{i_1i_2\cdots \in \{0,1\}^\infty \,:\, i_ni_{n+1}\cdots \notin {]\ba,\bb[}\ \text {for all}\ n \ge 1\big\}.
\]
Then $\Omega_{\ba,\bb}$ is closed and shift-invariant, i.e., a subshift of $\{0,1\}^\infty$. 
The set $U_{q_0,q_1}$ differs from $\Omega_{\ba_{q_0, q_1},\bb_{q_0, q_1}}$ only by sequences ending in $\ba_{q_0,q_1}$ or $\bb_{q_0,q_1}$, i.e., by a countable set, which implies that
\[
\dim_H\cU_{q_0, q_1} = \dim_H \pi_{q_0, q_1}(\Omega_{\ba_{q_0, q_1},\bb_{q_0, q_1}}).
\]
The sets $\Omega_{\ba, \bb}$ and~$U_{q_0, q_1}$ are closely related to the set of kneading sequences of Lorenz maps and the doubling map with a general hole \cite{HubSpa1990, BarSteVin2014, GleSid2015}

An important role is played by the sequences 
\[
\ell(\ba,\bb) := \max \{\bu \in \Omega_{\ba,\bb} \,:\, \bu \preceq \ba\}, \quad r(\ba,\bb) := \min \{\bu \in \Omega_{\ba,\bb} \,:\, \bu \succeq \bb\},
\]
which are well defined because $\Omega_{\ba,\bb}$ is compact and contains $\{0^\infty,1^\infty\}$.
We have $\ell(\ba,\bb), r(\ba,\bb) \in \Omega_{\ell(\ba,\bb),r(\ba,\bb)} = \Omega_{\ba,\bb}$; see Lemma~\ref{l:laba} below.

Our first main result provides explicit formulae for the Hausdorff dimension of the univoque set~$\cU_{q_0,q_1}$.
Using this theorem, we plot $\dim_H \cU_{q_0,q_1}$ for $1 < q_0,q_1 \le 2.5$ in Figure~\ref{fig:t1}.
The statement of Theorem~\ref{t:t1}~(\ref{i:12}) seems to be new even for $q_0 = q_1 = q$ (but follows rather easily from known results in this case). 

\begin{theorem}\label{t:t1}
Let $s := \dim_H \cU_{q_0, q_1}$, with $q_0,q_1 > 1$.
\begin{enumerate}[\upshape (i)]
\itemsep1ex
\item \label{i:11}
If $q_1 \le \cK(q_0)$, then $s = 0$.
\item \label{i:12}
If $\cK(q_0) < q_1 < \frac{q_0}{q_0-1}$, then $0< s < 1$, and $s$ is the maximal root of 
\[
\pi_{q_0^s,q_1^s}\big(\ell(\ba_{q_0,q_1},\bb_{q_0,q_1})\big) = \pi_{q_0^s,q_1^s}\big(r(\ba_{q_0,q_1},\bb_{q_0,q_1})\big).
\]
In particular, if $\ba_{q_0, q_1}, \bb_{q_0, q_1} \in \Omega_{\ba_{q_0, q_1},\bb_{q_0, q_1}}$, then $s$ is the maximal root of 
\[
\pi_{q_0^s,q_1^s}(\ba_{q_0, q_1}) = \pi_{q_0^s,q_1^s}(\bb_{q_0, q_1}).
\]
\item \label{i:13}
If $q_1 = \frac{q_0}{q_0-1}$, then $s = 1$. 
\item \label{i:14}
If $q_1 > \frac{q_0}{q_0-1}$, then $0 < s < 1$ and $s$ is the maximal root of $q_0^{-s} {+} q_1^{-s} = 1$. 
\end{enumerate}
\end{theorem}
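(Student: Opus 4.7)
The plan is to treat the four cases separately. Cases (i), (iii), and (iv) are comparatively short. Case (i) is already recorded in the introduction as part of the characterisation from \cite{KomSteZou2022}: for $q_1 \le \cK(q_0)$ the univoque set is at most countable, hence $s = 0$. For (iii), the equation $q_1 = q_0/(q_0-1)$ rewrites as $q_0^{-1} + q_1^{-1} = 1$, which is the just-touching case of the IFS; then $J_{q_0,q_1}$ fills the interval $[0,1/(q_1-1)]$ and $\cU_{q_0,q_1}$ differs from it by only the countable set of points with two $(q_0,q_1)$-expansions, yielding $s = 1$. For (iv), the strong separation condition holds, $\cU_{q_0,q_1} = J_{q_0,q_1}$, and the Moran--Hutchinson theorem gives $s$ as the unique positive root of $q_0^{-s} + q_1^{-s} = 1$.

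Case (ii) carries the real content. Writing $\ba = \ba_{q_0,q_1}$ and $\bb = \bb_{q_0,q_1}$, I start from the symbolic identity $\dim_H \cU_{q_0,q_1} = \dim_H \pi_{q_0, q_1}(\Omega_{\ba, \bb})$ noted in the introduction and compute the right-hand side via a Moran-type pressure analysis on $\Omega_{\ba,\bb}$ equipped with the contraction weights $q_0^{-s}, q_1^{-s}$. Because the $\pi_{q_0,q_1}$-image of a cylinder $[i_1\cdots i_n]$ is an interval of length $(q_{i_1}\cdots q_{i_n})^{-1}/(q_1-1)$, covers by cylinder images give the upper bound $\dim_H \pi_{q_0,q_1}(\Omega_{\ba,\bb}) \le s_*$, where $s_*$ is the critical exponent at which the admissible Moran sum $\sum (q_{i_1}\cdots q_{i_n})^{-s}$ turns from divergent to convergent. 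For the matching lower bound I plan to construct a quasi-Bernoulli measure on $\Omega_{\ba,\bb}$ with well-controlled cylinder masses, push it forward by $\pi_{q_0,q_1}$, and apply the mass distribution principle; here the key point is that the avoidance of the open window $]\ba,\bb[$ re-creates positive separation between admissible cylinders at every scale and gives the bounded-distortion estimate needed despite the overlapping IFS.

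The algebraic core is the identification of $s_*$ with the maximal root of $\pi_{q_0^s, q_1^s}(\ell_{\ba,\bb}) = \pi_{q_0^s, q_1^s}(r_{\ba,\bb})$. I plan to show that $F(s) := \pi_{q_0^s,q_1^s}(r_{\ba,\bb}) - \pi_{q_0^s, q_1^s}(\ell_{\ba,\bb})$ equals, up to positive factors, the limiting Moran gap produced by forbidding $]\ba,\bb[$ in the auxiliary IFS with contractions $q_0^{-s}, q_1^{-s}$, so that its vanishing is exactly the just-touching condition at the critical exponent. The conversion between the Moran gap and the symbolic quantity $F(s)$ is a short telescoping computation based on the defining property of $\ell, r$ as the shift-orbit extrema of $\Omega_{\ba,\bb}$ (Lemma~\ref{l:laba}). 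The ``in particular'' clause is then immediate, because $\ba, \bb \in \Omega_{\ba,\bb}$ forces $\ell_{\ba,\bb} = \ba$ and $r_{\ba,\bb} = \bb$.

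The main obstacle is that $\Omega_{\ba,\bb}$ is in general neither a subshift of finite type nor sofic, so the Moran sum is not governed by a single Perron eigenvalue and no explicit algebraic equation is available at finite level. My plan to bypass this is to approximate $\Omega_{\ba,\bb}$ from the inside by subshifts of finite type produced by progressively longer admissible prefixes of $\ell$ and $r$: each approximant admits an explicit Moran equation whose root can be compared with the limiting identity, and the passage to the limit uses joint continuity of $\pi_{q_0^s, q_1^s}$ in $s$ and in its sequence argument, together with the monotone convergence of the approximate critical exponents to $s_*$. The strict inequalities $0 < s < 1$ in this range will then follow from the monotonicity of $F$ at the two endpoints $q_1 = \cK(q_0)$ and $q_1 = q_0/(q_0-1)$.
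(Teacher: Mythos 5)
Your overall architecture for case (ii) --- cylinder covers for the upper bound, a mass-distribution argument for the lower bound, and identification of the resulting critical exponent with the maximal root of $\pi_{q_0^s,q_1^s}(\ell_{\ba,\bb})=\pi_{q_0^s,q_1^s}(r_{\ba,\bb})$ --- is the same route the paper takes (it packages the first two steps into a generalized Moran construction and cites Barreira, after checking the separation condition $\max\pi_{q_0,q_1}([0]_{\ba',\bb'})<\min\pi_{q_0,q_1}([1]_{\ba',\bb'})$, which in the range $\cK(q_0)<q_1<\frac{q_0}{q_0-1}$ follows from $\pi_{q_0,q_1}(\ba')\le\frac1{q_1}<\frac1{q_0(q_1-1)}\le\pi_{q_0,q_1}(\bb')$). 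The genuine gap is in the step you describe as ``a short telescoping computation'': passing from the critical exponent $s_*$ of the Moran sum $\sum_{i_1\cdots i_n\in L_n}(q_{i_1}\cdots q_{i_n})^{-s}$ to the vanishing of $F(s)=\pi_{q_0^s,q_1^s}(\bb')-\pi_{q_0^s,q_1^s}(\ba')$ is the real content of the theorem, not a formality. The paper does it by decomposing every admissible word uniquely as a word of $Q_{\ba,\bb,k}$ (return words after which both $\ba$ and $\bb$ may follow) times a prefix of $\ba$, and again times a prefix of $\bb$; this yields the generating-function identities $L=\frac{z_1}{1-z_1}+AQ$ and $Q\cdot K=\frac{z_1}{1-z_1}$ with $K(q_0^{-s},q_1^{-s})=F(s)$, so that divergence of $L$ at $s_*$ forces $Q\to\infty$ and hence $K\to 0$. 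Nothing in the ``defining property of $\ell,r$ as shift-orbit extrema'' produces this; you need the combinatorial decomposition (or an equivalent kneading-determinant identity), and without it the claim that the Moran gap ``equals, up to positive factors,'' $F(s)$ is unsubstantiated. Relatedly, your proposed inside-approximation by subshifts of finite type is both unnecessary (Barreira's theorem, or your own pressure argument, applies directly to $\Omega_{\ba,\bb}$) and itself nontrivial: you would have to prove that the approximate critical exponents converge to $s_*$ and that the approximate algebraic equations converge to the stated one, which is essentially the continuity problem the paper treats separately in Section~3.

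Two smaller points. In case (i) you justify $s=0$ by saying $\cU_{q_0,q_1}$ is at most countable for $q_1\le\cK(q_0)$; by the definition of $\cK(q_0)$ as an infimum this is only guaranteed for $q_1<\cK(q_0)$, and at $q_1=\cK(q_0)$ the set may be uncountable of zero dimension (as at the Komornik--Loreti constant), so you should instead argue via $h(U_{q_0,q_1})=0$. In case (ii) the strict inequalities $0<s<1$ need an actual argument: $s>0$ comes from $h(\Omega_{\ba',\bb'})>0$ (i.e., $q_1>\cK(q_0)$), and $s<1$ comes from $F(1)>0$ together with the strict monotonicity of $(q_1^t-1)F(t)=1/Q(q_0^{-t},q_1^{-t})$ above the largest root; ``monotonicity of $F$ at the two endpoints'' as stated does not yet deliver this. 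Cases (iii) and (iv) are fine.
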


\begin{figure}[ht]
\centerline{\includegraphics{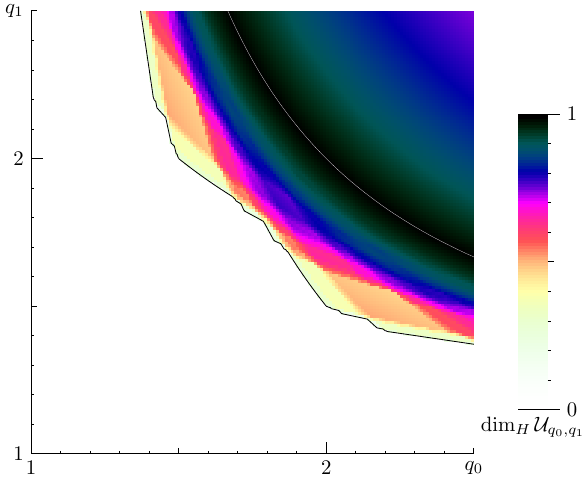}}
\caption{Numerical values of $\dim_H \mathcal{U}_{q_0,q_1}$ for $1 < q_0, q_1 \le 2.5$, represented by colours. The black curve bordering the domain~with $\dim_H \mathcal{U}_{q_0,q_1} = 0$ is $q_1 = \cK(q_0)$, given by \cite[Theorem~2.1]{KomSteZou2022}. We have $\dim_H \mathcal{U}_{q_0,q_1} = 1$ if and only if $(q_0{-}1)(q_1{-}1) = 1$ (thin white curve).} \label{fig:t1}
\end{figure}

The main ingredient of the proof of Theorem~\ref{t:t1} is the following result. 
We state it only for the set of \emph{admissible pairs}
\[
W := \big\{(\ba,\bb) \in 0\{0,1\}^\infty \times 1\{0,1\}^\infty \,:\, \ba,\bb \in \Omega_{\ba,\bb}\big\};
\] 
if $\ba$ or $\bb$ is not in $\Omega_{\ba,\bb}$, then we can replace $(\ba,\bb)$ by $(\ell(\ba,\bb),r(\ba,\bb))$.

\begin{theorem} \label{t:dimH}
Let $(\ba,\bb) \in W$ and $q_0,q_1>1$. 
\begin{enumerate}[\upshape(i)]
\itemsep1ex
\item \label{i:21}
If $\pi_{\beta,\beta}(\ba) < \pi_{\beta,\beta}(\bb)$ for all $\beta > 1$, then
$h(\Omega_{\ba,\bb}) = 0 = \dim_H \pi_{q_0,q_1}(\Omega_{\ba,\bb})$. 
\item \label{i:22}
Otherwise, we have
\[
\begin{aligned}
h(\Omega_{\ba,\bb}) & = \log \beta \ \ \mbox{for the maximal $\beta > 1$ such that}\ \pi_{\beta,\beta}(\ba) = \pi_{\beta,\beta}(\bb), \\
\dim_H \pi_{q_0,q_1}(\Omega_{\ba,\bb}) & = \min\{1,s\} \ \mbox{for the maximal $s > 0$ s.t.\ $\pi_{q_0^s,q_1^s}(\ba) = \pi_{q_0^s,q_1^s}(\bb)$}.
\end{aligned}
\]
\end{enumerate}
\end{theorem}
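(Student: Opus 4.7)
\medskip

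The strategy is to prove the entropy formula first and then deduce the Hausdorff dimension formula by an ``$s$-twist'' that replaces the uniform base~$\beta$ by the pair $(q_0^s, q_1^s)$. Both statements rest on realising $\Omega_{\ba,\bb}$, up to a countable set, as the itinerary space of a piecewise-linear expanding map on an interval with two increasing branches --- a Lorenz-like map. Throughout, the role of ``maximal'' $\beta$ (or $s$) is to pick the consistency condition that makes the two branches of this map meet at a single discontinuity.

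To compute the entropy in case~(\ref{i:22}), given the maximal $\beta>1$ solving $\pi_{\beta,\beta}(\ba)=\pi_{\beta,\beta}(\bb)$, I would build $T_\beta \colon [0,\tfrac{1}{\beta-1}]\to[0,\tfrac{1}{\beta-1}]$ with two branches of slope~$\beta$, placing the discontinuity so that the one-sided itineraries of the jump are $\ba$ and $\bb$; the kneading equation is exactly the compatibility condition. The $0/1$ itinerary map is a topological semi-conjugacy, injective off a countable set, so $h(\Omega_{\ba,\bb})=h(T_\beta)=\log\beta$ since a piecewise-linear map with constant expansion $\beta$ has topological entropy $\log\beta$. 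In case~(\ref{i:21}), where $g(\beta) := \pi_{\beta,\beta}(\bb)-\pi_{\beta,\beta}(\ba) > 0$ for all $\beta>1$, no consistent Lorenz map exists; I would bound complexity from above by classifying admissible words of length~$n$ according to how often their shifts stay near $[\ba,\bb]$, using the positivity of~$g$ to force subexponential growth, hence $h(\Omega_{\ba,\bb})=0$ and, a~fortiori, zero Hausdorff dimension.

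For the dimension in case~(\ref{i:22}), the equation $\pi_{q_0^s,q_1^s}(\ba)=\pi_{q_0^s,q_1^s}(\bb)$ should be read as the vanishing of the topological pressure of the geometric potential $\phi(\bu)=-s\log q_{u_1}$ on $\Omega_{\ba,\bb}$. I would repeat the above construction with a two-slope Lorenz map having slopes $q_0^s$ and $q_1^s$; the kneading equation becomes the consistency condition for its discontinuity, and a non-uniform Milnor--Thurston formula gives $P(\phi)=0$ at the maximal such $s$. Bowen's formula then identifies this $s$ with $\dim_H \pi_{q_0,q_1}(\Omega_{\ba,\bb})$: the upper bound uses the natural cylinder cover with diameters $(q_{i_1}\cdots q_{i_n})^{-1}$, and the lower bound uses an equilibrium measure of pointwise dimension~$s$ pushed forward by $\pi_{q_0,q_1}$. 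The truncation at $\min\{1,s\}$ simply reflects that $\pi_{q_0,q_1}(\Omega_{\ba,\bb})$ lies in the real line.

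The hard step will be the lower bound in the overlapping regime $q_0^{-1}+q_1^{-1}>1$, where the IFS $\{f_0,f_1\}$ fails the open set condition and distinct codings can a~priori collide under $\pi_{q_0,q_1}$. The key point is that on $\Omega_{\ba,\bb}$ such collisions are exceptional: an equality $\pi_{q_0,q_1}(\bu)=\pi_{q_0,q_1}(\bv)$ with $\bu\ne\bv$ would force some shift of $\bu$ or $\bv$ to lie in the open hole~${]\ba,\bb[}$, contradicting membership in $\Omega_{\ba,\bb}$. Establishing this ``injectivity modulo a countable set'' is what allows the symbolic pressure computation to descend cleanly to the geometric Hausdorff dimension and to match the Lorenz-map computation of entropy.
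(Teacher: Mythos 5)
Your outline identifies the right objects (the kneading invariant, a pressure/Bowen-type equation, Lorenz maps), but it has a genuine gap at exactly the point you flag as ``the hard step'', and you resolve it in the wrong direction. The claimed injectivity of $\pi_{q_0,q_1}$ on $\Omega_{\ba,\bb}$ modulo a countable set is false: a collision $\pi_{q_0,q_1}(\bu)=\pi_{q_0,q_1}(\bv)$ does not force any shift of $\bu$ or $\bv$ into the symbolic hole ${]\ba,\bb[}$, because the hole is an order-interval of sequences while overlapping is a metric phenomenon governed by whether $\max \pi_{q_0,q_1}([0]_{\ba,\bb}) < \min \pi_{q_0,q_1}([1]_{\ba,\bb})$. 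Concretely, $(\ba,\bb)=(01^\infty,10^\infty)\in W$ gives $\Omega_{\ba,\bb}=\{0,1\}^\infty$ with empty hole, yet for $q_0=q_1=q<2$ Lebesgue-almost every point has a continuum of codings. This is precisely the regime where the maximal root satisfies $s>1$, and there your argument yields neither the lower bound $\dim_H=1$ (the pushforward of the equilibrium measure must collapse dimension, and you have no control over by how much) nor anything beyond the trivial upper bound. The paper handles this case by a different mechanism: when the two first-level cylinders overlap, $\pi_{q_0,q_1}(\Omega_{\ba,\bb})$ is the \emph{entire} interval $\big[0,\frac{1}{q_1-1}\big]$ (Lemma~\ref{l:full}, proved by constructing for each $x$ an itinerary under $T_0,T_1$ that remains in $\Omega_{\ba,\bb}$), and Lemma~\ref{l:smallfull} reconciles this with the value $\min\{1,s\}$.

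A second gap: you assert, but do not supply, the link between the maximal root of the kneading equation and the growth rate of the language. Saying the kneading equation is ``exactly the compatibility condition'' and invoking a ``non-uniform Milnor--Thurston formula'' presupposes a realization theorem for Lorenz maps with prescribed kneading pair at a prescribed slope, does not explain why the relevant root is the \emph{maximal} one, and gives no mechanism for case~(\ref{i:21}) (your ``subexponential growth'' sketch is not an argument). The paper's engine here is elementary and combinatorial: the return-word decomposition of admissible words (Lemma~\ref{l:decomposition}) yields the generating-function identities \eqref{e:LAQ} and \eqref{e:QK}, in particular $Q_{\ba,\bb}\cdot K_{\ba,\bb}=z_1/(1-z_1)$; divergence of $L_{\ba,\bb}$ at the critical parameter then forces $K_{\ba,\bb}$ to vanish there, which simultaneously proves the entropy formula, case~(\ref{i:21}), and (combined with Barreira's Moran construction in the separated case, Lemma~\ref{l:Barreira}) the dimension formula. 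Some substitute for this identity --- or a full Hubbard--Sparrow realization argument plus an identification of the slope with the maximal root --- is indispensable and missing from your proposal.
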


The following example illustrates these formulas.

\begin{example} \label{ex:1}
Let $q_0 \approx 2.247$ satisfy $q_0^3 = 2 q_0^2 {+} q_0 {-} 1$ and $q_1 = 1 {+} \frac{1}{q_0} \approx 1.445$.
Then the quasi-greedy $(q_0,q_1)$-expansion of $\frac{1}{q_1}$ is $\ba_{q_0,q_1} = 011 (100)^\infty$, the quasi-lazy $(q_0,q_1)$-expansion of $\frac{1}{q_0(q_1-1)} =1 $ is $\bb_{q_0,q_1} = (10)^\infty$, the extremal elements of $\Omega_{\ba_{q_0,q_1},\bb_{q_0,q_1}}$ are $\ell(011(100)^\infty,(10)^\infty) = (011)^\infty$ and $r(011(100)^\infty,(10)^\infty) = (10)^\infty$.
Therefore, the set of unique $(q_0,q_1)$-expansions $U_{q_0,q_1}$ is $\Omega_{(011)^\infty,(10)^\infty}$, except for sequences ending with $(10)^\infty$.
The topological entropy of this set is $h(\Omega_{(011)^\infty,(10)^\infty}) = \log \beta$ with $\beta \approx 1.325$ satisfying $\frac{\beta+1}{\beta^3-1} = \frac{\beta}{\beta^2-1}$, i.e., $\beta^3 = \beta {+} 1$, and the Hausdorff dimension of the univoque set is $\dim_H \cU_{q_0,q_1} = s \approx 0.512$ with $\frac{q_1^s+1}{q_0^sq_1^{2s}-1} = \frac{q_0^s}{q_0^sq_1^s-1}$. 
\end{example}

Moreover, we obtain the following continuity results. 
Let
\[
d_{q_0,q_1}:\, 0\{0,1\}^\infty \times 1\{0,1\}^\infty \to [0,1], \quad (\ba,\bb) \mapsto \dim_H \pi_{q_0,q_1}(\Omega_{\ba,\bb}),
\]
and endow $\{0,1\}^\infty$, as usually, with the product topology of the discrete topology, i.e., two sequences are close when they agree on a long initial part. 
Labarca and Moreira~\cite{LabMor2006} proved that the entropy of the subshift $\Sigma_{\ba,\bb} := \{i_1i_2\cdots \in \{0,1\}^\infty  : i_ni_{n+1}\cdots \in [\ba,\bb]\ \mbox{for all}\ n \ge 1\}$ varies continuously in $(\ba,\bb)$, and we know e.g.\ from \cite[Theorem~2.6]{KomSteZou2022} that $h(\Sigma_{\ba,\bb}) = h(\Omega_{0\bb,1\ba})$.
Our proof of the following theorem is based on Theorem~\ref{t:dimH} and thus rather different from that in~\cite{LabMor2006}. 

\begin{theorem}\label{t:t2a}
Let $q_0,q_1 > 1$. 
Then the function $d_{q_0,q_1}$ is continuous.
\end{theorem}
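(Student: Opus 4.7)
The plan is to exploit the explicit formula from Theorem~\ref{t:dimH}: $d_{q_0,q_1}(\ba,\bb) = \min\{1, s(\ba,\bb)\}$ where $s(\ba,\bb) \ge 0$ is the maximal positive root of $\pi_{q_0^s,q_1^s}(\ell_{\ba,\bb}) = \pi_{q_0^s,q_1^s}(r_{\ba,\bb})$ (and $s(\ba,\bb) = 0$ if no such root exists), and to prove that $s$ varies continuously. A key technical tool is the joint continuity of the map $(s, \bu, \bv) \mapsto \pi_{q_0^s, q_1^s}(\bu) - \pi_{q_0^s, q_1^s}(\bv)$ on $(0,\infty) \times \{0,1\}^\infty \times \{0,1\}^\infty$. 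I will prove upper and lower semi-continuity separately along a convergent sequence $(\ba_n, \bb_n) \to (\ba, \bb)$ in the product topology.

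For upper semi-continuity, compactness allows me to pass to a subsequence along which $(\ell_{\ba_n,\bb_n}, r_{\ba_n,\bb_n}) \to (\ell^*, r^*)$ and $s(\ba_n,\bb_n) \to s^*$. The closedness of $\preceq$ in the product topology, combined with a pigeonhole argument applied to each tail of $\ell_{\ba_n,\bb_n}$ (which satisfies $T^k \ell_{\ba_n,\bb_n} \preceq \ba_n$ or $T^k \ell_{\ba_n,\bb_n} \succeq \bb_n$), shows that $\ell^* \in \Omega_{\ba,\bb}$ and $\ell^* \preceq \ba$, hence $\ell^* \preceq \ell_{\ba,\bb}$; symmetrically $r^* \succeq r_{\ba,\bb}$, so $\Omega_{\ell^*, r^*} \subseteq \Omega_{\ba,\bb}$. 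Passing to the limit in the defining equation gives $\pi_{q_0^{s^*}, q_1^{s^*}}(\ell^*) = \pi_{q_0^{s^*}, q_1^{s^*}}(r^*)$; Theorem~\ref{t:dimH} applied to the admissible pair in $W$ representing $\Omega_{\ell^*, r^*}$, together with the monotonicity of Hausdorff dimension under inclusion, then implies $s^* \leq d_{q_0,q_1}(\ba,\bb)$.

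For lower semi-continuity --- the main obstacle, since $(\ba,\bb) \mapsto (\ell_{\ba,\bb}, r_{\ba,\bb})$ is not continuous, the extremal elements being sensitive to small perturbations of the hole at boundary tails --- the plan is to use inner approximation by subshifts of finite type. For each $N \geq 1$, define $\Omega^{(N)}$ to be the set of $\bu \in \{0,1\}^\infty$ all of whose length-$N$ factors are lexicographically strictly less than $a_1 \cdots a_N$ or strictly greater than $b_1 \cdots b_N$, where $\ba = a_1 a_2 \cdots$ and $\bb = b_1 b_2 \cdots$. This is a subshift of finite type with $\Omega^{(N)} \subseteq \Omega_{\ba,\bb}$; standard SFT-approximation arguments (replacing $(\ell_{\ba,\bb}, r_{\ba,\bb})$ by periodic admissible pairs close to it in $W$ and applying Theorem~\ref{t:dimH}, for which the root depends continuously on the finitely many controlling symbols) yield $\dim_H \pi_{q_0,q_1}(\Omega^{(N)}) \to d_{q_0,q_1}(\ba,\bb)$ as $N \to \infty$. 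Since $\ba_n, \bb_n$ agree with $\ba, \bb$ on the first $N$ symbols for all sufficiently large $n$, the same SFT $\Omega^{(N)}$ embeds into $\Omega_{\ba_n, \bb_n}$ as well, whence $d_{q_0,q_1}(\ba_n, \bb_n) \geq \dim_H \pi_{q_0,q_1}(\Omega^{(N)})$. Taking $\liminf$ in $n$ and then sending $N \to \infty$ completes the argument.
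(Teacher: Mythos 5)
Your upper semicontinuity argument is essentially sound: passing to a subsequential limit $(\ell^*,r^*)$ of $(\ell_{\ba_n,\bb_n},r_{\ba_n,\bb_n})$, the closedness of $\preceq$ and the pigeonhole argument do give $\ell^* \preceq \ba$, $r^* \succeq \bb$ and $(\ell^*,r^*) \in W$, so that $s^*$ is \emph{a} root of the limiting equation, hence at most the maximal root, hence $\min\{1,s^*\} \le d_{q_0,q_1}(\ell^*,r^*) \le d_{q_0,q_1}(\ba,\bb)$ by Theorem~\ref{t:dimH} and monotonicity (with a little extra care when $s_n \to 0$ or $s_n \to \infty$). This is a reasonable alternative to part of what the paper does via Lemmas~\ref{l:abinOmega} and~\ref{l:sclose}.

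The gap is in lower semicontinuity, at the step you label ``standard SFT-approximation arguments.'' The claim $\dim_H \pi_{q_0,q_1}(\Omega^{(N)}) \to d_{q_0,q_1}(\ba,\bb)$ is not standard: since $\Omega^{(N)} = \Omega_{\ba^{(N)},\bb^{(N)}}$ for a specific sequence $(\ba^{(N)},\bb^{(N)}) \to (\ba,\bb)$, it is itself a one-sided instance of the continuity you are trying to prove, and the mechanism you propose for it does not work in general. To lower-bound $\dim_H\pi_{q_0,q_1}(\Omega^{(N)})$ by Theorem~\ref{t:dimH} via a periodic pair $(\bu^\infty,\bv^\infty) \in W$ with root close to $s(\ba,\bb)$, you need $\Omega_{\bu^\infty,\bv^\infty} \subseteq \Omega^{(N)}$, which forces $\bu^\infty \prec a_1\cdots a_N 0^\infty$ and $\bv^\infty \succ b_1\cdots b_N 1^\infty$, i.e.\ admissible periodic pairs lying \emph{strictly outside} the hole yet close to $(\ba,\bb)$. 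Such pairs need not exist: this is exactly the hypothesis of Lemma~\ref{l:abcont}, and Examples~\ref{ex3}--\ref{ex6} exhibit pairs $(\ba,\bb) \in W$ where the natural periodic approximants land on the wrong side (e.g.\ for the Fibonacci pair of Example~\ref{ex3}, $\varphi_k((10)^\infty) \prec \bb$ and $\varphi'_k((01)^\infty) \succ \ba$ for all $k$), or where one endpoint is ``renormalizable'' and $(\ell_{\ba',\bb'},r_{\ba',\bb'})$ jumps. Note also that your own observation that $(\ba,\bb) \mapsto (\ell_{\ba,\bb},r_{\ba,\bb})$ is discontinuous applies to the pairs $(\ba^{(N)},\bb^{(N)})$: the extremal elements of $\Omega^{(N)}$ can jump away from $(\ba,\bb)$, so ``the root depends continuously on the finitely many controlling symbols'' is precisely what fails. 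Handling these degenerate configurations is the content of Propositions~\ref{p:nontogether} and~\ref{p:together}, whose case analysis (according to whether some tail of $\ba$ equals $\bb$ or of $\bb$ equals $\ba$) and substitution constructions ($\varphi_k$, $\varphi'_k$, and the squeeze of Lemma~\ref{l:phik}) occupy most of the paper's proof; your argument replaces all of this with an assertion.
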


We can now extend the continuity of $\dim_H \cU_{q,q}$ to that of $\dim_H \cU_{q_0,q_1}$. 
Again, our proof is different from that of \cite{KomKonLi2017,AllKong2019}.

\begin{theorem}\label{t:t2}
The function $(q_0,q_1) \mapsto \dim_H \cU_{q_0,q_1}$ is continuous for $q_0,q_1 > 1$.
\end{theorem}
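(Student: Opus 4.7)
The plan is to combine the explicit formulas of Theorem~\ref{t:t1} with the sequence-side continuity statement of Theorem~\ref{t:t2a}, splitting the parameter region $(1,\infty)^2$ into the four sub-regions of Theorem~\ref{t:t1} and checking continuity across the boundary curves $q_1=\cK(q_0)$ and $q_1 = q_0/(q_0-1)$. In case~(iv), the exponent $s$ is defined implicitly by $q_0^{-s}+q_1^{-s}=1$, an equation smooth in $(q_0,q_1,s)$ with non-vanishing partial in $s$, so $s$ is an analytic function of $(q_0,q_1)$; case~(iii) fixes $s=1$ on $q_1=q_0/(q_0-1)$, which glues to case~(iv) trivially, and case~(i) is constant $s=0$. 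So all that remains is the interior of case~(ii), together with the matches to cases~(i) and~(iii) at the boundaries.

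In case~(ii), the strategy is to establish joint continuity of the four-variable map $(q_0,q_1,\ba,\bb)\mapsto d_{q_0,q_1}(\ba,\bb)$ on its admissible domain and then pre-compose with $(q_0,q_1)\mapsto(\ba_{q_0,q_1},\bb_{q_0,q_1})$. Continuity in $(\ba,\bb)$ for fixed $(q_0,q_1)$ is exactly Theorem~\ref{t:t2a}; continuity in $(q_0,q_1)$ for fixed $(\ba,\bb)\in W$ follows from Theorem~\ref{t:dimH} by viewing $s$ as the unique positive root of
\[
G(q_0,q_1,s) := \pi_{q_0^s,q_1^s}(\ba) - \pi_{q_0^s,q_1^s}(\bb),
\]
a jointly smooth function, and applying the implicit function theorem once one checks that the partial derivative in~$s$ has a definite sign at the root (this is essentially the strict monotonicity of $\pi_{q_0^s,q_1^s}(\bu)$ in~$s$ for $\bu\ne 0^\infty$). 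A standard equicontinuity argument on compact subsets then upgrades separate continuity to joint continuity of $d_{q_0,q_1}(\ba,\bb)$ in all four variables.

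The main obstacle is that the sequence-valued map $(q_0,q_1)\mapsto(\ba_{q_0,q_1},\bb_{q_0,q_1})$ need not be continuous: quasi-greedy and quasi-lazy expansions jump at certain exceptional parameter values. I would deal with this by sequential compactness of $\{0,1\}^\infty\times\{0,1\}^\infty$: for any sequence $(q_0^{(n)},q_1^{(n)})\to(q_0,q_1)$, extract a subsequence along which $(\ba^{(n)},\bb^{(n)})\to(\ba^\ast,\bb^\ast)$, and then show, using the explicit structural description of these expansions and of their jumps from \cite{KomSteZou2022}, that after passage to the extremal pair the subshifts $\Omega_{\ba^\ast,\bb^\ast}$ and $\Omega_{\ba_{q_0,q_1},\bb_{q_0,q_1}}$ differ only by countably many sequences, so their $\pi_{q_0,q_1}$-images have the same Hausdorff dimension. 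Combined with the joint continuity established above, this yields $\dim_H \cU_{q_0^{(n)},q_1^{(n)}}\to\dim_H\cU_{q_0,q_1}$ along the subsequence; since every subsequence admits a sub-subsequence with this property, the whole sequence converges. The matches at $q_1=\cK(q_0)$ and $q_1=q_0/(q_0-1)$ follow by the same argument applied to sequences approaching the boundary from within case~(ii), together with the continuity of~$\cK$ from \cite{KomSteZou2022}.
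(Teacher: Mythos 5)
Your overall architecture---continuity of $d_{q_0,q_1}$ in the symbolic pair (Theorem~\ref{t:t2a}), continuity in the bases for a fixed pair, and a separate treatment of the jump discontinuities of $(q_0,q_1)\mapsto(\ba_{q_0,q_1},\bb_{q_0,q_1})$---is essentially the paper's, which chains Theorem~\ref{t:t2a} with Lemma~\ref{l:qclose} after analysing the jumps directly (the case split along the four regions of Theorem~\ref{t:t1} is unnecessary). However, two of your justifications do not work as stated. First, the implicit function theorem applied to $G(q_0,q_1,s)=\pi_{q_0^s,q_1^s}(\ba)-\pi_{q_0^s,q_1^s}(\bb)$: this is a \emph{difference} of two functions that are each strictly decreasing in~$s$, so $\partial G/\partial s$ has no definite sign in general, and the monotonicity of $s\mapsto\pi_{q_0^s,q_1^s}(\bu)$ is not the relevant fact. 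What the paper actually uses (Lemma~\ref{l:sclose}) is the combinatorial identity $\tilde{K}_{\ba,\bb}(t)=1/Q_{\ba,\bb}(q_0^{-t},q_1^{-t})$, valid only for $(\ba,\bb)\in W$ and only for $t$ \emph{above} the largest root; this yields strict monotonicity above the root but does not preclude a vanishing derivative at the root itself, which is why the paper argues via analyticity, $\tilde{K}_{\ba,\bb}(s{-}\varepsilon)\ne 0$, and a two-case intermediate-value/monotonicity argument rather than the IFT. Second, ``separate continuity plus a standard equicontinuity argument gives joint continuity'' is not a valid inference: separate continuity does not imply joint continuity, and you have not established equicontinuity of the family $\{(q_0,q_1)\mapsto d_{q_0,q_1}(\ba,\bb)\}$. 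A correct fix exists---for instance, the neighbourhoods in the root-localization argument can be chosen uniformly because $(q_0,q_1,\ba,\bb,t)\mapsto\tilde{K}$ is jointly continuous, or one can exploit the monotonicity of $d_{q_0,q_1}(\ba,\bb)$ in $\ba$ and $\bb$---but some such argument must actually be supplied.

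The third gap is that the heart of the proof---identifying the possible limit points $(\ba^\ast,\bb^\ast)$ of $(\ba_{q_0',q_1'},\bb_{q_0',q_1'})$ and showing that $d_{q_0,q_1}(\ba^\ast,\bb^\ast)=d_{q_0,q_1}(\ba_{q_0,q_1},\bb_{q_0,q_1})$---is delegated to ``the explicit structural description of these expansions and of their jumps'' in \cite{KomSteZou2022} without proof. The paper carries this out concretely: $(q_0,q_1)\mapsto\ba_{q_0,q_1}$ is continuous unless $T_{a_n}\circ\cdots\circ T_{a_1}(1/q_1)=1/q_1$ for some~$n$, i.e.\ unless $\ba_{q_0,q_1}=(a_1\cdots a_m)^\infty$ is periodic, in which case the nearby expansions accumulate at $a_1\cdots a_m10^\infty$, and Lemma~\ref{l:ab} shows that replacing the perturbed $\ba$ by $(a_1\cdots a_m)^\infty$ leaves the subshift unchanged (so the two subshifts are in fact equal, not merely equal up to a countable set). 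Without this short but essential computation your subsequence argument is incomplete.
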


Finally, let $\bu \in 0\{0,1\}^*$, $\bv \in 1\{0,1\}^*$, where $\{0,1\}^*$ denotes the set of finite words over the alphabet $\{0,1\}$.
If $h(\Omega_{\bu^\infty,\bv^\infty}) > 0$, then we know from \cite[Lemma~8]{BarSteVin2014} that, under certain conditions, $h(\Omega_{\bu\bv^\infty,\bv\bu^\infty}) = h(\Omega_{\bu^\infty,\bv^\infty})$ and thus the entropy function $(\ba,\bb) \mapsto h(\Omega_{\ba,\bb})$ is constant on the rectangle $[\bu^\infty, \bu\bv^\infty]\times [\bv\bu^\infty, \bv^\infty]$; see also \cite[Theorem~3]{SilSR2002}.
The following theorem removes the technical conditions of \cite[Lemma~8]{BarSteVin2014} and shows that a similar result holds for the Hausdorff dimension of $\pi_{q_0, q_1}(\Omega_{\ba, \bb})$.
Note that the condition $h(\Omega_{\bu^\infty,\bv^\infty}) > 0$ cannot be omitted, e.g., we have $\Omega_{0^\infty,1^\infty} = \{0^\infty,1^\infty\}$, $\Omega_{01^\infty,10^\infty} = \{0,1\}^\infty$, and also $h(\Omega_{(01)^\infty,(10)^\infty}) = 0$, $h(\Omega_{01(10)^\infty,10(01)^\infty}) = \frac{1}{2} \log 2$.

\begin{theorem} \label{t:equaldim}
Let $\bu \in 0\{0,1\}^*$, $\bv \in 1\{0,1\}^*$.
If $h(\Omega_{\bu^\infty,\bv^\infty}) > 0$, then
\begin{equation} \label{e:equaldimuv} d_{q_0,q_1}(\bu^\infty,\bv^\infty) = d_{q_0,q_1}(\bu\bv^\infty,\bv\bu^\infty) \quad \mbox{for all}\ q_0,q_1 > 1,
\end{equation}
thus $h(\Omega_{\bu^\infty,\bv^\infty}) = h(\Omega_{\bu\bv^\infty,\bv\bu^\infty})$.   

For all $\bu \in 0\{0,1\}^*$\!, $\bv \in 1\{0,1\}^*$\!, $\ba \in 0\{0,1\}^\infty$\!, $\bb \in 1\{0,1\}^\infty$\!, $q_0,q_1 > 1$, we~have
\[
d_{q_0,q_1}(\bu^\infty,\bb) = d_{q_0,q_1}(\bu\bb,\bb) \quad \mbox{and} \quad d_{q_0,q_1}(\ba,\bv^\infty) = d_{q_0,q_1}(\ba,\bv\ba),
\]
thus $h(\Omega_{\bu^\infty,\bb}) = h(\Omega_{\bu\bb,\bb})$ and $h(\Omega_{\ba,\bv^\infty}) = h(\Omega_{\ba,\bv\ba})$.
\end{theorem}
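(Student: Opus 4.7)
The plan is to recast every claimed equality, via Theorem~\ref{t:dimH}(\ref{i:22}), as an equality of the maximal $s>0$ satisfying projection equations of the form $\pi_{q_0^s,q_1^s}(\cdot) = \pi_{q_0^s,q_1^s}(\cdot)$, and then to verify this coincidence on the projection side through a direct algebraic identity. (Whenever needed, $d_{q_0,q_1}$-values for non-admissible pairs are first transferred to the extremal pair $(\ell_{\ba,\bb},r_{\ba,\bb}) \in W$, on which Theorem~\ref{t:dimH} applies.)

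For a finite word $\bu$, set $P_u := \pi_{q_0,q_1}(\bu 0^\infty)$ and $Q_u := q_0^{|\bu|_0} q_1^{|\bu|_1}$. Then $\pi_{q_0,q_1}(\bu\bw) = P_u + \pi_{q_0,q_1}(\bw)/Q_u$ for any infinite $\bw$, and $\pi_{q_0,q_1}(\bu^\infty) = P_u Q_u/(Q_u-1)$. Substituting $P_u = \pi_{q_0,q_1}(\bu^\infty)(Q_u-1)/Q_u$ in $\pi_{q_0,q_1}(\bu\bb)$ yields
\[
\pi_{q_0,q_1}(\bu\bb) - \pi_{q_0,q_1}(\bb) = \bigl(\pi_{q_0,q_1}(\bu^\infty) - \pi_{q_0,q_1}(\bb)\bigr)\,\frac{Q_u-1}{Q_u},
\]
and, symmetrically, $\pi_{q_0,q_1}(\bv\ba) - \pi_{q_0,q_1}(\ba) = \bigl(\pi_{q_0,q_1}(\bv^\infty) - \pi_{q_0,q_1}(\ba)\bigr)(Q_v-1)/Q_v$. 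Since $Q_u,Q_v>1$, the nonvanishing prefactor shows that, at bases $(q_0^s,q_1^s)$, the two projection equations have the same solution set in $s$. Theorem~\ref{t:dimH}(\ref{i:22}) then delivers the easy identities $d_{q_0,q_1}(\bu^\infty,\bb) = d_{q_0,q_1}(\bu\bb,\bb)$ and $d_{q_0,q_1}(\ba,\bv^\infty) = d_{q_0,q_1}(\ba,\bv\ba)$. The asserted entropy equalities follow by specializing to $q_0=q_1=q$ large enough to make the $\min\{1,\cdot\}$ truncation in Theorem~\ref{t:dimH}(\ref{i:22}) inactive, and converting back via $q^s = \beta$.

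For the main identity under $h(\Omega_{\bu^\infty,\bv^\infty})>0$, the analogous computation gives
\[
\pi_{q_0,q_1}(\bu\bv^\infty) - \pi_{q_0,q_1}(\bv\bu^\infty) = \bigl(\pi_{q_0,q_1}(\bu^\infty) - \pi_{q_0,q_1}(\bv^\infty)\bigr)\Bigl(1 - \frac{1}{Q_u} - \frac{1}{Q_v}\Bigr).
\]
At bases $(q_0^s,q_1^s)$ the factor $1 - 1/Q_u(s) - 1/Q_v(s)$ is strictly increasing in $s$ (from $-1$ at $s=0^+$ to $1$ at $s=\infty$), hence has a unique root $s^*>0$. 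Writing $s_1,s_2$ for the maximal $s>0$ solving $\pi(\bu^\infty)=\pi(\bv^\infty)$ and $\pi(\bu\bv^\infty)=\pi(\bv\bu^\infty)$ respectively at these scaled bases, we deduce $s_2 = \max(s_1,s^*)$, whence by Theorem~\ref{t:dimH}(\ref{i:22})
\[
d_{q_0,q_1}(\bu\bv^\infty,\bv\bu^\infty) = \max\bigl(\min\{1,s_1\},\,\min\{1,s^*\}\bigr).
\]
The inclusion $\Omega_{\bu^\infty,\bv^\infty}\subseteq\Omega_{\bu\bv^\infty,\bv\bu^\infty}$ supplies the easy direction, so the task reduces to proving $d_{q_0,q_1}(\bu^\infty,\bv^\infty)\ge \min\{1,s^*\}$.

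This lower bound is the main obstacle. The plan is to identify $s^*$ as the similarity dimension at $(q_0,q_1)$ of the two-map contracting IFS $\{T_u(y) = P_u + y/Q_u,\;T_v(y) = P_v + y/Q_v\}$, whose attractor encodes $\pi_{q_0,q_1}$-projections of $\bu,\bv$-concatenations. The hypothesis $h(\Omega_{\bu^\infty,\bv^\infty})>0$, via Theorem~\ref{t:dimH}(\ref{i:22}), produces a scaled base $(q_0^{s_1},q_1^{s_1})$ at which the fixed points of $T_u$ and $T_v$ coincide; this coincidence controls the overlap of the IFS sufficiently to embed enough $\bu,\bv$-concatenations into $\Omega_{\bu^\infty,\bv^\infty}$ that a Bernoulli measure with weights $Q_u(s^*)^{-1}$, $Q_v(s^*)^{-1}$ pushes forward to a measure on $\pi_{q_0,q_1}(\Omega_{\bu^\infty,\bv^\infty})$ of Hausdorff dimension at least $\min\{1,s^*\}$. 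The hypothesis $h>0$ is essential here: for $\bu=0$, $\bv=1$ and for $\bu=01$, $\bv=10$, mentioned in the paragraph preceding the theorem, naive $\bu,\bv$-concatenations escape $\Omega_{\bu^\infty,\bv^\infty}$ and the identity itself fails.
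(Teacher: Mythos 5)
Your algebraic factorizations are correct and attractive: the identity $\pi_{q_0,q_1}(\bu\bv^\infty) - \pi_{q_0,q_1}(\bv\bu^\infty) = \bigl(\pi_{q_0,q_1}(\bu^\infty) - \pi_{q_0,q_1}(\bv^\infty)\bigr)\bigl(1 - Q_u^{-1} - Q_v^{-1}\bigr)$ does show that the relevant root sets differ only by the root $s^*$ of $Q_u(s)^{-1} + Q_v(s)^{-1} = 1$, and your reduction of the whole theorem to the single inequality $d_{q_0,q_1}(\bu^\infty,\bv^\infty) \ge \min\{1,s^*\}$ is a genuinely different (and cleaner) way to organize the upper-bound half than the paper's use of Lemma~\ref{l:phi}. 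But that inequality is exactly where the real work lies, and you have only a plan, not a proof. The plan does not go through as stated: it is simply false in general that $\{\bu,\bv\}^\infty \subseteq \Omega_{\bu^\infty,\bv^\infty}$, even under the hypothesis $h(\Omega_{\bu^\infty,\bv^\infty})>0$, so there is no reason your Bernoulli measure of dimension $\min\{1,s^*\}$ is supported on $\pi_{q_0,q_1}(\Omega_{\bu^\infty,\bv^\infty})$. The coincidence of the fixed points of $T_u$ and $T_v$ at the base $(q_0^{s_1},q_1^{s_1})$ is a metric statement and does not by itself control which symbolic concatenations of $\bu$ and $\bv$ avoid the hole $]\bu^\infty,\bv^\infty[$. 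The paper's Proposition~\ref{p:uv} gets the lower bound by a purely combinatorial route: after reducing (Lemmas~\ref{l:uvtilde} and~\ref{l:notuvprime}) to the case $\bu^\infty,\bv^\infty \in \Omega_{\bu^\infty,\bv^\infty}$ with no decomposition $\bu = \tilde{\bu}\tilde{\bv}$, $\bv = \tilde{\bv}\tilde{\bu}$, it identifies the maximal overlaps $u_{k+1}\cdots u_{k+m} = v_1\cdots v_m$ and $v_{k+1}\cdots v_{k+n} = u_1\cdots u_n$, shows $\{u_1\cdots u_n, v_1\cdots v_m\}^\infty \subseteq \Omega_{\bu^\infty,\bv^\infty}$, and then uses the non-decomposability to prove $n \le |\bv|$ and $m \le |\bu|$, which is precisely what makes the resulting dimension at least $\min\{1,s^*\}$. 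None of these ingredients appears in your sketch, and without them (or a substitute) the lower bound is unproved.

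A secondary but real gap: every application of Theorem~\ref{t:dimH} in your argument requires the pair in question to lie in $W$. Your parenthetical remark about passing to $(\ell_{\ba,\bb},r_{\ba,\bb})$ does not repair this, because after that replacement the dimension is governed by the equation $\pi_{q_0^s,q_1^s}(\ell_{\ba,\bb}) = \pi_{q_0^s,q_1^s}(r_{\ba,\bb})$, to which your algebraic identities (stated for $\bu^\infty,\bb,\bu\bb$, etc.) no longer apply. This matters in particular for the second half of the theorem, which is asserted for arbitrary $\ba,\bb$; the paper's proof of that half avoids the issue entirely by observing that $\Omega_{\bu\bb,\bb}\setminus\Omega_{\bu^\infty,\bb}$ consists only of sequences ending with $\bu\bb$, hence is countable, so the two dimensions trivially agree. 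You should either restrict your projection-equation argument to admissible pairs and supply the reduction, or adopt a set-theoretic argument of this kind.
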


We prove Theorems~\ref{t:t1} and~\ref{t:dimH} in Section~\ref{sec:calc-hausd-dimens}, Theorems~\ref{t:t2a}--\ref{t:equaldim} in Section~\ref{sec:hausd-dimens-cu_q_0}.

\section{Calculating the Hausdorff dimension of $\pi_{q_0,q_1}(\Omega_{\ba,\bb})$} \label{sec:calc-hausd-dimens}
We first give a description of the Hausdorff dimension of $\pi_{q_0,q_1}(\Omega_{\ba,\bb})$ by using the language of~$\Omega_{\ba,\bb}$, similarly to the entropy. 
This will play an important role in the proof of Theorem~\ref{t:dimH}.
Let the cylinder (in~$\Omega_{\ba,\bb}$) of a word $i_1\cdots i_n \in \{0,1\}^*$ be
\[
[i_1\cdots i_n]_{\ba,\bb} := \Omega_{\ba,\bb} \cap i_1 \cdots i_n \{0,1\}^\infty,
\]
and set
\begin{equation} \label{e:Labn}
L_{\ba,\bb,n} := \{i_1\cdots i_n : i_1i_2 \cdots \in \Omega_{\ba,\bb}\} = \{i_1\cdots i_n \in \{0,1\}^n : [i_1\cdots i_n]_{\ba,\bb} \ne \emptyset\}.\hspace{-.5em}
\end{equation}

\begin{lemma} \label{l:Barreira}
Let $\ba \in 0\{0,1\}^\infty$, $\bb \in 1\{0,1\}^\infty$, $q_0,q_1>1$, with
\begin{equation} \label{e:maxmin}
\max \pi_{q_0,q_1}\big([0]_{\ba,\bb}\big) < \min \pi_{q_0,q_1}\big([1]_{\ba,\bb}\big).
\end{equation}
Then $d_{q_0,q_1}(\ba,\bb) = s$, where $s$ is the unique root of the equation
\begin{equation} \label{e:hausdoffD2}
\lim_{n\to\infty} \frac{1}{n} \log \sum_{i_1\cdots i_n\in L_{\ba,\bb,n}} \frac{1}{q_{i_1}^s\cdots q_{i_n}^s} = 0.
\end{equation}
In particular, $d_{q_0,q_1}(\ba,\bb) = 0$ if and only if $h(\Omega_{\ba,\bb}) = 0$. 
\end{lemma}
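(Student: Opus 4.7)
I would prove the lemma by a Bowen-type pressure argument, using (\ref{e:maxmin}) to translate the symbolic pressure on $\Omega_{\ba,\bb}$ into the metric dimension of $\pi_{q_0,q_1}(\Omega_{\ba,\bb})$. Set $\delta := \min \pi_{q_0,q_1}([1]_{\ba,\bb}) - \max \pi_{q_0,q_1}([0]_{\ba,\bb}) > 0$ and $Z_n(s) := \sum_{i_1\cdots i_n \in L_{\ba,\bb,n}} (q_{i_1}\cdots q_{i_n})^{-s}$. Shift-invariance of $\Omega_{\ba,\bb}$ gives $L_{\ba,\bb,n+m} \subseteq L_{\ba,\bb,n}\cdot L_{\ba,\bb,m}$, hence $Z_{n+m}(s) \le Z_n(s) Z_m(s)$, so by Fekete's lemma $P(s) := \lim_n \tfrac{1}{n}\log Z_n(s) = \inf_n \tfrac{1}{n}\log Z_n(s)$ exists; since $q_0, q_1 > 1$, $P$ is continuous and strictly decreasing in $s$, with $P(0) = h(\Omega_{\ba,\bb})$ and $P(s) \to -\infty$, so (\ref{e:hausdoffD2}) has a unique root~$s$. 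If $h(\Omega_{\ba,\bb}) = 0$ then $P(s) < 0$ for every $s > 0$, forcing this root to be~$0$; this gives the ``in particular'' clause.

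For the upper bound $d_{q_0,q_1}(\ba,\bb) \le s$, I would cover $\pi_{q_0,q_1}(\Omega_{\ba,\bb})$ by the projections of the level-$n$ cylinders, noting that each $\pi_{q_0,q_1}([i_1\cdots i_n]_{\ba,\bb})$ is contained in $f_{i_1}\circ\cdots\circ f_{i_n}([0,\tfrac{1}{q_1-1}])$, an interval of length $\frac{1}{(q_1-1)\,q_{i_1}\cdots q_{i_n}}$. For any $s' > s$ the $s'$-mass of this cover is at most $(q_1-1)^{-s'} Z_n(s')$, which tends to $0$ exponentially since $P(s') < 0$, while the mesh of the cover simultaneously goes to $0$. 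Hence $\mathcal{H}^{s'}(\pi_{q_0,q_1}(\Omega_{\ba,\bb})) = 0$ for every $s' > s$, giving the desired upper bound.

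For the lower bound I would first promote (\ref{e:maxmin}) to the quantitative separation: whenever $\bu, \bv \in \Omega_{\ba,\bb}$ first differ at position $n+1$, we have $|\pi_{q_0,q_1}(\bu) - \pi_{q_0,q_1}(\bv)| \ge \delta/(q_{u_1}\cdots q_{u_n})$, which follows from shift-invariance (the two tails land in $[0]_{\ba,\bb}$ and $[1]_{\ba,\bb}$) and the contraction ratio $1/(q_{u_1}\cdots q_{u_n})$ of $f_{u_1}\circ\cdots\circ f_{u_n}$. Since $\Omega_{\ba,\bb}$ is a compact shift and $-s\log q_i$ is a continuous potential, the variational principle supplies an ergodic invariant measure $\mu$ on $\Omega_{\ba,\bb}$ with $h(\mu) = s\,\chi(\mu)$, where $\chi(\mu) := \int \log q_{i_1}\,d\mu > 0$. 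The Shannon--McMillan--Breiman theorem and the Birkhoff ergodic theorem then give $\mu([u_1\cdots u_n]_{\ba,\bb}) = e^{-n h(\mu)+o(n)}$ and $q_{u_1}\cdots q_{u_n} = e^{n\chi(\mu)+o(n)}$ for $\mu$-a.e.~$\bu$. Combining this with the quantitative separation, the preimage under $\pi_{q_0,q_1}$ of a ball $B(\pi_{q_0,q_1}(\bu), r)$ lies in a level-$m$ cylinder $[u_1\cdots u_m]_{\ba,\bb}$ with $q_{u_1}\cdots q_{u_m} > \delta/r$, so the push-forward $\nu := (\pi_{q_0,q_1})_*\mu$ has local dimension $h(\mu)/\chi(\mu) = s$ at $\nu$-a.e.~point, and the mass-distribution principle yields $d_{q_0,q_1}(\ba,\bb) \ge s$.

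The main technical obstacle is obtaining the ergodic equilibrium state $\mu$ satisfying $h(\mu) = s\,\chi(\mu)$ on an arbitrary compact shift without assuming specification or the finite-type property; I would invoke the variational principle for the continuous potential $-s\log q_i$ (which only needs compactness and expansiveness of the shift), combined with ergodic decomposition to pass from any maximizer to an ergodic one with the same entropy/Lyapunov balance. The separation hypothesis (\ref{e:maxmin}) is used solely to convert the symbolic Lyapunov exponent $\chi(\mu)$ into a true metric expansion rate, which is exactly what is needed for the local-dimension formula; everything else is straightforward bookkeeping with the partition function $Z_n$.
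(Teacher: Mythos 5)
Your argument is correct, but it takes a genuinely different route from the paper. The paper packages the whole lemma as an application of Barreira's theorem on generalized Moran constructions \cite{Barreira1996}: it builds slightly enlarged intervals $\Delta_{\ba,\bb,i_1\cdots i_n}$ (padding each cylinder image by $C/(q_{i_1}\cdots q_{i_{n-1}})$ with $C$ equal to your $\delta$), verifies the two Moran hypotheses --- diameters comparable to $(q_{i_1}\cdots q_{i_n})^{-1}$ from above and below, and disjoint interiors at each level, the latter being exactly where \eqref{e:maxmin} enters --- and then reads off \eqref{e:hausdoffD2} directly from Barreira's Theorem~2.1. You instead reprove the dimension formula by hand with the classical additive thermodynamic formalism: the covering/partition-function upper bound, and for the lower bound an ergodic equilibrium state for the locally constant potential $-s\log q_{i_1}$ (which exists on any subshift by upper semicontinuity of entropy, and can be taken ergodic by ergodic decomposition since the defect $h(\mu)-s\chi(\mu)$ is nonpositive and integrates to zero), combined with Shannon--McMillan--Breiman, Birkhoff, your quantitative separation estimate $|\pi_{q_0,q_1}(\bu)-\pi_{q_0,q_1}(\bv)|\ge \delta/(q_{u_1}\cdots q_{u_n})$ (the precise role that \eqref{e:maxmin} plays in the paper's disjointness check), and the mass distribution principle. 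This works because the potential depends on a single coordinate, so the non-additive machinery of \cite{Barreira1996} is not really needed; what your approach buys is a self-contained proof from standard ergodic theory, at the cost of being longer and of implicitly using the identification of $\lim_n\frac1n\log Z_n(s)$ with the topological pressure on an arbitrary (not necessarily sofic or transitive) subshift, which you should state explicitly but which is indeed standard. Both proofs establish the ``in particular'' clause the same way, via the covering upper bound, which uses neither \eqref{e:maxmin} nor the lower-bound machinery.
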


\begin{proof}
Assume that $\ba,\bb,q_0,q_1$ satisfy \eqref{e:maxmin}.
For $i_1 \cdots i_n \in L_{\ba,\bb,n}$, $n \ge 1$, let
\[
\Delta_{\ba,\bb,i_1\cdots i_n} := \bigg[\min \pi_{q_0,q_1}\big([i_1\cdots i_n]_{\ba,\bb}\big),\, \max \pi_{q_0,q_1}\big([i_1\cdots i_n]_{\ba,\bb}\big) + \frac{C}{q_{i_1}\cdots q_{i_{n-1}}}\bigg],
\]
with
\[
C := \min \pi_{q_0,q_1}\big([1]_{\ba,\bb}\big) - \max  \pi_{q_0,q_1}\big([0]_{\ba,\bb}\big).
\]
Then 
\[
\pi_{q_0,q_1}(\Omega_{\ba,\bb}) = \bigcap_{n=1}^\infty \bigcup_{i_1\cdots i_n \in L_{\ba,\bb,n}} \Delta_{\ba,\bb,i_1\cdots i_n}
\]
is a \emph{generalised Moran construction} in the sense of \cite[Section~2.1]{Barreira1996}.
Indeed, we~have
\[
\frac{\min\{q_0,q_1\}\,C}{q_{i_1}\cdots q_{i_n}} \le \mathrm{diam}(\Delta_{\ba,\bb,i_1\cdots i_n}) \le \frac{1/(q_1{-}1){+}\max\{q_0,q_1\}\,C}{q_{i_1}\cdots q_{i_n}}
\]
for all $i_1\cdots i_n \in L_{\ba,\bb,n}$.  The interiors of $\Delta_{\ba,\bb,i_1\cdots i_n}$ and $\Delta_{\ba,\bb,j_1\cdots j_n}$ are disjoint for all distinct $i_1 \cdots i_n, j_1 \cdots j_n \in L_{\ba,\bb,n}$, $n \ge 1$, because we can assume w.l.o.g.\ that $i_1 \cdots i_k = j_1 \cdots j_k$, $i_{k+1} = 0$, $j_{k+1} = 1$ for some $0 \le k < n$, thus
\[
\begin{aligned}
\max \pi_{q_0,q_1}\big([i_1\cdots i_n]_{\ba,\bb}\big) {+} \frac{C}{q_{i_1}\cdots q_{i_{n-1}}} & \le \max \pi_{q_0,q_1}\big([i_1\cdots i_k0]_{\ba,\bb}\big) {+} \frac{C}{q_{i_1}\cdots q_{i_k}} \\
& \hspace{-7em} \le \min \pi_{q_0,q_1}\big([i_1\cdots i_k1]_{\ba,\bb}\big) \le \min \pi_{q_0,q_1}\big([j_1\cdots j_n]_{\ba,\bb}\big).
\end{aligned}
\]
Since $\log (q_{i_1} \cdots q_{i_n})^{-1}$ is an additive function in~$n$, $(q_{i_1} \cdots q_{i_n})^{-1} \le (\min\{q_0,q_1\})^{-n}$ and $(q_{i_1} \cdots q_{i_{n+1}})^{-1} \ge (\max\{q_0,q_1\})^{-1} (q_{i_1} \cdots q_{i_n})^{-1}$ for all sequences $i_1\cdots i_{n+1} \in \{0,1\}^{n+1}$, it follows from \cite[Theorem~2.1]{Barreira1996} that $d_{q_0,q_1}(\ba,\bb)$ equals the unique root~$s$ of the equation~\eqref{e:hausdoffD2}.
Since $h(\Omega_{\ba,\bb}) = \lim_{n\to\infty} \frac{1}{n} \log \# L_{\ba,\bb,n}$, we have $h(\Omega_{\ba,\bb}) = 0$ if and only if \eqref{e:hausdoffD2} holds for $s = 0$.
\end{proof}

Next, we relate equation \eqref{e:hausdoffD2} to
\begin{equation} \label{e:abs}
\pi_{q_0^s,q_1^s}(\ba) = \pi_{q_0^s,q_1^s}(\bb).
\end{equation}

\begin{lemma} \label{l:K}
Let $(\ba,\bb) \in W$, $q_0,q_1>1$.
If \eqref{e:hausdoffD2} holds for $s > 0$, then $s$ is the maximal root of \eqref{e:abs}.
We have $h(\Omega_{\ba,\bb}) = 0$, i.e., \eqref{e:hausdoffD2} holds for $s = 0$, if and only if \eqref{e:abs} has no root $s > 0$.
\end{lemma}

For the proof, we use the sets
\[
Q_{\ba,\bb,n} := \big\{i_1\cdots i_n \in L_{\ba,\bb,n} \,:\, i_1\cdots i_n\mathbf{a},\, i_1\cdots i_n\mathbf{b} \in \Omega_{\ba, \bb}\big\}
\]
(with $Q_{\ba,\bb,0} = L_{\ba,\bb,0}$ being the set containing only the empty word), in addition to the sets $L_{\ba,\bb,n}$ from~\eqref{e:Labn}.
We recall the following decomposition of words from~$L_{\ba,\bb,n}$ into words from~$Q_{\ba,\bb,k}$ and prefixes of~$\ba$ and~$\bb$ respectively that was already used e.g.\ in \cite[Proof of Lemma~3]{BarSteVin2014}.

\begin{lemma} \label{l:decomposition}
Let $(\ba,\bb) = (a_1a_2\cdots, b_1b_2\cdots) \in W$.
Then we have the following.
\begin{enumerate}[\upshape (i)]
\item \label{i:Qa}
For each $i_1 \cdots i_n \in L_{\ba,\bb,n} \setminus \{1^n\} $, $n \ge 1$, there is a unique $0 \le k < n$ such that
\begin{equation}
\label{e:decomposition1}
i_1 \cdots i_k \in Q_{\ba,\bb,k} \quad \mbox{and} \quad  i_{k+1} \cdots i_n = a_1 \cdots a_{n-k}.
\end{equation}
\item \label{i:Qb}
For each $i_1 \cdots i_n \in L_{\ba,\bb,n} \setminus \{0^n\} $, $n \ge 1$, there is a unique $0 \le k < n$ such that
\[
i_1 \cdots i_k \in Q_{\ba,\bb,k} \quad \mbox{and} \quad  i_{k+1} \cdots i_n = b_1 \cdots b_{n-k}.
\]
\end{enumerate}
\end{lemma}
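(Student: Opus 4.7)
The plan is to take $k$ to be the smallest index $k^* \in \{0,\ldots,n{-}1\}$ such that $i_{k^*+1}\cdots i_n$ is a prefix of $\ba$; call such indices \emph{good}. To see that a good index exists when $i_1\cdots i_n \ne 1^n$, let $j^*$ be the largest index with $i_{j^*}=0$; then any extension $i_1i_2\cdots \in \Omega_{\ba,\bb}$ satisfies $i_{j^*}i_{j^*+1}\cdots \preceq \ba$, and comparing $01^{n-j^*}$ with $\ba$ position by position forces $a_2=\cdots=a_{n-j^*+1}=1$, so $j^*{-}1$ is good and $k^*$ is well defined.

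The heart of the argument is to verify $i_1\cdots i_{k^*} \in Q_{\ba,\bb,k^*}$, i.e.\ that both $i_1\cdots i_{k^*}\ba$ and $i_1\cdots i_{k^*}\bb$ lie in $\Omega_{\ba,\bb}$. Suffixes starting inside the appended $\ba$ or $\bb$ are tails of $\ba$ or $\bb$, which are in $\Omega_{\ba,\bb}$ because $(\ba,\bb)\in W$. For a suffix $i_j\cdots i_{k^*}\ba$ or $i_j\cdots i_{k^*}\bb$ with $1\le j\le k^*$, set $t:=k^*{-}j{+}1$ and split on whether $i_j=0$ (so $i_ji_{j+1}\cdots \preceq \ba$, with equality excluded by minimality of $k^*$) or $i_j=1$ (so $i_ji_{j+1}\cdots \succeq \bb$). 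Using the good-$k^*$ identity $i_{k^*+1}\cdots i_n = a_1\cdots a_{n-k^*}$, either the first mismatch of $i_ji_{j+1}\cdots$ with $\ba$ or with $\bb$ already appears inside $i_j\cdots i_{k^*}$, giving $\prec\ba$ or $\succ\bb$ outright, or it appears strictly later and forces a short periodicity $a_l = a_{l+t}$ on an initial segment of $\ba$.

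The subtle point, and the main obstacle I expect, is the suffix $i_j\cdots i_{k^*}\bb$ with $i_j=0$: at position $t{+}1$ one meets $b_1=1$ against $a_{t+1}$, and the induced periodicity can force $a_{t+1}=0$, which would place the suffix in $]\ba,\bb[$. The escape is that the very same periodicity data force the shift $a_{t+1}a_{t+2}\cdots$ of $\ba$ to start with $0$ and yet strictly exceed $\ba$ at a later position, so this shift itself would lie in $]\ba,\bb[$, contradicting $\ba\in\Omega_{\ba,\bb}$. Hence the bad configuration cannot arise; in the surviving edge case where the first mismatch occurs at $p=t{+}1$ one has $a_{t+1}=1$, and the comparison reduces to $\bb\preceq a_{t+1}a_{t+2}\cdots$, which holds because any shift of $\ba$ starting with $1$ is in $\Omega_{\ba,\bb}$ and therefore $\succeq\bb$.

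For uniqueness, if $k>k^*$ were also good, substituting the two good identities yields the periodicity $a_l=a_{l+(k-k^*)}$ on $l\in[1,n-k]$; in particular $a_{k-k^*+1}=0$. Then the suffix $a_1\cdots a_{k-k^*}\bb$ of $i_1\cdots i_k\bb$ starts with $0$ yet has $b_1=1>0=a_{k-k^*+1}$ at position $k-k^*{+}1$, so it lies in $]\ba,\bb[$ and $i_1\cdots i_k\notin Q_{\ba,\bb,k}$. Part~(ii) follows by the symmetric argument with $0/1$ and $\ba/\bb$ exchanged, applied to words $\ne 0^n$ with the smallest $k$ such that $i_{k+1}\cdots i_n$ is a prefix of $\bb$.
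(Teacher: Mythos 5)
Your proposal is correct and follows essentially the same route as the paper: take the minimal good index $k$, establish existence via the trailing $01^{n-j^*}$ block, verify $Q_{\ba,\bb,k}$-membership by the suffix case analysis on $i_j=0$ versus $i_j=1$, and rule out larger good indices by exhibiting a suffix of $i_1\cdots i_k\bb$ inside $]\ba,\bb[$. The only (immaterial) difference is that you dispatch the delicate sub-case $a_{t+1}=0$ by contradicting $\ba\in\Omega_{\ba,\bb}$ through the shift $a_{t+1}a_{t+2}\cdots$, whereas the paper contradicts the minimality of $k$ directly.
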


\begin{proof}
It suffices to prove (\ref{i:Qa}) because (\ref{i:Qb}) is symmetric. 

Let $i_1 \cdots i_n \in L_{\ba,\bb,n}$. 
First note that $i_{k+1} \cdots i_n = a_1 \cdots a_{n-k}$ for some $0 \le k < n$ if and only if $i_1 \cdots i_n \ne 1^n$ because in all other cases there exists $k < n$ such that $i_{k+1} \cdots i_n = 01^{n-k-1}$, then $i_1 \cdots i_n \in L_{\ba,\bb,n}$ implies that $01^{n-k-1} \in L_{\ba,\bb,n-k}$ and thus $01^{n-k-1}  = a_1 \cdots a_{n-k}$.

Next we show that $i_1 \cdots i_k \in Q_{\ba,\bb,k}$ for the minimal $k < n$ such that $i_{k+1} \cdots i_n = a_1 \cdots a_{n-k}$.
Since $\ba, \bb \in \Omega_{\ba,\bb}$, we only have to prove for all $j < k$ that $i_{j+1} \cdots i_k \bb \preceq \ba$ when $i_{j+1} = 0$ and $i_{j+1} \cdots i_k \ba \succeq \bb$ when $i_{j+1} = 1$. 
Assume first that $i_{j+1} = 0$. 
Then $i_1 \cdots i_n \in L_{\ba,\bb,n}$ implies that $i_{j+1}\cdots i_n \preceq a_1 \cdots a_{n-j}$ (where the lexicographic order is defined for finite words of same length in the same way as for infinite words). 
If $i_{j+1} \cdots i_k \prec a_1 \cdots a_{k-j}$, then $i_{j+1} \cdots i_k \bb \prec \ba$.
If $i_{j+1} \cdots i_k = a_1 \cdots a_{k-j}$, then $a_{k-j+1} \cdots a_{n-j} \succeq i_{k+1} \cdots i_n = a_1 \cdots a_{n-k}$, thus $a_{k-j+1} = 0$ would imply that $a_{k-j+1} \cdots a_{n-j} = a_1 \cdots a_{n-k}$ and hence $i_{j+1} \cdots i_n = a_1 \cdots a_{n-j}$, contradicting that $k$ is minimal, while $a_{k-j+1} = 1$ implies that $\ba \succeq a_1 \cdots a_{k-j} \bb =  i_{j+1} \cdots i_k \bb$.
Assume now that $i_{j+1} {\,=\,} 1$.
Then $i_{j+1} \cdots i_k {\,\succeq\,} b_1 \cdots b_{k-j}$, thus $i_{j+1} \cdots i_k \ba \succ \bb$, or $i_{j+1} \cdots i_k = b_1 \cdots b_{k-j}$, thus $b_{k-j+1} \cdots b_{n-j} \preceq i_{k+1} \cdots i_n = a_1 \cdots a_{n-k}$, and $\bb \preceq b_1 \cdots b_{k-j} \ba = i_{j+1} \cdots i_k \ba$. 
This proves that $i_1 \cdots i_k \in Q_{\ba,\bb,k}$.

Consider now $k < j < n$ such that $i_{j+1} \cdots i_n = a_1 \cdots a_{n-j}$. 
Then $i_{k+1} \cdots i_j \bb = a_1 \cdots a_{j-k} \bb \succ a_1 \cdots a_{j-k} \ba \succeq \ba$, thus $i_{k+1} \cdots i_j \bb \in (\ba,\bb)$ and hence $i_1 \cdots i_j \notin Q_{\ba,\bb,j}$.
This proves that there is a unique $k < n$ satisfying~\eqref{e:decomposition1}.
\end{proof}

In terms of the formal power series
\[
L_{\ba,\bb}(z_0,z_1) := \sum_{n=1}^\infty \sum_{i_1\cdots i_n \in L_{\ba,\bb,n}} \hspace{-1em} z_{i_1} \cdots z_{i_ n}, \quad Q_{\ba,\bb}(z_0,z_1) := \sum_{n=0}^\infty \sum_{i_1\cdots i_n \in Q_{\ba,\bb,n}} \hspace{-1em} z_{i_1} \cdots z_{i_n},
\]
Lemma~\ref{l:decomposition}~(\ref{i:Qa}) means that
\[
\sum_{i_1\cdots i_n \in L_{\ba,\bb,n}} z_{i_1} \cdots z_{i_ n} = z_1^n + \sum_{k=0}^{n-1} \sum_{i_1\cdots i_k \in Q_{\ba,\bb,k}} z_{i_1} \cdots z_{i_k}\, z_{a_1} \cdots z_{a_{n-k}}
\]
for all $n \ge 1$.
Setting $A_{\ba,\bb}(z_0,z_1) := \sum_{n=1}^\infty z_{a_1}\cdots z_{a_n}$, we obtain that
\begin{equation} \label{e:LAQ}
L_{\ba,\bb}(z_0,z_1) = \frac{z_1}{1{-}z_1} + A_{\ba,\bb}(z_0,z_1) Q_{\ba,\bb}(z_0,z_1).
\end{equation}
The \emph{kneading invariant}
\begin{equation*} \label{e:K}
K_{\ba,\bb}(z_0,z_1) := \sum_{n=1}^\infty \big(b_n z_{b_1} \cdots z_{b_n} -  a_n z_{a_1} \cdots z_{a_n}\big)
\end{equation*}
is important for us because
\[
K_{\ba,\bb}(q_0^{-1}\!,q_1^{-1}) = \pi_{q_0,q_1}(\bb) - \pi_{q_0,q_1}(\ba)
\]
for all $q_0,q_1 > 1$.
Using Lemma~\ref{l:decomposition}~(\ref{i:Qa}) and~(\ref{i:Qb}), we obtain that
\[
\begin{aligned}
\sum_{i_1\cdots i_n \in L_{\ba,\bb,n}} i_n z_{i_1} \cdots z_{i_ n} & = z_1^n + \sum_{k=0}^{n-1} \sum_{i_1\cdots i_k \in Q_{\ba,\bb,k}} z_{i_1} \cdots z_{i_k}\, a_{n-k} z_{a_1} \cdots z_{a_{n-k}}, \\
\sum_{i_1\cdots i_n \in L_{\ba,\bb,n}} i_n z_{i_1} \cdots z_{i_ n} & = \sum_{k=0}^{n-1} \sum_{i_1\cdots i_k \in Q_{\ba,\bb,k}} z_{i_1} \cdots z_{i_k}\, b_{n-k} z_{b_1} \cdots z_{b_{n-k}},
\end{aligned}
\]
thus
\begin{equation} \label{e:QK}
Q_{\ba,\bb}(z_0,z_1) K_{\ba,\bb}(z_0,z_1) = \frac{z_1}{1-z_1}.
\end{equation}

\begin{proof}[Proof of Lemma~\ref{l:K}]
Let $s \ge 0$ be the unique root of \eqref{e:hausdoffD2}.
For every $t > s$, we have $0 < L_{\ba,\bb}(q_0^{-t},q_1^{-t}) < \infty$, and the equations \eqref{e:LAQ} and~\eqref{e:QK} hold for $(z_0,z_1) = (q_0^{-t},q_1^{-t})$ because the series converge absolutely, thus $K_{\ba,\bb}(q_0^{-t},q_1^{-t}) > 0$.
In particular, $s = 0$ implies that \eqref{e:abs} has no root $s > 0$.
Since the subadditivity of $\log \sum_{i_1\cdots i_n\in L_{\ba,\bb,n}} q_{i_1}^{-s} \cdots q_{i_n}^{-s}$ implies that 
\[
\inf_{n\ge1} \frac{1}{n} \log \sum_{i_1\cdots i_n\in L_{\ba,\bb,n}} \frac{1}{q_{i_1}^s\cdots q_{i_n}^s} = \lim_{n\to\infty} \frac{1}{n} \log \sum_{i_1\cdots i_n\in L_{\ba,\bb,n}} \frac{1}{q_{i_1}^s\cdots q_{i_n}^s} = 0,
\]
we have $\sum_{i_1\cdots i_n\in L_{\ba,\bb,n}} q_{i_1}^{-s} \cdots q_{i_n}^{-s} \ge 1$ for all $n \ge 1$, thus $\lim_{t\downarrow s} L_{\ba,\bb}(q_0^{-t},q_1^{-t}) = \infty$.
When $s > 0$, then \eqref{e:LAQ} and~\eqref{e:QK} give that $\lim_{t\downarrow s} K_{\ba,\bb}(q_0^{-t},q_1^{-t}) = 0$ and, by continuity, $K_{\ba,\bb}(q_0^{-s},q_1^{-s}) = 0$, thus $s$ is the maximal root of~\eqref{e:abs}.

Assume now that \eqref{e:abs} has a root $s > 0$, and assume that $s$ is maximal. 
Since $K_{\ba,\bb}(q_0^{-t},q_1^{-t}) > 0$ for large~$t$, we obtain similarly to the previous paragraph that $s$ is a root of~\eqref{e:hausdoffD2}. Then \eqref{e:hausdoffD2} does not hold for $s = 0$, i.e., $h(\Omega_{\ba,\bb}) > 0$.
\end{proof}

Next, we consider the case that~\eqref{e:maxmin} does not hold.

\begin{lemma} \label{l:full}
Let $\ba \in 0\{0,1\}^\infty$, $\bb \in 1\{0,1\}^\infty$, $q_0, q_1 > 1$.
If $\max \pi_{q_0,q_1}([0]_{\ba,\bb}) \ge \min \pi_{q_0,q_1}([1]_{\ba,\bb})$, then $\pi_{q_0,q_1}(\Omega_{\ba,\bb}) = \big[0,\frac{1}{q_1-1}\big]$. 
\end{lemma}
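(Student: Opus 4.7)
The plan is to extract first the arithmetic and then the topological content of the hypothesis. Since every element of $[0]_{\ba,\bb}$ begins with $0$, we have the trivial bound $\max\pi_{q_0,q_1}([0]_{\ba,\bb}) \le \pi_{q_0,q_1}(01^\infty) = \frac{1}{q_0(q_1-1)}$; symmetrically, $\min\pi_{q_0,q_1}([1]_{\ba,\bb}) \ge \pi_{q_0,q_1}(10^\infty) = \frac{1}{q_1}$. The hypothesis therefore forces $\frac{1}{q_0(q_1-1)} \ge \frac{1}{q_1}$, equivalently $\frac{1}{q_0}+\frac{1}{q_1}\ge 1$. Under this inequality the images $f_0([0,\tfrac{1}{q_1-1}]) = [0,\tfrac{1}{q_0(q_1-1)}]$ and $f_1([0,\tfrac{1}{q_1-1}]) = [\tfrac{1}{q_1},\tfrac{1}{q_1-1}]$ already cover $[0,\tfrac{1}{q_1-1}]$, so the IFS attractor equals the full interval: $J_{q_0,q_1} = \pi_{q_0,q_1}(\{0,1\}^\infty) = [0,\tfrac{1}{q_1-1}]$.

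To upgrade this to $\pi_{q_0,q_1}(\Omega_{\ba,\bb}) = [0,\tfrac{1}{q_1-1}]$, I would argue that the compact set $K := \pi_{q_0,q_1}(\Omega_{\ba,\bb})$, which contains $0 = \pi_{q_0,q_1}(0^\infty)$ and $\tfrac{1}{q_1-1} = \pi_{q_0,q_1}(1^\infty)$, is connected. The natural decomposition is $K = I_0 \cup I_1$, where $I_i := \pi_{q_0,q_1}([i]_{\ba,\bb})$, together with the self-similar identity $I_i = f_i(K_i)$ for $K_i := \pi_{q_0,q_1}(\sigma([i]_{\ba,\bb}))$. Since $I_0 \subseteq [0,\tfrac{1}{q_0(q_1-1)}]$ contains $0$ and $I_1 \subseteq [\tfrac{1}{q_1},\tfrac{1}{q_1-1}]$ contains $\tfrac{1}{q_1-1}$, once each $I_i$ is known to be an interval the overlap $\max I_0 \ge \min I_1$ forces $K = [0,\tfrac{1}{q_1-1}]$.

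The main obstacle is showing that each $I_i$ is itself an interval. A direct recursion on $\ba,\bb$ is awkward because $K_i$ is the projection of a \emph{restricted} subshift (e.g.\ $\{\bu \in \Omega_{\ba,\bb} : \bu \preceq \sigma(\ba)\}$ for $i=0$), not of a subshift of the same form as the original $\Omega_{\ba,\bb}$. I would therefore proceed by direct construction instead: given $y \in [0,\tfrac{1}{q_1-1}]$, the hypothesis guarantees that $y \le \max I_0$ or $y \ge \min I_1$, so one chooses $i_1 = 0$ or $i_1 = 1$ accordingly, passes to the residual target $f_{i_1}^{-1}(y)$, and iterates. The technical heart of the argument is verifying that an analogous overlap condition survives this recursion, so that a compatible digit always exists at every step and the constructed sequence truly lies in $\Omega_{\ba,\bb}$. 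I expect this to require careful bookkeeping with the extremal sequences $\ell_{\ba,\bb}$ and $r_{\ba,\bb}$ and their shifts, exploiting the fact that the overlap hypothesis is inherited by the relevant sub-cylinders.
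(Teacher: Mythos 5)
Your first paragraph is a red herring: the inequality $q_0^{-1}+q_1^{-1}\ge 1$ is a necessary consequence of the hypothesis but nowhere near sufficient (it says nothing about the subshift $\Omega_{\ba,\bb}$, which could be very thin), and the fact that the unrestricted attractor $J_{q_0,q_1}$ is the full interval does not help. Your second paragraph is abandoned by your own admission. What remains is the third paragraph, which correctly identifies the strategy the paper actually uses --- generate an itinerary for $y$ by thresholding at $\max \pi_{q_0,q_1}([0]_{\ba,\bb})$ and iterating $T_i=f_i^{-1}$ --- but then declares the ``technical heart'', namely that the resulting sequence lies in $\Omega_{\ba,\bb}$, to be expected bookkeeping and does not carry it out. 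That verification is the entire content of the lemma, it is where the hypothesis $\max \pi_{q_0,q_1}([0]_{\ba,\bb}) \ge \min \pi_{q_0,q_1}([1]_{\ba,\bb})$ is actually used, and it is not routine; as written, the proposal proves nothing beyond the trivial containment $\pi_{q_0,q_1}(\Omega_{\ba,\bb})\subseteq\big[0,\frac{1}{q_1-1}\big]$.

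Moreover, your guess about how the verification should go (``the overlap hypothesis is inherited by the relevant sub-cylinders'', bookkeeping with $\ell_{\ba,\bb}$ and $r_{\ba,\bb}$) points in a direction different from, and less workable than, the actual argument. The paper fixes once and for all $\ba'\in[0]_{\ba,\bb}$ and $\bb'\in[1]_{\ba,\bb}$ attaining $\max\pi_{q_0,q_1}([0]_{\ba,\bb})$ and $\min\pi_{q_0,q_1}([1]_{\ba,\bb})$, uses the single threshold $\pi_{q_0,q_1}(\ba')$ at every step (no re-verification of any overlap condition along the recursion, and no compatibility issue: a digit is always produced), and proves by induction on $k$ that $i_n\cdots i_{n+k}\preceq a'_1\cdots a'_{k+1}$ whenever $i_n=0$ and $i_n\cdots i_{n+k}\succeq b'_1\cdots b'_{k+1}$ whenever $i_n=1$. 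The two key steps are: (a) if the itinerary agrees with $a'_1\cdots a'_k$ after an occurrence of $0$ and the threshold rule then outputs $1$, monotonicity of $T_0,T_1$ gives $\pi_{q_0,q_1}(a'_{k+1}a'_{k+2}\cdots)>\pi_{q_0,q_1}(\ba')=\max\pi_{q_0,q_1}([0]_{\ba,\bb})$, and since $a'_{k+1}a'_{k+2}\cdots\in\Omega_{\ba,\bb}$ this forces $a'_{k+1}=1$, so there is in fact no deviation; (b) the symmetric step for $\bb'$ needs the hypothesis $\pi_{q_0,q_1}(\ba')\ge\pi_{q_0,q_1}(\bb')$ together with the expansion factor $q_{b'_1}\cdots q_{b'_k}\ge 1$ in a chain of inequalities forcing $b'_{k+1}=0$. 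This yields $i_ni_{n+1}\cdots\in\Omega_{\ba',\bb'}\subseteq\Omega_{\ba,\bb}$. None of this appears in your proposal, so there is a genuine gap exactly at the decisive step.
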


In the proof, we use the maps $T_0$ and~$T_1$ (that depend on $q_0,q_1$), with 
\begin{equation} \label{e:Ti}
T_i:\, \mathbb{R} \to \mathbb{R}, \quad x \mapsto q_i x - i;
\end{equation}
see Figure~\ref{f:T01}.

\begin{figure}[ht]
\centerline{\begin{tikzpicture}[scale=1.5]
\draw(0,0)node[below]{$0$}node[left]{$0$}--(2.247,0)node[below]{$1/(q_1{-}1)$}--(2.247,2.247)--(0,2.247)node[left]{$1/(q_1{-}1)$}--cycle;
\draw[thick](0,0)--node[above,rotate=66,pos=.6]{$T_0(x) = q_0 x$}(1,2.247) (.692,0)--node[above,rotate=55]{$T_1(x) = q_1 x{-}1$}(2.247,2.247);
\end{tikzpicture}}
\caption{The maps $T_0, T_1$ restricted to the interval $[0,\frac{1}{q_1-1}]$, with $q_0,q_1$ as in Example~\ref{ex:1}.} \label{f:T01}
\end{figure}

\begin{proof}
Let $\ba' = a'_1a'_2\cdots \in [0]_{\ba,\bb}$, $\bb' = b'_1b'_2\cdots \in [1]_{\ba,\bb}$ be such that $\pi_{q_0,q_1}(\ba') = \max \pi_{q_0,q_1}([0]_{\ba,\bb}) \ge \min \pi_{q_0,q_1}([1]_{\ba,\bb}) = \pi_{q_0,q_1}(\bb')$.

For $x \in \big[0,\frac{1}{q_1-1}\big]$, define a sequence $i_1i_2\cdots \in \{0,1\}^\infty$ by setting
\[
i_n = \begin{cases}1 & \mbox{if}\ T_{i_{n-1}} \circ \cdots \circ T_{i_1}(x) > \pi_{q_0,q_1}(\ba'), \\ 0 & \mbox{otherwise,} \end{cases}
\]
recursively for $n \ge 1$.
Since $T_0\big([0,\pi_{q_0,q_1}(\ba')]\big) = [0,\pi_{q_0,q_1}(a'_2a'_3\cdots)] \subseteq \big[0,\tfrac{1}{q_1-1}\big]$ and 
\[
T_1\big(\big[\pi_{q_0,q_1}(\ba'),\tfrac{1}{q_1-1}\big]\big) {\,\subseteq\,} T_1\big(\big[\pi_{q_0,q_1}(\bb'),\tfrac{1}{q_1-1}\big]\big) {\,=\,} \big[\pi_{q_0,q_1}(b'_2b'_3\cdots),\tfrac{1}{q_1-1}\big] {\,\subseteq\,} \big[0,\tfrac{1}{q_1-1}\big],
\]
we have $T_{i_n} \circ \cdots \circ T_{i_1}(x) \in \big[0,\frac{1}{q_1-1}\big]$ for all $n \ge 1$, thus $\pi_{q_0,q_1}(i_1i_2\cdots) = x$.

To show that $i_1i_2\cdots \in \Omega_{\ba,\bb}$, we prove, by induction on $k \ge 0$, that $i_n \cdots i_{n+k} \preceq a'_1 \cdots a'_{k+1}$ for all $n \ge 1$ such that $i_n = 0$, and $i_n \cdots i_{n+k} \succeq b'_1 \cdots b'_{k+1}$ for all $n \ge 1$ such that $i_n = 1$.
This is clearly true for $k = 0$.
Assume that $i_n \cdots i_{n+{k-1}} \preceq a'_1 \cdots a'_k$, with $k,n \ge 1$.
If $i_n \cdots i_{n+{k-1}} \prec a'_1 \cdots a'_k$ or $i'_{n+k} = 0$, then we also have $i_n \cdots i_{n+k} \preceq a'_1 \cdots a'_{k+1}$.
If $i_n \cdots i_{n+k-1} = a'_1 \cdots a'_k$ and $i_{n+k} = 1$, then
\[
\begin{aligned}
& \pi_{q_0,q_1}(a'_{k+1}a'_{k+2}\cdots) = T_{a'_k} \circ \cdots \circ T_{a'_1}(\pi_{q_0,q_1}(\ba')) \\
& \quad \ge T_{a'_k} \circ \cdots \circ T_{a'_1} \circ T_{i_{n-1}} \circ \cdots \circ T_{i_1}(x) = T_{i_{n+k-1}} \circ \cdots \circ T_{i_1}(x) > \pi_{q_0,q_1}(\ba'),
\end{aligned}
\]
where we have used that $i_n = 0$, $T_0$ and~$T_1$ are monotonically increasing, and that $i_{n+k} = 1$.
Therefore, we have $a'_{k+1}a'_{k+2}\cdots \notin [0]_{\ba,\bb}$. 
Since $\ba' \in \Omega_{\ba,\bb}$ implies that $a'_{k+1}a'_{k+2}\cdots \in \Omega_{\ba,\bb}$, we obtain that $a'_{k+1} = 1$, hence $i_n \cdots i_{n+k} = a'_1 \cdots a'_{k+1}$.
Assume now $i_n \cdots i_{n+{k-1}} \succeq b'_1 \cdots b'_k$. 
If $i_n \cdots i_{n+{k-1}} \succ b'_1 \cdots b'_k$ or $i_{n+k} = 1$, then we also have $i_n \cdots i_{n+k} \succeq b'_1 \cdots b'_{k+1}$.
If $i_n \cdots i_{n+k-1} = b'_1 \cdots b'_k$ and $i_{n+k} = 0$, then
\[
\begin{aligned}
& \pi_{q_0,q_1}(b'_{k+1}b'_{k+2}\cdots) = T_{b'_k} \circ \cdots \circ T_{b'_1}(\pi_{q_0,q_1}(\bb')) \\
& = T_{i_{n+k-1}} {\circ} \cdots {\circ} T_{i_1}(x) - \big(T_{b'_k} {\circ} \cdots {\circ} T_{b'_1} {\circ} T_{i_{n-1}} {\circ} \cdots {\circ} T_{i_1}(x) {-} T_{b'_k} {\circ} \cdots {\circ} T_{b'_1}(\pi_{q_0,q_1}(\bb'))\big) \\
& = T_{i_{n+k-1}} \circ \cdots \circ T_{i_1}(x) - q_{b'_k} \cdots q_{b'_1} \big(T_{i_{n-1}} \circ \cdots \circ T_{i_1}(x) - \pi_{q_0,q_1}(\bb')\big) \\ 
& < \pi_{q_0,q_1}(\ba') - q_{b'_k} \cdots q_{b'_1} \big(\pi_{q_0,q_1}(\ba') - \pi_{q_0,q_1}(\bb')\big) \\
& \le \pi_{q_0,q_1}(\ba') - \big(\pi_{q_0,q_1}(\ba') - \pi_{q_0,q_1}(\bb')\big) = \pi_{q_0,q_1}(\bb'),
\end{aligned}
\]
where we have used that $i_n = 1$, $i_{n+k} = 1$, and that $\pi_{q_0,q_1}(\ba') \ge \pi_{q_0,q_1}(\bb')$.
This implies that $b'_{k+1} = 0$, hence $i_n \cdots i_{n+k} = b'_1 \cdots b'_{k+1}$.
We have shown that $i_ni_{n+1}\cdots \preceq \ba' \preceq \ba$ for all $n \ge 1$ such that $i_n = 0$ and $i_ni_{n+1}\cdots \succeq \bb' \succeq \bb$ for all $n \ge 1$ such that $i_n = 1$, thus $i_ni_{n+1}\cdots \in \Omega_{\ba',\bb'} \subseteq \Omega_{\ba,\bb}$. 
\end{proof}

We obtain the following characterisation of the Hausdorff dimension of~$\Omega_{\ba,\bb}$. 

\begin{proposition} \label{p:K}
Let $(\ba,\bb) \in W$, $q_0,q_1>1$.
Then $d_{q_0,q_1}(\ba,\bb) = \min\{1,s\}$, where $s$ is the maximal root of the equation $\pi_{q_0^s,q_1^s}(\ba) = \pi_{q_0^s,q_1^s}(\bb)$ if it has a root $s > 0$, $s = 0$ if it has no such root.
We have $d_{q_0,q_1}(\ba,\bb) = 0$ if and only if $h(\Omega_{\ba,\bb}) = 0$, $d_{q_0,q_1}(\ba,\bb) = 1$ if and only if $\pi_{q_0,q_1}(\Omega_{\ba,\bb}) = \big[0,\frac{1}{q_1-1}\big]$.
\end{proposition}

\begin{proof}
  If $\max \pi_{q_0,q_1}([0]_{\ba,\bb}) < \min \pi_{q_0,q_1}([1]_{\ba,\bb})$, then the formula for $d_{q_0,q_1}(\ba,\bb)$ and the equivalence of $d_{q_0,q_1}(\ba,\bb) = 0$ and $h(\Omega_{\ba,\bb}) = 0$ follow from Lemmas~\ref{l:Barreira} and~\ref{l:K}; note that $s < 1$ in this case because $\pi_{q_0,q_1}(\ba) < \pi_{q_0,q_1}(\bb)$ and the Hausdorff dimension cannot exceed~$1$.
We have $\pi_{q_0,q_1}(\Omega_{\ba,\bb}) \ne \big[0,\frac{1}{q_1-1}\big]$ because $x \notin \pi_{q_0,q_1}(\Omega_{\ba,\bb})$ for all $\max \pi_{q_0,q_1}([0]_{\ba,\bb}) < x < \min \pi_{q_0,q_1}([1]_{\ba,\bb})$.

If $\max \pi_{q_0,q_1}([0]_{\ba,\bb}) \ge \min \pi_{q_0,q_1}([1]_{\ba,\bb})$, then $\pi_{q_0,q_1}(\Omega_{\ba,\bb}) = \big[0,\frac{1}{q_1-1}\big]$ and thus $d_{q_0,q_1}(\ba,\bb) = 1$ by Lemma~\ref{l:full}.
Since $\max \pi_{q_0^t,q_1^t}([0]_{\ba,\bb}) < \min \pi_{q_0^t,q_1^t}([1]_{\ba,\bb})$ for sufficiently large~$t$, we have, by continuity, $\max \pi_{q_0^s,q_1^s}([0]_{\ba,\bb}) = \min \pi_{q_0^s,q_1^s}([1]_{\ba,\bb})$ for some $s \ge 1$.
Since $\ba$ is the lexicographically largest element of $[0]_{\ba,\bb}$, this implies that $\pi_{q_0^s,q_1^s}(\ba) = \max \pi_{q_0^s,q_1^s}([0]_{\ba,\bb})$ and, symmetrically, $\pi_{q_0^s,q_1^s}(\bb) = \min \pi_{q_0^s,q_1^s}([1]_{\ba,\bb})$, thus $\pi_{q_0^s,q_1^s}(\ba) = \pi_{q_0^s,q_1^s}(\bb)$.
By Lemma~\ref{l:K}, we have $h(\Omega_{\ba,\bb}) > 0$ in this case.
\end{proof}

We recall (and prove) the following result for the entropy.

\begin{proposition} \label{p:entropy}
Let $(\ba,\bb) \in W$.
Then $h(\Omega_{\ba,\bb}) = \log \beta$, where $\beta$ is the maximal solution of the equation $\pi_{\beta,\beta}(\ba) = \pi_{\beta,\beta}(\bb)$ if a solution $\beta > 1$ exists, $\beta = 1$ if no such solution exists. 
\end{proposition}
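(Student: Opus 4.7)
The plan is to reduce Proposition~\ref{p:entropy} to Proposition~\ref{p:K} by specializing to the diagonal $q_0 = q_1 = q$ and performing a change of variables. The key simplification is that when $q_0 = q_1 = q$, the inner sum in equation~\eqref{e:hausdoffD2} collapses to $|L_{\ba,\bb,n}|\, q^{-sn}$, so \eqref{e:hausdoffD2} becomes equivalent to $h(\Omega_{\ba,\bb}) = s\log q$. Combined with Lemma~\ref{l:Barreira}, this yields $d_{q,q}(\ba,\bb) = h(\Omega_{\ba,\bb})/\log q$ whenever \eqref{e:maxmin} holds.

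To invoke this identity cleanly, I would fix a sufficiently large base $q > 2$. The separation hypothesis \eqref{e:maxmin} is then automatic, because $\ba \in 0\{0,1\}^\infty$ and $\bb \in 1\{0,1\}^\infty$ give $\max \pi_{q,q}([0]_{\ba,\bb}) \le 1/(q(q-1)) < 1/q \le \min \pi_{q,q}([1]_{\ba,\bb})$. Moreover, since $h(\Omega_{\ba,\bb}) \le \log 2$, enlarging $q$ if necessary ensures $h(\Omega_{\ba,\bb})/\log q < 1$, so that the $\min\{1,\cdot\}$ in Proposition~\ref{p:K} becomes inactive. With this $q$ fixed, Proposition~\ref{p:K} states that $d_{q,q}(\ba,\bb) = s$ where $s$ is the maximal positive root of $\pi_{q^s,q^s}(\ba) = \pi_{q^s,q^s}(\bb)$ when such a root exists, and $s = 0$ otherwise.

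The conclusion then follows by the substitution $\beta := q^s$. If a positive root $s$ of $\pi_{q^s,q^s}(\ba) = \pi_{q^s,q^s}(\bb)$ exists, then $h(\Omega_{\ba,\bb})/\log q = s > 0$, and since $s \mapsto q^s$ is an order-preserving bijection $(0,\infty) \to (1,\infty)$, the value $\beta = q^s$ is precisely the maximal root $>1$ of $\pi_{\beta,\beta}(\ba) = \pi_{\beta,\beta}(\bb)$, with $\log\beta = s\log q = h(\Omega_{\ba,\bb})$, as required. If instead no positive root exists, then by the same bijection no root $\beta > 1$ of $\pi_{\beta,\beta}(\ba) = \pi_{\beta,\beta}(\bb)$ exists; Proposition~\ref{p:K} gives $d_{q,q}(\ba,\bb) = 0$, which by the simplified form of \eqref{e:hausdoffD2} translates to $h(\Omega_{\ba,\bb}) = 0 = \log 1$. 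I do not anticipate a serious obstacle: the argument is essentially a one-line change of variables on top of Proposition~\ref{p:K}, and the only care required is in handling the $\min\{1,s\}$ clause (resolved by taking $q$ large) and verifying that the correspondence $s \leftrightarrow q^s$ preserves maximality, which is immediate.
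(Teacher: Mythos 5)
Your reduction is correct, and it is a genuinely different route from the paper's. The paper proves Proposition~\ref{p:entropy} by re-running the generating-function argument of Lemma~\ref{l:K} with $z_0=z_1=q^{-1}$: it uses \eqref{e:LAQ} and \eqref{e:QK} directly to show that $K_{\ba,\bb}(q^{-1},q^{-1})>0$ for $\log q>h(\Omega_{\ba,\bb})$ and that $K_{\ba,\bb}(\beta^{-1},\beta^{-1})=0$ at $\log\beta=h(\Omega_{\ba,\bb})$, so the argument stays entirely at the symbolic/combinatorial level. You instead deduce the entropy formula from the already-established dimension formula of Proposition~\ref{p:K}, via the diagonal identity $d_{q,q}(\ba,\bb)=h(\Omega_{\ba,\bb})/\log q$ (which does follow from Lemma~\ref{l:Barreira}, since for $q_0=q_1=q$ equation \eqref{e:hausdoffD2} reads $h(\Omega_{\ba,\bb})-s\log q=0$), and the order-preserving substitution $\beta=q^s$. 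Your two auxiliary checks are both right: for $q>2$ one has $\max\pi_{q,q}([0]_{\ba,\bb})\le\frac{1}{q(q-1)}<\frac1q\le\min\pi_{q,q}([1]_{\ba,\bb})$, so \eqref{e:maxmin} holds, and $h(\Omega_{\ba,\bb})\le\log2<\log q$ forces $\min\{1,s\}=s$; there is also no circularity, since Proposition~\ref{p:K} is proved independently of Proposition~\ref{p:entropy}. What your approach buys is brevity --- a one-line corollary of Proposition~\ref{p:K} --- at the cost of invoking the geometric machinery (the Moran construction and Barreira's theorem behind Lemma~\ref{l:Barreira}) to establish a purely symbolic statement; the paper's direct proof keeps the entropy result self-contained and independent of any Hausdorff-dimension input, which is cleaner conceptually even if it repeats the mechanics of Lemma~\ref{l:K}.
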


\begin{proof}
This follows from $h(\Omega_{\ba,\bb}) = \lim_{n\to\infty} \frac{1}{n} \log \# L_{\ba,\bb,n}$ and Lemma~\ref{l:K}.
Indeed, if $h(\Omega_{\ba,\bb}) = \log \beta$ with $\beta > 1$, then \eqref{e:hausdoffD2} holds for $q_0 = q_1 = \beta$, $s = 1$, thus $s = 1$ is the maximal root of the equation $\pi_{\beta^s,\beta^s}(\ba) = \pi_{\beta^s,\beta^s}(\bb)$ by Lemma~\ref{l:K}.
If $h(\Omega_{\ba,\bb}) = 0$, then \eqref{e:hausdoffD2} holds for $q_0 = q_1 = 2$, $s = 0$, and $\pi_{2^s,2^s}(\ba) = \pi_{2^s,2^s}(\bb)$ has no solution $s > 0$ by Lemma~\ref{l:K}.
\end{proof}

We have proved Theorem~\ref{t:dimH}.

\begin{proof}[Proof of Theorem~\ref{t:dimH}]
This follows from Propositions~\ref{p:K} and~\ref{p:entropy}.
Indeed, the inequality $\pi_{q,q}(\ba) < \pi_{q,q}(\bb)$ for all $q > 1$ is equivalent to $\pi_{q,q}(\ba) \ne \pi_{q,q}(\bb)$ for all $q > 1$ because $\pi_{q,q}(\ba) < \pi_{q,q}(\bb)$ for sufficiently large~$q$, and, for all $q_0,q_1>1$, the solution of \eqref{e:hausdoffD2} is $s = 0$ if and only if $h(\Omega_{\ba,\bb}) = 0$.
\end{proof}

The proof of Theorem~\ref{t:t1} follows from Theorem~\ref{t:dimH} and the following lemma.

\begin{lemma} \label{l:laba}
Let $\ba,\bb \in \{0,1\}^\infty$ with $\ba \preceq \bb$. 
Then 
\[
\Omega_{\ell(\ba,\bb),r(\ba,\bb)} = \Omega_{\ba,\bb},
\]
in particular $\ell(\ba,\bb), r(\ba,\bb) \in \Omega_{\ell(\ba,\bb),r(\ba,\bb)}$.
\end{lemma}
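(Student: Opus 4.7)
The plan is to prove the two inclusions $\Omega_{\ell_{\ba,\bb},r_{\ba,\bb}} \subseteq \Omega_{\ba,\bb}$ and $\Omega_{\ba,\bb} \subseteq \Omega_{\ell_{\ba,\bb},r_{\ba,\bb}}$ separately, using only the definition of~$\Omega$ together with the extremal characterizations of $\ell_{\ba,\bb}$ and~$r_{\ba,\bb}$. The ``in particular'' statement is then automatic: by definition $\ell_{\ba,\bb},r_{\ba,\bb} \in \Omega_{\ba,\bb}$, hence they also lie in $\Omega_{\ell_{\ba,\bb},r_{\ba,\bb}}$ once equality of the two sets is established.

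For the first inclusion, I observe that by construction $\ell_{\ba,\bb} \preceq \ba$ and $r_{\ba,\bb} \succeq \bb$, so the open interval satisfies ${]\ba,\bb[}\, \subseteq\, {]\ell_{\ba,\bb},r_{\ba,\bb}[}$. Hence any sequence whose shifts avoid the larger interval automatically avoids the smaller one, which is exactly the statement $\Omega_{\ell_{\ba,\bb},r_{\ba,\bb}} \subseteq \Omega_{\ba,\bb}$.

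For the reverse inclusion, let $\bu = u_1u_2\cdots \in \Omega_{\ba,\bb}$ and fix $n \ge 1$; I want to show $u_nu_{n+1}\cdots \notin\, ]\ell_{\ba,\bb},r_{\ba,\bb}[$. Since the defining condition of $\Omega_{\ba,\bb}$ is hereditary under shifts, the tail $\bv := u_nu_{n+1}\cdots$ lies in $\Omega_{\ba,\bb}$ as well (all its shifts are shifts of~$\bu$, and thus avoid ${]\ba,\bb[}$). Now trichotomy with respect to $\ba \preceq \bb$ and the exclusion $\bv \notin\, ]\ba,\bb[$ gives two cases: if $\bv \preceq \ba$, then $\bv$ is a candidate for the maximum defining~$\ell_{\ba,\bb}$, so $\bv \preceq \ell_{\ba,\bb}$; symmetrically, if $\bv \succeq \bb$, then $\bv \succeq r_{\ba,\bb}$ by minimality. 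Either way $\bv \notin\, ]\ell_{\ba,\bb},r_{\ba,\bb}[$, proving $\bu \in \Omega_{\ell_{\ba,\bb},r_{\ba,\bb}}$.

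There is no real obstacle here: the only point requiring care is the shift-invariance of $\Omega_{\ba,\bb}$, which follows directly from the definition since the condition ``$u_ku_{k+1}\cdots \notin\, ]\ba,\bb[$ for all $k \ge 1$'' survives taking tails. (Implicit in the definitions of $\ell_{\ba,\bb}, r_{\ba,\bb}$ is also that the relevant max/min exist; this was already noted in the excerpt from compactness of $\Omega_{\ba,\bb}$ together with $\{0^\infty, 1^\infty\} \subseteq \Omega_{\ba,\bb}$, so I would not re-derive it.)
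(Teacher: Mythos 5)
Your proof is correct and follows essentially the same route as the paper: the first inclusion from ${]\ba,\bb[} \subseteq {]\ell_{\ba,\bb},r_{\ba,\bb}[}$, and the second from shift-invariance of $\Omega_{\ba,\bb}$ plus the extremality of $\ell_{\ba,\bb}$ and $r_{\ba,\bb}$ (the paper merely phrases this step as a proof by contradiction rather than directly). No gaps.
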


\begin{proof}
Since ${]}\ba,\bb{[} \subseteq {]}\ell(\ba,\bb),r(\ba,\bb){[}$, we have $\Omega_{\ell(\ba,\bb),r(\ba,\bb)} \subseteq \Omega_{\ba,\bb}$.
To see that $\Omega_{\ba,\bb} \subseteq \Omega_{\ell(\ba,\bb),r(\ba,\bb)}$, suppose that there exists $i_1i_2\cdots \in \Omega_{\ba,\bb} \setminus \Omega_{\ell(\ba,\bb),r(\ba,\bb)}$.
This would imply that $i_ni_{n+1}\cdots \in {]}\ell(\ba,\bb),\ba] \cup [\bb,r(\ba,\bb){[}$ for some $n \ge 1$ and $i_ni_{n+1}\cdots \in \Omega_{\ba,\bb}$, contradicting the definition of $\ell(\ba,\bb),r(\ba,\bb)$. 
\end{proof}

\begin{proof}[Proof of Theorem~\ref{t:t1}]
If $q_1 \le \cK(q_0)$, then $h(U_{q_0,q_1}) = 0$ and thus $\dim_H \cU_{q_0, q_1} = 0$.

If $q_1 \ge \frac{q_0}{q_0-1}$, then $U_{q_0,q_1} = \{0,1\}^\infty = \Omega_{01^\infty,10^\infty}$ (up to countably many sequences when $q_1 = \frac{q_0}{q_0-1}$), thus $\dim_H \cU_{q_0, q_1} = s$ with $q_0^{-s} + q_1^{-s} = 1$.

Let now $\cK(q_0) < q_1 < \frac{q_0}{q_0-1}$.
Let $\ba' = \ell(\ba_{q_0,q_1},\bb_{q_0,q_1})$ and $\bb' = r(\ba_{q_0,q_1},\bb_{q_0,q_1})$.
Then $\ba', \bb'  \in \Omega_{\ba',\bb'} = \Omega_{\ba_{q_0, q_1},\bb_{q_0, q_1}}$ by Lemma~\ref{l:laba}.
Moreover, we have 
\[
\begin{gathered}
\pi_{q_0,q_1}(\ba') \le \pi_{q_0,q_1}(\ba_{q_0, q_1}) = \tfrac{1}{q_1} < \tfrac{1}{q_0(q_1-1)} = \pi_{q_0,q_1}(\bb_{q_0, q_1}) \le \pi_{q_0,q_1}(\bb'), \\
\pi_{q_0,q_1}(\ba') = \max \pi_{q_0,q_1}([0]_{\ba',\bb'}), \quad and \quad \pi_{q_0,q_1}(\bb') = \min \pi_{q_0,q_1}([1]_{\ba',\bb'}).
\end{gathered}
\]
Since $h(\Omega_{\ba',\bb'}) > 0$, the equation $\pi_{q_0^s,q_1^s}(\ba') = \pi_{q_0^s,q_1^s}(\bb')$ has a maximal positive root $0 < s < 1$.
By Theorem~\ref{t:dimH}, we have $\dim_H(\cU_{q_0, q_1}) = s$.
\end{proof}

\section{Continuity of the Hausdorff dimension of $\pi_{q_0,q_1}(\Omega_{\ba,\bb})$} \label{sec:hausd-dimens-cu_q_0}
To prove that $d_{q_0,q_1}$ is continuous, we show first that we can restrict to~$W$.

\begin{lemma} \label{l:abinOmega}
If $d_{q_0,q_1}$ is continuous at all points $(\ba,\bb) \in W$, then it is continuous.
\end{lemma}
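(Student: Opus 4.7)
The plan is to fix $(\ba, \bb) \in 0\{0,1\}^\infty \times 1\{0,1\}^\infty$ and put $(\ell, r) := (\ell_{\ba, \bb}, r_{\ba, \bb}) \in W$, so that Lemma~\ref{l:laba} yields $\Omega_{\ba, \bb} = \Omega_{\ell, r}$ and hence $d_{q_0, q_1}(\ba, \bb) = d_{q_0, q_1}(\ell, r)$. Given a sequence $(\ba^{(n)}, \bb^{(n)}) \to (\ba, \bb)$, I will establish upper and lower semicontinuity at $(\ba, \bb)$ separately. A short preliminary step is to check that $W$ is closed: if $(\ba^{(n)}, \bb^{(n)}) \in W$ converges to $(\ba, \bb)$, then each condition $\sigma^k \ba^{(n)} \preceq \ba^{(n)}$ or $\succeq \bb^{(n)}$ survives the limit because the relations $\preceq$ and $\succeq$ are closed on $\{0,1\}^\infty \times \{0,1\}^\infty$, giving $\ba, \bb \in \Omega_{\ba, \bb}$.

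For upper semicontinuity, I would set $(\ell^{(n)}, r^{(n)}) := (\ell_{\ba^{(n)}, \bb^{(n)}}, r_{\ba^{(n)}, \bb^{(n)}}) \in W$, so that $d_{q_0, q_1}(\ba^{(n)}, \bb^{(n)}) = d_{q_0, q_1}(\ell^{(n)}, r^{(n)})$ by Lemma~\ref{l:laba}. Using compactness of $\{0,1\}^\infty$, pass to any convergent subsequence $(\ell^{(n_k)}, r^{(n_k)}) \to (\ell^*, r^*)$. Closedness of $W$ gives $(\ell^*, r^*) \in W$. Taking limits of $\ell^{(n_k)} \preceq \ba^{(n_k)}$ and of $\sigma^j \ell^{(n_k)} \notin (\ba^{(n_k)}, \bb^{(n_k)})$ for each $j$ yields $\ell^* \preceq \ba$ and $\ell^* \in \Omega_{\ba, \bb}$, hence $\ell^* \preceq \ell$ by maximality of~$\ell$; symmetrically $r^* \succeq r$. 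Therefore $(\ell^*, r^*) \supseteq (\ell, r)$ as intervals and $\Omega_{\ell^*, r^*} \subseteq \Omega_{\ell, r}$, so $d(\ell^*, r^*) \le d(\ell, r) = d(\ba, \bb)$. The hypothesis gives continuity at $(\ell^*, r^*) \in W$, whence $d(\ba^{(n_k)}, \bb^{(n_k)}) \to d(\ell^*, r^*) \le d(\ba, \bb)$, and $\limsup d(\ba^{(n)}, \bb^{(n)}) \le d(\ba, \bb)$ follows.

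For lower semicontinuity, I will build an auxiliary ``outer'' sequence $(\alpha^{(n)}, \beta^{(n)})$ that converges to the good point $(\ell, r)$ and whose hole always contains the hole of $(\ba^{(n)}, \bb^{(n)})$. Set $\alpha^{(n)} := \ell$ if $\ell \prec \ba$ strictly and $\alpha^{(n)} := \ba^{(n)}$ if $\ell = \ba$; define $\beta^{(n)}$ symmetrically from $r$ versus $\bb$. In each case $(\alpha^{(n)}, \beta^{(n)}) \to (\ell, r)$, because strict lexicographic inequalities persist under limits (so when $\ell \prec \ba$, $\ba^{(n)}$ agrees with $\ba$ on sufficient prefixes to beat $\ell$). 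Moreover, for $n$ large one has $\alpha^{(n)} \preceq \ba^{(n)}$ and $\beta^{(n)} \succeq \bb^{(n)}$, so $\Omega_{\alpha^{(n)}, \beta^{(n)}} \subseteq \Omega_{\ba^{(n)}, \bb^{(n)}}$ and $d(\alpha^{(n)}, \beta^{(n)}) \le d(\ba^{(n)}, \bb^{(n)})$. Applying the continuity hypothesis at $(\ell, r) \in W$ gives $d(\alpha^{(n)}, \beta^{(n)}) \to d(\ell, r) = d(\ba, \bb)$, and hence $\liminf d(\ba^{(n)}, \bb^{(n)}) \ge d(\ba, \bb)$. Combining with the upper bound finishes the proof.

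The main obstacle is that the reduction map $(\ba, \bb) \mapsto (\ell_{\ba, \bb}, r_{\ba, \bb})$ is generally \emph{discontinuous}: when a boundary sequence is only marginally in $\Omega_{\ba, \bb}$, a small perturbation can cause $\ell$ or $r$ to jump, so one cannot simply transport continuity from $W$ through this map. The two-sided argument above circumvents this by using two \emph{different} comparisons with $(\ell, r)$: the upper bound relies only on the maximality/minimality of $\ell$ and $r$, forcing subshift inclusions without needing $(\ell^{(n)}, r^{(n)}) \to (\ell, r)$; the lower bound uses the explicit outer sequence $(\alpha^{(n)}, \beta^{(n)})$, which is engineered to always converge to $(\ell, r)$ regardless of whether $\ba \in \Omega_{\ba, \bb}$ or not.
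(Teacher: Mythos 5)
Your proof is correct, but it takes a genuinely different route from the paper's. The paper argues locally: assuming w.l.o.g.\ $\ba \notin \Omega_{\ba,\bb}$, it uses Lemma~\ref{l:charab} to see that $\ell_{\ba,\bb}$ is a periodic word determined by finitely many \emph{strict} lexicographic inequalities, which persist in a neighbourhood of $(\ba,\bb)$; hence $\Omega_{\ba',\bb'} = \Omega_{\ell_{\ba,\bb},\bb'}$ nearby, and one either gets local constancy of the subshift (when also $\bb \notin \Omega_{\ba,\bb}$) or composes with the continuous projection $(\ba',\bb') \mapsto (\ell_{\ba,\bb},\bb')$ and invokes the hypothesis at the single point $(\ell_{\ba,\bb},\bb) \in W$. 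You instead split into two semicontinuity halves: the upper bound via compactness of $\{0,1\}^\infty$, closedness of~$W$, the extremality of $\ell_{\ba,\bb}$ and $r_{\ba,\bb}$, and monotonicity of $d_{q_0,q_1}$ under enlarging the hole, invoking the hypothesis at a subsequential limit $(\ell^*,r^*) \in W$ that need not equal $(\ell_{\ba,\bb},r_{\ba,\bb})$; the lower bound via an explicit outer comparison sequence converging to $(\ell_{\ba,\bb},r_{\ba,\bb})$. Your argument bypasses Lemma~\ref{l:charab} and the finite-inequality analysis entirely, needing only Lemma~\ref{l:laba} plus soft topology, and it correctly diagnoses and circumvents the discontinuity of $(\ba,\bb) \mapsto (\ell_{\ba,\bb},r_{\ba,\bb})$; what the paper's sharper local reduction buys is an identification of exactly which point of $W$ governs a whole neighbourhood of $(\ba,\bb)$, a type of analysis that recurs in the proofs of Propositions~\ref{p:nontogether} and~\ref{p:together}. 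Two minor points to tidy: closedness of $W$ needs the remark that a disjunction of two closed conditions is closed (or a further subsequence on which one disjunct holds for all~$n$), and the $\limsup$ conclusion should be phrased by first extracting a subsequence realizing the $\limsup$ and then a convergent sub-subsequence of $(\ell^{(n)},r^{(n)})$ --- both are routine.
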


For the proof, we determine $\ell(\ba,\bb)$ and $r(\ba,\bb)$.

\begin{lemma} \label{l:ab}
Let $\ba = a_1a_2\cdots \preceq\bb = b_1b_2\cdots$.
Then we have the following. 
\begin{enumerate}[\upshape (i)]
\itemsep.5ex
\item \label{i:ab1}
 If $a_{n+1}a_{n+2}\cdots \in{]\ba,\bb[}$ for some $n \ge 1$, then $\Omega_{\ba,\bb} = \Omega_{(a_1\cdots a_n)^\infty,\bb}$. 
\item \label{i:ab2}
If $b_{n+1}b_{n+2}\cdots \in {]\ba,\bb[}$ for some $n \ge 1$, then $\Omega_{\ba,\bb} = \Omega_{\ba,(b_1\cdots b_n)^\infty}$. 
\end{enumerate}
\end{lemma}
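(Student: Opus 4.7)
My plan is to prove part~(i); part~(ii) follows by the symmetric argument interchanging the roles of $0,\ba$ and $1,\bb$. Introduce $\bc := (a_1\cdots a_n)^\infty$. The first step is to show $\bc \preceq \ba$. From $\sigma^n\ba\in{]\ba,\bb[}$, in particular $\sigma^n\ba\succ\ba$, there is a first disagreement position $j$ with $a_{n+i}=a_i$ for $i<j$ and $a_{n+j}=1$, $a_j=0$. The $n$-periodicity of $\bc$ together with these coincidences shows that $\bc$ and $\ba$ agree on positions $1,\dots,n+j-1$ and $c_{n+j}=a_j=0<1=a_{n+j}$, so $\bc\prec\ba$, immediately yielding the easy inclusion $\Omega_{\bc,\bb}\subseteq\Omega_{\ba,\bb}$.

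For the reverse inclusion I suppose $\bu\in\Omega_{\ba,\bb}$ has some shift $\bv:=\sigma^k\bu\in{]\bc,\bb[}$. Because $\bu$ avoids the smaller interval ${]\ba,\bb[}$, we must have $\bv\in{]\bc,\ba]}$, and the case $\bv=\ba$ gives $\sigma^n\bv=\sigma^n\ba\in{]\ba,\bb[}$, contradicting shift-invariance of $\Omega_{\ba,\bb}$. Otherwise I write $\bv=a_1\cdots a_m\cdot 0\cdot v_{m+2}v_{m+3}\cdots$, where $m$ is the agreement length with $\ba$, and set $k_0:=n+j$. A prefix comparison against $\bc$ forces $m\ge k_0-1$: if $m<k_0-1$, then $m+1\le k_0-1$, so $\ba$ and $\bc$ still coincide at position $m+1$ where $\bv$ carries a $0$ against $c_{m+1}=a_{m+1}=1$, giving $\bv\prec\bc$. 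In the subcase $m\ge k_0$ I show $\sigma^n\bv\in{]\ba,\bb[}$: at position $j$ it carries $a_{n+j}=1$ against $a_j=0$, so $\sigma^n\bv\succ\ba$, while comparing $\sigma^n\bv$ with $\sigma^n\ba$ at position $m-n+1$ gives a $0$ against $a_{m+1}=1$, so $\sigma^n\bv\prec\sigma^n\ba\prec\bb$; this contradicts $\bu\in\Omega_{\ba,\bb}$.

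The main obstacle is the borderline subcase $m=k_0-1$, where $\sigma^n\bv$ only coincides with $\ba$ on positions $1,\dots,j$ and the prefix alone gives no immediate contradiction. I plan to iterate via $\bv^{(l)}:=\sigma^{ln}\bv$. Whenever the previous iterate displays the length-$k_0$ prefix $a_1\cdots a_{k_0-1}\cdot 0$, the shift produces $\bv^{(l)}\prec\sigma^n\ba\prec\bb$, which together with $\bv^{(l)}\in\Omega_{\ba,\bb}$ forces $\bv^{(l)}\preceq\ba$. At each step either $\bv^{(l)}=\ba$ or $m^{(l)}\ge k_0$ (both already resolved and contradictory), or the length-$k_0$ prefix recurs and the recursion continues, or $\bv^{(l)}$ drops below $\bc$; ruling out this last escape by tracing the offending $0$-digit through the shifts to violate the sub-case structure back at $\bv^{(0)}$ is the key technical point. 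If the recursive alternative persists for every $l\ge 0$, the accumulated prefix equations $v_{ln+i}=a_i$ for $1\le i\le k_0-1$ and $v_{ln+k_0}=0$ cover every position of $\bv$ and force $\bv=\bc$, contradicting $\bv\succ\bc$; this finishes the proof of~(i).
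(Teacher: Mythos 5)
Your overall strategy is the same as the paper's: set $\bc:=(a_1\cdots a_n)^\infty$, show $\bc\prec\ba$ (which gives the easy inclusion $\Omega_{\bc,\bb}\subseteq\Omega_{\ba,\bb}$), and then, for a tail $\bv$ of a sequence of $\Omega_{\ba,\bb}$ lying in ${]\bc,\bb[}$, iterate the shift by $n$ places to force $\bv=\bc$ and reach a contradiction. Everything you actually carry out is correct. But one step is explicitly left open, and it is load-bearing: in the borderline recursion you must know that $\bv^{(l)}=\sigma^{ln}\bv$ never ``drops below $\bc$'', i.e.\ that $\bv^{(l)}\succ\bc$ at every stage, since without this the prefix comparison yielding $m^{(l)}\ge k_0-1$ is unavailable and your trichotomy collapses. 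You name this as ``the key technical point'' and only sketch a backtracking argument (``tracing the offending $0$-digit through the shifts back to $\bv^{(0)}$'') without executing it, so as written the proof is incomplete at exactly the place you identify as crucial.

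The good news is that the missing step is immediate and needs no backtracking: if $\bv^{(l-1)}\succ\bc$ and $\bv^{(l-1)}$ begins with $a_1\cdots a_{k_0-1}0$, then in particular $\bv^{(l-1)}$ and $\bc$ share the prefix $a_1\cdots a_n$ (since $k_0-1=n+j-1\ge n$), so deleting this common prefix preserves the strict inequality and $\bv^{(l)}=\sigma^n\bv^{(l-1)}\succ\sigma^n\bc=\bc$ by the $n$-periodicity of $\bc$. It is worth noting that the paper's proof avoids your case split ($\bv=\ba$, $m\ge k_0$, $m=k_0-1$) altogether by running the induction on the single two-sided condition $\bc\prec i_{k+1}i_{k+2}\cdots\preceq\ba$: this condition forces the prefix $a_1\cdots a_n$, hence $\bc\prec i_{k+n+1}i_{k+n+2}\cdots\preceq a_{n+1}a_{n+2}\cdots\prec\bb$, and avoidance of ${]\ba,\bb[}$ restores the upper bound $\preceq\ba$; iterating gives $i_{k+1}i_{k+2}\cdots=\bc$ directly, with the lower bound preserved for free. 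Once you insert the one-line periodicity argument above, your proof is complete and essentially equivalent to the paper's.
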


\begin{proof} 
Assume that $a_{n+1}a_{n+2}\cdots \in {]\ba,\bb[}$ for some $n \ge 1$.
Then $(a_1\cdots a_n)^\infty \prec \ba$, thus $\Omega_{(a_1\cdots a_n)^\infty,\bb} \subseteq \Omega_{\ba,\bb}$.
Suppose that there exists $i_1i_2\cdots \in \Omega_{\ba,\bb} \setminus \Omega_{(a_1\cdots a_n)^\infty,\bb}$.
Then $(a_1\cdots a_n)^\infty\prec i_{k+1}i_{k+2}\cdots \preceq \ba$ for some $k \ge 0$, i.e., $i_{k+1}\cdots i_{k+n} = a_1\cdots a_n$ and $(a_1\cdots a_n)^\infty\prec i_{k+n+1}i_{k+n+2}\cdots \preceq a_{n+1}a_{n+2}\cdots$. 
Now, $a_{n+1}a_{n+2}\cdots \in {]\ba,\bb[}$ and $i_1i_2\cdots \in
\Omega_{\ba,\bb}$ imply that $(a_1\cdots a_n)^\infty\prec i_{k+n+1}i_{k+n+2} \cdots \preceq \ba$, and we obtain inductively that $i_{k+1}i_{k+2}\cdots = (a_1\cdots a_n)^\infty$, contradicting that $i_{k+1}i_{k+2}\cdots \succ (a_1\cdots a_n)^\infty$.
This proves~(\ref{i:ab1}), and (\ref{i:ab2}) follows by symmetry.
\end{proof}

\begin{lemma} \label{l:charab}
Let $\ba = a_1a_1\cdots \preceq \bb = b_1b_2\cdots$.
Then
\[
\ell(\ba,\bb) \in \{\ba\} \cup \{(a_1\cdots a_n)^\infty : n \ge 1\},\quad r(\ba,\bb) \in \{\bb\} \cup \{(b_1\cdots b_n)^\infty : n \ge 1\}.
\]
\end{lemma}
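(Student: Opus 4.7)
My plan is to iterate Lemma~\ref{l:ab} to reduce $\ba$ to a periodic sequence of the required form. I argue for $\ell_{\ba,\bb}$; the claim for $r_{\ba,\bb}$ follows symmetrically via Lemma~\ref{l:ab}(\ref{i:ab2}). If $\ba=\bb$, then $\Omega_{\ba,\bb}=\{0,1\}^\infty$ and $\ell_{\ba,\bb}=\ba$; if $\ba\in\Omega_{\ba,\bb}$, then $\ell_{\ba,\bb}=\ba$ directly. Both land in the first case of the conclusion, so I assume henceforth $\ba\prec\bb$ and $\ba\notin\Omega_{\ba,\bb}$.

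Set $\ba^{(0)}:=\ba$. Recursively, while $\ba^{(i)}\notin\Omega_{\ba,\bb}$, let $k_i\ge1$ be the smallest integer with $\sigma^{k_i}(\ba^{(i)})\in{]\ba^{(i)},\bb[}$ and define $\ba^{(i+1)}:=(a^{(i)}_1\cdots a^{(i)}_{k_i})^\infty$. By Lemma~\ref{l:ab}(\ref{i:ab1}) applied to the pair $(\ba^{(i)},\bb)$ one obtains $\Omega_{\ba^{(i)},\bb}=\Omega_{\ba^{(i+1)},\bb}$, and iterating yields $\Omega_{\ba,\bb}=\Omega_{\ba^{(i+1)},\bb}$. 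A parallel induction shows $\ba^{(i+1)}=(a_1\cdots a_{k_i})^\infty$: given the inductive form $\ba^{(i)}=(a_1\cdots a_{k_{i-1}})^\infty$ together with the strict decrease $k_i<k_{i-1}$ (established below), the first $k_i$ letters of $\ba^{(i)}$ are $a_1\cdots a_{k_i}$.

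The heart of the argument is termination, which follows from the $k_{i-1}$-periodicity of $\ba^{(i)}$ for $i\ge1$. One has $\sigma^{k_{i-1}}(\ba^{(i)})=\ba^{(i)}\notin{]\ba^{(i)},\bb[}$, and for any $k>k_{i-1}$ periodicity gives $\sigma^k(\ba^{(i)})=\sigma^{k\bmod k_{i-1}}(\ba^{(i)})$; the minimal index $k_i$ must therefore satisfy $k_i<k_{i-1}$. Consequently the positive integers $k_0>k_1>\cdots$ strictly decrease, forcing the recursion to halt at some step $t$ with $\ba^{(t)}=(a_1\cdots a_{k_{t-1}})^\infty\in\Omega_{\ba,\bb}$. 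A further induction shows $\ba^{(i+1)}\prec\ba^{(i)}$: the two agree on the prefix $a^{(i)}_1\cdots a^{(i)}_{k_i}$, and from $\sigma^{k_i}(\ba^{(i)})\succ\ba^{(i)}$ one sees that the continuation of $\ba^{(i+1)}$ (which loops back to $a^{(i)}_1\cdots$) is strictly less than the continuation $\sigma^{k_i}(\ba^{(i)})$ of $\ba^{(i)}$.

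Finally, to identify $\ba^{(t)}$ with $\ell_{\ba,\bb}$: since $\ba^{(t)}\preceq\ba$ lies in $\Omega_{\ba,\bb}$, we have $\ba^{(t)}\preceq\ell_{\ba,\bb}$; conversely, any $\bu\in\Omega_{\ba,\bb}=\Omega_{\ba^{(t)},\bb}$ with $\bu\preceq\ba$ satisfies $\bu\prec\bb$ (from $\ba\prec\bb$), so the defining property of $\Omega_{\ba^{(t)},\bb}$ forces $\bu\preceq\ba^{(t)}$, excluding the alternative $\bu\succeq\bb$. Hence $\ell_{\ba,\bb}=\ba^{(t)}=(a_1\cdots a_{k_{t-1}})^\infty$, with $n=k_{t-1}\ge 1$, giving the desired form. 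The main obstacle is establishing the strict decrease $k_i<k_{i-1}$, which hinges on the periodicity observation above; without it, the recursion could fail to terminate or produce sequences whose periods are not prefixes of $\ba$.
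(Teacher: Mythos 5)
Your proposal is correct and follows essentially the same route as the paper: both iterate Lemma~\ref{l:ab}~(\ref{i:ab1}), observe that after the first step the left endpoint is purely periodic so that the minimal shift landing in the open interval has index strictly smaller than the current period (whence the periods strictly decrease and the recursion terminates), and conclude that the limiting periodic word is $\ell_{\ba,\bb}$. Your write-up merely makes explicit the termination and identification steps that the paper leaves implicit.
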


\begin{proof}
If $\ell(\ba,\bb) \ne \ba$, then $\Omega_{\ba,\bb} = \Omega_{(a_1\cdots a_n)^\infty,\bb}$ for some $n \ge 1$ by Lemma~\ref{l:ab}~(\ref{i:ab1}), thus $\ell(\ba,\bb) = \ell((a_1\cdots a_n)^\infty,\bb)$ with $(a_1\cdots a_n)^\infty \prec \ba \preceq \bb$.
If $\ell((a_1\cdots a_n)^\infty,\bb) \ne (a_1\cdots a_n)^\infty$, then  $\Omega_{(a_1\cdots a_n)^\infty,\bb} = \Omega_{(a_1\cdots a_m)^\infty,\bb}$ for some $m < n$ by Lemma~\ref{l:ab}~(\ref{i:ab1}).
Repeating this argument and because $\ell(a_1^\infty,\bb) = a_1^\infty$ (when $a_1^\infty \preceq \bb$), we arrive at $\ell(\ba,\bb) \in \{\ba\} \cup \{(a_1\cdots a_n)^\infty : n \ge 1\}$.
The statement for $r(\ba,\bb)$ is symmetric.
\end{proof}

\begin{proof}[Proof of Lemma~\ref{l:abinOmega}]
Assume w.l.o.g.\ $\ba \notin \Omega_{\ba,\bb}$.
Then, by the proof of Lemma~\ref{l:charab}, $\ell(\ba,\bb)$ is determined by finitely many inequalities of the form $\ba \prec a_{n+1}a_{n+2}\cdots \prec \bb$, $(a_1\cdots a_n)^\infty \prec a_{m+1}\cdots a_n(a_1\cdots a_n)^\infty \prec \bb$, etc. 
Since these inequalities are strict, they are also satisfied for all $(\ba',\bb')$ in a neighbourhood of $(\ba,\bb)$, i.e., when $a'_1\cdots a'_k = a_1\cdots a_k$ and $b'_1\cdots b'_k = b_1\cdots b_k$ for sufficiently large~$k$, which implies that $\ell(\ba',\bb') \preceq \ell(\ba,\bb)$ and thus $\Omega_{\ba',\bb'} = \Omega_{\ell(\ba,\bb),\bb'}$ for these $(\ba',\bb')$. 

If $\bb \notin \Omega_{\ba,\bb}$, then we have symmetrically $r(\ba',\bb') \succeq r(\ba,\bb)$ for all $(\ba',\bb')$ in a neighbourhood of $(\ba,\bb)$.
Since $\ell(\ba,\bb), r(\ba,\bb) \in \Omega_{\ba',\bb'}$ for all $\ba' \succeq \ell(\ba,\bb)$, $\bb' \preceq r(\ba,\bb)$, with $\ell(\ba,\bb) \prec \ba$, $r(\ba,\bb) \succ \bb$, we obtain that $\Omega_{\ba',\bb'} = \Omega_{\ba,\bb}$ for all $(\ba',\bb')$ in a neighbourhood of $(\ba,\bb)$, thus $d_{q_0,q_1}$ is constant in this neighbourhood, hence continuous at $(\ba,\bb)$. 

If $\bb \in \Omega_{\ba,\bb}$, then $d_{q_0,q_1}$ is continuous at $(\ell(\ba,\bb),\bb)$ by the assumption of the lemma, thus the continuity of the projection $(\ba',\bb') \mapsto (\ell(\ba,\bb),\bb')$ and the equality $d_{q_0,q_1}(\ba',\bb') = d_{q_0,q_1}(\ell(\ba,\bb),\bb')$ in a neighbourhood of $(\ba,\bb)$ give that $d_{q_0,q_1}$ is continuous at $(\ba,\bb)$. 
\end{proof}

\begin{lemma} \label{l:phi}
Let $\varphi$ be a substitution such that $\varphi(0) \in 0\{0,1\}^*$ and $\varphi(1) \in 1\{0,1\}^*$, $\ba \in 0\{0,1\}^\infty$, $\bb \in 1\{0,1\}^\infty$.
Then
\[
0 \le d_{q_0,q_1}(\varphi(\ba),\varphi(\bb)) - d_{q_0,q_1}(\varphi(0^\infty),\varphi(1^\infty)) \le \dim_H \pi_{q_0,q_1}\big(\varphi(\Omega_{\ba,\bb})\big).
\]
In particular, $d_{q_0,q_1}(\varphi(\ba),\varphi(\bb)) = d_{q_0,q_1}(\varphi(0^\infty),\varphi(1^\infty))$ when $h(\Omega_{\ba,\bb}) = 0$.
\end{lemma}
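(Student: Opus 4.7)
Since $\varphi(0)$ starts with $0$ and $\varphi(1)$ with $1$, a standard lexicographic comparison shows that $\varphi$ is strictly order-preserving on $\{0,1\}^\infty$, hence $\varphi(0^\infty) \preceq \varphi(\ba) \preceq \varphi(\bb) \preceq \varphi(1^\infty)$. Therefore ${]\varphi(\ba),\varphi(\bb)[} \subseteq {]\varphi(0^\infty),\varphi(1^\infty)[}$ and $\Omega_{\varphi(0^\infty),\varphi(1^\infty)} \subseteq \Omega_{\varphi(\ba),\varphi(\bb)}$; pushing forward by $\pi_{q_0,q_1}$ and using monotonicity of $\dim_H$ yields the first (nonnegativity) inequality.

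\textbf{Upper bound via escape times.} For $\bu \in \Omega_{\varphi(\ba),\varphi(\bb)} \setminus \Omega_{\varphi(0^\infty),\varphi(1^\infty)}$, let $n(\bu)$ be the smallest $n \ge 0$ with $\sigma^n(\bu) \in {]\varphi(0^\infty),\varphi(1^\infty)[}$. Since $\bu$ also avoids ${]\varphi(\ba),\varphi(\bb)[}$, the tail $\bu' := \sigma^{n(\bu)}(\bu)$ lies in
$S := \Omega_{\varphi(\ba),\varphi(\bb)} \cap \bigl({]\varphi(0^\infty),\varphi(\ba)]} \cup {[\varphi(\bb),\varphi(1^\infty)[}\bigr)$. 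The identity $\pi_{q_0,q_1}(\bu) = f_{u_1} \circ \cdots \circ f_{u_{n(\bu)}}(\pi_{q_0,q_1}(\bu'))$, in which the composition $f_w := f_{w_1}\circ\cdots\circ f_{w_m}$ is an affine contraction, then yields
\[
\pi_{q_0,q_1}(\Omega_{\varphi(\ba),\varphi(\bb)}) \subseteq \pi_{q_0,q_1}(\Omega_{\varphi(0^\infty),\varphi(1^\infty)}) \cup \bigcup_{w \in \{0,1\}^*} f_w\bigl(\pi_{q_0,q_1}(S)\bigr),
\]
a countable union of bi-Lipschitz images. Hence $d_{q_0,q_1}(\varphi(\ba),\varphi(\bb)) \le \max\{d_{q_0,q_1}(\varphi(0^\infty),\varphi(1^\infty)),\, \dim_H \pi_{q_0,q_1}(S)\}$; because $\max\{A,B\} \le A+B$ for nonnegative $A,B$, the lemma reduces to proving $\dim_H \pi_{q_0,q_1}(S) \le \dim_H \pi_{q_0,q_1}(\varphi(\Omega_{\ba,\bb}))$.

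\textbf{Main obstacle and consequence.} The hard part is this last reduction: I plan to show that every $\bu' \in S$ admits a factorization $\bu' = w\,\varphi(\bv)$ with $\bv \in \Omega_{\ba,\bb}$ and $w$ a finite word of length at most $\max\{|\varphi(0)|,|\varphi(1)|\}$, yielding $\pi_{q_0,q_1}(S) \subseteq \bigcup_w f_w(\pi_{q_0,q_1}(\varphi(\Omega_{\ba,\bb})))$ over a finite family of affine contractions and hence the desired dimension bound. The factorization proceeds by a greedy $\varphi$-parsing: for $\bu' \in {]\varphi(0^\infty),\varphi(\ba)]}$, the two bounds pin down the length-$|\varphi(0)|$ prefix of $\bu'$ to equal $\varphi(0)$ exactly (a lex-smaller prefix would force $\bu' \prec \varphi(0^\infty)$, a lex-larger one $\bu' \succ \varphi(\ba)$), and one iterates on $\sigma^{|\varphi(0)|}(\bu')$ with the induced bounds, possibly switching between the two branches of $S$ as consecutive letters of $\ba,\bb$ alternate between $0$ and $1$; the branch ${[\varphi(\bb),\varphi(1^\infty)[}$ is treated dually, with $w$ absorbing any initial phase misalignment. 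Injectivity of $\varphi$ on sequences (guaranteed by the distinct leading letters of $\varphi(0),\varphi(1)$) together with strict order-preservation transports $w\,\varphi(\bv) \in \Omega_{\varphi(\ba),\varphi(\bb)}$ to $\bv \in \Omega_{\ba,\bb}$. Finally, for the ``in particular'' clause, $h(\Omega_{\ba,\bb}) = 0$ gives subexponential growth of $\#L_{\ba,\bb,n}$; since a length-$N$ prefix of a sequence in $\varphi(\Omega_{\ba,\bb})$ is determined by a length-$\lceil N/\min\{|\varphi(0)|,|\varphi(1)|\}\rceil$ prefix of a sequence in $\Omega_{\ba,\bb}$, the analogous count for $\varphi(\Omega_{\ba,\bb})$ is subexponential too, so a direct Moran-type covering argument (or Lemma~\ref{l:Barreira} applied to a natural subshift containing $\varphi(\Omega_{\ba,\bb})$) forces $\dim_H \pi_{q_0,q_1}(\varphi(\Omega_{\ba,\bb})) = 0$, and the announced equality follows from the two-sided inequality already proved.
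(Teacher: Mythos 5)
Your proposal is correct and follows essentially the same route as the paper: both decompose $\Omega_{\varphi(\ba),\varphi(\bb)}\setminus\Omega_{\varphi(0^\infty),\varphi(1^\infty)}$ into sequences that, after a finite prefix, lie in $\varphi(\Omega_{\ba,\bb})$ — the paper locates a factor $\varphi(0)1$ or $\varphi(1)0$ and forces the $\varphi$-block structure from there, while you parse the first-escape tail directly, but the underlying lexicographic-sandwiching mechanism is identical — and then both conclude by countable stability of Hausdorff dimension under the affine maps $f_w$. The step you flag as the main obstacle does go through (indeed with $w$ empty, since the two bounds on an element of $S$ and the hole-avoidance force each successive block exactly), so your sketch is sound.
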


\begin{proof}
If a sequence in $\Omega_{\varphi(01^\infty),\varphi(10^\infty)}$ starts with $\varphi(10^k)0$ or $\varphi(01^k)1$ for some $k \ge 0$, then it starts with $\varphi(10^k0)$ and $\varphi(01^k1)$ respectively. 
Recursively, this implies that the sequence is a concatenation of $\varphi(0)$ and~$\varphi(1)$, i.e.,
\[
[\varphi(0)1]_{\varphi(01^\infty),\varphi(10^\infty)} \cup [\varphi(1)0]_{\varphi(01^\infty),\varphi(10^\infty)} \subset \varphi(\{0,1\}^\infty).
\]
If $i_1i_2\cdots \in \Omega_{\varphi(01^\infty),\varphi(10^\infty)} \setminus \Omega_{\varphi(0^\infty),\varphi(1^\infty)}$, then
\[
i_ni_{n+1}\cdots \in (\varphi(0^\infty),\varphi(01^\infty)] \cup [\varphi(10^\infty),\varphi(1^\infty)) \quad \mbox{for some}\ n \ge 1
\]
and thus $i_m\cdots i_{m+|\varphi(0)|} = \varphi(0)1$ or $i_m\cdots i_{m+|\varphi(1)|} = \varphi(1)0$ for some $m \ge 1$.
This implies that $i_1i_2\cdots$ ends with a sequence in $\varphi(\{0,1\}^\infty)$.
Since $\varphi(\{0,1\}^\infty) \cap \Omega_{\varphi(\ba),\varphi(\bb)} = \varphi(\Omega_{\ba,\bb})$, we obtain that
\[
\Omega_{\varphi(\ba),\varphi(\bb)} \setminus \Omega_{\varphi(0^\infty),\varphi(1^\infty)} \subset \bigcup_{w\in\{0,1\}^*} w\varphi(\Omega_{\ba,\bb}),
\]
hence
\[
\begin{aligned}
0 & \le d_{q_0,q_1}(\varphi(\ba),\varphi(\bb)) - d_{q_0,q_1}(\varphi(0^\infty), \varphi(1^\infty)) \\
& \le \sup_{w\in\{0,1\}^*} \dim_H \pi_{q_0,q_1}\big(w\varphi(\Omega_{\ba,\bb})\big) = \dim_H \pi_{q_0,q_1}\big(\varphi(\Omega_{\ba,\bb})\big).
\end{aligned}
\]

It remains to show that $h(\Omega_{\ba,\bb}) = 0$ implies $\dim_H \pi_{q_0,q_1}\big(\varphi(\Omega_{\ba,\bb})\big) = 0$.
By Proposition~\ref{p:K}, $h(\Omega_{\ba,\bb}) = 0$ implies $\dim_H \pi_{q'_0,q'_1}(\Omega_{\ba,\bb}) = 0$ for all $q'_0, q'_1 > 1$; this also holds for $(\ba,\bb) \not\in W$ because $\Omega_{\ba,\bb} = \Omega_{\ell(\ba,\bb),r(\ba,\bb)}$ and $(\ell(\ba,\bb),r(\ba,\bb)) \in W$.
Setting $q'_0 = q_{u_1} \cdots q_{u_m}$, $d_0 = \sum_{k=1}^m u_k
\prod_{j=k+1}^m q_{u_j}$ if $\varphi(0) = u_1\cdots u_m$, and $q'_1 = q_{v_1} \cdots q_{v_n}$, $d_1 = \sum_{k=1}^n v_k \prod_{j=k+1}^n q_{u_j}$ if $\varphi(1) = v_1\cdots v_n$, we have
\[
\pi_{q_0,q_1}(\varphi(i_1i_2\cdots)) = \sum_{k=1}^\infty \frac{d_{i_k}}{q'_{i_1}\cdots q'_{i_k}} = \frac{d_0}{q'_0{-}1} + \bigg(d_1-d_0\frac{q'_1{-}1}{q'_0{-}1}\bigg) \pi_{q'_0,q'_1}(i_1i_2\cdots),
\]
where the second equality is given by \cite[Lemma~3.1]{KomSteZou2022}, thus $\dim_H \pi_{q_0,q_1}\big(\varphi(\Omega_{\ba,\bb})\big) = \dim_H \pi_{q'_0,q'_1}(\Omega_{\ba,\bb})$.
\end{proof}

The following lemma shows that $d_{q_0,q_1}$ is continuous at $(\ba,\bb)$ given by certain sequences of substitions. 

\begin{lemma} \label{l:phik}
Let $\ba \in 0\{0,1\}^\infty$, $\bb \in 1\{0,1\}^\infty$. 
If there exists a sequence of substitutions $(\varphi_k)_{k\ge1}$ such that the length of  $\varphi_k(01)$ is unbounded and
\begin{equation*} \label{e:phii}
\varphi_k(0^\infty) \prec \ba \prec \varphi_k(01^\infty),\ \varphi_k(10^\infty) \prec \bb \prec \varphi_k(1^\infty) \quad \mbox{for all}\ k \ge 0,
\end{equation*}
then $d_{q_0,q_1}$ is continuous at $(\ba,\bb)$.
\end{lemma}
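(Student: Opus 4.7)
The plan is to sandwich $d_{q_0,q_1}(\ba,\bb)$ and $d_{q_0,q_1}(\ba',\bb')$ for $(\ba',\bb')$ near $(\ba,\bb)$ between two quantities depending on $\varphi_k$ whose gap shrinks to $0$ as $k\to\infty$. Since $\varphi_k(0^\infty)=\varphi_k(0)^\infty$, $\varphi_k(1^\infty)=\varphi_k(1)^\infty$, $\varphi_k(01^\infty)=\varphi_k(0)\varphi_k(1)^\infty$, $\varphi_k(10^\infty)=\varphi_k(1)\varphi_k(0)^\infty$, the hypothesis translates into $\Omega_{\varphi_k(0)^\infty,\varphi_k(1)^\infty}\subseteq\Omega_{\ba,\bb}\subseteq\Omega_{\varphi_k(0)\varphi_k(1)^\infty,\varphi_k(1)\varphi_k(0)^\infty}$, and monotonicity of $(\ba,\bb)\mapsto\dim_H\pi_{q_0,q_1}(\Omega_{\ba,\bb})$ in its arguments then yields matching upper and lower bounds for $d_{q_0,q_1}(\ba,\bb)$. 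Crucially, all four strict lexicographic inequalities in the hypothesis depend only on the first $|\varphi_k(01)|$ symbols of~$\ba$ and~$\bb$, so they persist in some neighborhood of $(\ba,\bb)$ in the product topology; hence the same sandwich holds for $(\ba',\bb')$ in that neighborhood.

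Next, I would invoke Lemma~\ref{l:phi} with the pair $(01^\infty,10^\infty)\in W$, for which $\Omega_{01^\infty,10^\infty}=\{0,1\}^\infty$. This controls the gap between the two bounds:
\[
d_{q_0,q_1}(\varphi_k(0)\varphi_k(1)^\infty,\varphi_k(1)\varphi_k(0)^\infty)-d_{q_0,q_1}(\varphi_k(0)^\infty,\varphi_k(1)^\infty)\le s_k,
\]
where $s_k := \dim_H \pi_{q_0,q_1}(\varphi_k(\{0,1\}^\infty))$. Combined with the sandwich, one obtains $|d_{q_0,q_1}(\ba',\bb')-d_{q_0,q_1}(\ba,\bb)|\le s_k$ in a neighborhood of $(\ba,\bb)$.

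It remains to show that $s_k$ can be made arbitrarily small along the subsequence where $|\varphi_k(01)|\to\infty$. The set $\pi_{q_0,q_1}(\varphi_k(\{0,1\}^\infty))$ is precisely the attractor of the IFS $\{f_{\varphi_k(0)},f_{\varphi_k(1)}\}$, whose contraction ratios are $q_{\varphi_k(0)}^{-1}$ and $q_{\varphi_k(1)}^{-1}$; hence $s_k$ is dominated by the similarity dimension, the $s$ solving $q_{\varphi_k(0)}^{-s}+q_{\varphi_k(1)}^{-s}=1$. Since $q_{\varphi_k(i)}\ge\min(q_0,q_1)^{|\varphi_k(i)|}>1$ and $|\varphi_k(0)|+|\varphi_k(1)|$ is unbounded, the product $q_{\varphi_k(0)}q_{\varphi_k(1)}$, and therefore $\max(q_{\varphi_k(0)},q_{\varphi_k(1)})$, is unbounded. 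Feeding this into the similarity-dimension equation (using that the smaller ratio is bounded away from~$1$) forces $s_k\to 0$ along the subsequence. Given $\epsilon>0$, we pick such a $k$ with $s_k<\epsilon$; the neighborhood argument above then gives $|d_{q_0,q_1}(\ba',\bb')-d_{q_0,q_1}(\ba,\bb)|<\epsilon$ near $(\ba,\bb)$, proving continuity.

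The main obstacle is the dimension estimate in the last paragraph: the hypothesis only controls the total length $|\varphi_k(01)|$, so one of $|\varphi_k(0)|$ or $|\varphi_k(1)|$ could well stay bounded (for instance $\varphi_k(0)=0$ for every~$k$). One must therefore argue that the similarity dimension of a two-map IFS tends to $0$ under the mere assumption that one of the ratios goes to~$0$ while the other stays bounded away from~$1$, which is a short but essential convexity computation on $q_{\varphi_k(0)}^{-s}+q_{\varphi_k(1)}^{-s}=1$.
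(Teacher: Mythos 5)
Your proposal is correct and follows essentially the same route as the paper's proof: sandwich $d_{q_0,q_1}$ between $d_{q_0,q_1}(\varphi_k(0^\infty),\varphi_k(1^\infty))$ and $d_{q_0,q_1}(\varphi_k(01^\infty),\varphi_k(10^\infty))$, bound the gap by $s_k=\dim_H\pi_{q_0,q_1}(\varphi_k(\{0,1\}^\infty))$ via Lemma~\ref{l:phi}, and let $s_k\to0$ using the similarity-dimension equation. The step you flag as the main obstacle is genuine but immediate (for fixed $s>0$, $q_{\varphi_k(0)}^{-s}+q_{\varphi_k(1)}^{-s}\le\min(q_0,q_1)^{-s}+\min(q_0,q_1)^{-s L_k}<1$ once the longer length $L_k$ is large, no convexity needed), and you actually spell out more than the paper, which asserts $s_k\to0$ without comment.
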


\begin{proof}
By Lemma~\ref{l:phi}, we have 
\[
|d_{q_0,q_1}(\ba',\bb') - d_{q_0,q_1}(\ba,\bb)| \le s_k := \dim_H \pi_{q_0,q_1}(\varphi_k(\{0,1\}^\infty))
\]
for all $\varphi_k(0^\infty) \prec \ba' \prec \varphi_k(01^\infty)$, $\varphi_k(10^\infty) \prec \bb' \prec \varphi_k(1^\infty)$, i.e., for $(\ba',\bb')$ in a neighbourhood of $(\ba,\bb)$. 
Since $s_k$ is given by
\[
\frac{1}{(q_{u_1}\cdots q_{u_m})^{s_k}} + \frac{1}{(q_{v_1}\cdots q_{v_n})^{s_k}} = 1
\] 
where $\varphi_k(0) = u_1\cdots u_m$, $\varphi_k(1) = v_1\cdots v_n$, the condition that the length of  $\varphi_k(01)$ is unbounded implies that $s_k$ is arbritarily close to~$0$, which proves the lemma.
\end{proof}

\begin{lemma} \label{l:sclose}
Let $q_0, q_1 > 1$.
The restriction of the map $d_{q_0,q_1}$ to~$W$ is continuous.
\end{lemma}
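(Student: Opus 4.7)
The plan is to parametrize $d_{q_0,q_1}$ via the maximal root of the ``dimension equation'' and show this root varies continuously in $(\ba,\bb) \in W$. By Proposition~\ref{p:K}, $d_{q_0,q_1}(\ba,\bb) = \min\{1, s^*(\ba,\bb)\}$, where $s^*(\ba,\bb)$ is the maximal positive solution of $F(\ba,\bb,t) = 0$ for
\[
F(\bu,\bv,t) := \pi_{q_0^t,q_1^t}(\bv) - \pi_{q_0^t,q_1^t}(\bu),
\]
with the convention $s^* = 0$ when no positive solution exists. Since $\min\{1,\cdot\}$ is continuous, it suffices to prove that $s^*$ is continuous on $W$. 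The function $F$ is jointly continuous on $\{0,1\}^\infty \times \{0,1\}^\infty \times (0,\infty)$ and real-analytic in $t$ for fixed $\ba,\bb$, because the defining series of exponentials in $t$ converges uniformly on compact $t$-intervals.

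The first step will be a universal upper bound: there exists $M > 0$ depending only on $q_0,q_1$ with $F(\bu,\bv,t) > 0$ for all $\bu \in 0\{0,1\}^\infty$, $\bv \in 1\{0,1\}^\infty$, $t \ge M$. This follows from $\pi_{q_0^t,q_1^t}(\bu) \le \frac{1}{q_0^t(q_1^t-1)}$ and $\pi_{q_0^t,q_1^t}(\bv) \ge \frac{1}{q_1^t}$, which give $F > 0$ as soon as $q_0^{-t} + q_1^{-t} < 1$; in particular $s^*(\bu,\bv) < M$ always. The next and crucial step is to show, for $(\ba,\bb) \in W$ with $s^* > 0$, that $F(\ba,\bb,t) < 0$ on some interval $(s^*{-}\delta,s^*)$. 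Since $(\ba,\bb) \in W$, the lexicographically extremal elements of $[0]_{\ba,\bb}$ and $[1]_{\ba,\bb}$ are $\ba$ and $\bb$, so $\pi_{q_0^t,q_1^t}(\ba) = \max \pi_{q_0^t,q_1^t}([0]_{\ba,\bb})$ and $\pi_{q_0^t,q_1^t}(\bb) = \min \pi_{q_0^t,q_1^t}([1]_{\ba,\bb})$; applying Lemma~\ref{l:smallfull} at parameters $t < s^*$ then gives $F(\ba,\bb,t) \le 0$ on $(0,s^*)$. Because $F(\ba,\bb,\cdot)$ is real-analytic and not identically zero (it is positive for $t \ge M$), its zeros are isolated, so the weak inequality must be strict on $(s^*{-}\delta,s^*)$ for some $\delta > 0$.

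The final step is a compactness-and-IVT squeeze. Let $(\ba_k,\bb_k) \to (\ba,\bb)$ in $W$, write $s^*_k := s^*(\ba_k,\bb_k)$, and fix $\epsilon > 0$ small enough that $s^*+\epsilon \le M$. On the compact space $\{0,1\}^\infty \times \{0,1\}^\infty \times [s^*{+}\epsilon, M]$ the function $F$ is uniformly continuous, and $F(\ba,\bb,\cdot) \ge \eta$ on $[s^*{+}\epsilon,M]$ for some $\eta > 0$ (by the first step and $F > 0$ strictly on $(s^*,\infty)$). Consequently $F(\ba_k,\bb_k,\cdot) > 0$ on $[s^*{+}\epsilon,M]$ for large $k$, and the first step extends this positivity to $[s^*{+}\epsilon,\infty)$, forcing $s^*_k < s^*+\epsilon$. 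When $s^* > 0$, pick $\delta \in (0,\epsilon)$ with $F(\ba,\bb,s^*{-}\delta) < 0$ from the sign-change step; then $F(\ba_k,\bb_k,s^*{-}\delta) < 0$ for large $k$, so the intermediate value theorem applied between $s^*-\delta$ and $M$ yields a root of $F(\ba_k,\bb_k,\cdot)$, hence $s^*_k > s^*-\delta$. Letting $\epsilon \downarrow 0$ delivers $s^*_k \to s^*$.

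The main obstacle is the sign-change step. Merely knowing $F(\ba,\bb,s^*) = 0$ and $F > 0$ to the right of $s^*$ is a priori compatible with $s^*$ being a zero of even order at which $F \ge 0$ on both sides; in that degenerate situation nearby perturbations $(\ba_k,\bb_k)$ could lose the root entirely, sending $s^*_k$ much below $s^*$ and destroying lower semicontinuity. The geometric input of Lemma~\ref{l:smallfull} (which forces $F \le 0$ strictly to the left of $s^*$) combined with real-analyticity of $F$ (which rules out infinite-order tangency to zero) is precisely what excludes this possibility; once that is in place, the rest of the argument is a routine compactness-and-IVT continuity bootstrap.
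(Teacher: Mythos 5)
Your overall architecture (upper bound $M$, upper semicontinuity of the largest root via uniform positivity on $[s^*{+}\epsilon,M]$, lower semicontinuity via a sign change at $s^*$ plus the intermediate value theorem) is the right shape, and the upper-semicontinuity half matches the paper. The gap is in the step you yourself identify as crucial: the deduction that $F(\ba,\bb,t)\le 0$ for $0<t<s^*$. Lemma~\ref{l:smallfull} gives $\max \pi_{q_0^t,q_1^t}([0]_{\ba,\bb}) \ge \min \pi_{q_0^t,q_1^t}([1]_{\ba,\bb})$, and to convert this into $\pi_{q_0^t,q_1^t}(\ba)\ge\pi_{q_0^t,q_1^t}(\bb)$ you assert that lexicographic extremality of $\ba$ in $[0]_{\ba,\bb}$ and of $\bb$ in $[1]_{\ba,\bb}$ implies extremality of the $\pi_{q_0^t,q_1^t}$-values. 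That implication is exactly the compatibility of $\pi_{q_0^t,q_1^t}$ with the lexicographic order across the $0/1$ boundary, which holds only when $q_0^{-t}+q_1^{-t}\le 1$: if $\bu\prec\bv$ with first difference $u_{n+1}=0<1=v_{n+1}$, then $\pi(\sigma^n\bu)\le \frac{1}{q_0^t(q_1^t-1)}$ and $\pi(\sigma^n\bv)\ge\frac{1}{q_1^t}$, and the first bound exceeds the second precisely when $q_0^{-t}+q_1^{-t}>1$. But every $t<s^*$ lies in that overlapping regime (since $s^*$ is at most the root of $q_0^{-s}+q_1^{-s}=1$), so the claim is unjustified exactly where you need it, and an attempt to prove it by induction on the first disagreement position runs in a circle (it needs $\min\pi([1])\ge\max\pi([0])$, i.e., the negation of what Lemma~\ref{l:smallfull} provides). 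Without this, you have no control on the sign of $F(\ba,\bb,\cdot)$ just left of $s^*$; if $F\ge 0$ there (an even-order largest zero), nearby $(\ba_k,\bb_k)$ could lose the root and lower semicontinuity fails, which is precisely the failure mode you flagged.

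The paper does not claim any sign change at $s^*$ and explicitly treats the case $\tilde{K}_{\ba,\bb}(s{-}\varepsilon)>0$. Its substitute for your sign-change step is algebraic rather than geometric: from the identity $Q_{\ba,\bb}\,K_{\ba,\bb}=\frac{z_1}{1-z_1}$ (equation~\eqref{e:QK}), the function $\tilde{K}_{\ba',\bb'}(t)=1/Q_{\ba',\bb'}(q_0^{-t},q_1^{-t})$ is strictly increasing in $t$ above the largest root $s'$ of $\tilde{K}_{\ba',\bb'}$, for every $(\ba',\bb')\in W$. Hence if $\tilde{K}_{\ba,\bb}(s{-}\varepsilon)>0$, one forces $\tilde{K}_{\ba',\bb'}(s{-}\varepsilon)>\tilde{K}_{\ba',\bb'}(s)$ for nearby $(\ba',\bb')$, which is incompatible with monotonicity on $[s{-}\varepsilon,\infty)$ and therefore yields $s'>s-\varepsilon$ without ever needing $F<0$ to the left of $s^*$. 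If you want to keep your structure, you should replace your sign-change step by this monotonicity argument (note it uses the hypothesis $(\ba',\bb')\in W$ for the perturbed pair, which is available here since the lemma only concerns the restriction of $d_{q_0,q_1}$ to $W$).
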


\begin{proof}
Let $(\ba,\bb) \in W$.
Write 
\[
\tilde{K}_{\ba, \bb}(t) := \big(\pi_{q_0^t,q_1^t}(\bb) - \pi_{q_0^t,q_1^t}(\ba)\big)\,  (q_1^t{-}1)
\]
and recall that $\tilde{K}_{\ba, \bb}(t) = 1/Q_{\ba,\bb}(q_0^{-t},q_1^{-t})$ for $t > s$, where $s > 0$ is the largest root of $\tilde{K}_{\ba, \bb}(t) = 0$ (when such a root exists).
Therefore, $\tilde{K}_{\ba, \bb}(t)$ is strictly monotonically increasing for $t \ge s$. 
Moreover, the map $(\ba,\bb) \mapsto \tilde{K}_{\ba,\bb}(t)$ is continuous for all $t > 0$.
For each $\varepsilon > 0$, there is a neighbourhood~$V_\varepsilon$ of $(\ba,\bb)$ such that $\tilde{K}_{\ba',\bb'}(t) > 0$ for all $(\ba',\bb') \in V_\varepsilon$ and all $t \ge s{+}\varepsilon$.

Since $\tilde{K}_{\ba,\bb}$ is analytic, we have $\tilde{K}_{\ba,\bb}(s{-}\varepsilon) \ne 0$ for arbitrarily small $\varepsilon > 0$.
If $\tilde{K}_{\ba,\bb}(s{-}\varepsilon) < 0$, then we can assume that $V_\varepsilon$ is sufficiently small such that $|\tilde{K}_{\ba, \bb}(s{-}\varepsilon) - \tilde{K}_{\ba', \bb'}(s{-}\varepsilon)| < |\tilde{K}_{\ba, \bb}(s{-}\varepsilon)|$ for all $(\ba',\bb') \in V_\varepsilon$.
This implies that
\[
\tilde{K}_{\ba', \bb'}(s{-}\varepsilon) < 0 < \tilde{K}_{\ba', \bb'}(s{+}\varepsilon).
\]
Hence, by the intermediate value theorem, $\tilde{K}_{\ba', \bb'}(t)$ has a root $s' \in (s{-}\varepsilon, s{+}\varepsilon)$, which is the largest root of $\tilde{K}_{\ba', \bb'}(t)$.
If $\tilde{K}_{\ba,\bb}(s{-}\varepsilon) > 0$, then we can assume that $V_\varepsilon$ is sufficiently small such that
\[
|\tilde{K}_{\ba, \bb}(s{-}\varepsilon) - \tilde{K}_{\ba', \bb'}(s{-}\varepsilon)| < \frac{|\tilde{K}_{\ba, \bb}(s{-}\varepsilon)|}{2},\ |\tilde{K}_{\ba, \bb}(s) - \tilde{K}_{\ba',\bb'}(s)| < \frac{|\tilde{K}_{\ba, \bb}(s{-}\varepsilon)|}{2},
\]
for all $(\ba',\bb') \in V_\varepsilon$. 
Since $\tilde{K}_{\ba, \bb}(s) = 0$, we obtain that \[\tilde{K}_{\ba', \bb'}(s{-}\varepsilon) > \frac{\tilde{K}_{\ba, \bb}(s{-}\varepsilon)}{2} > \tilde{K}_{\ba', \bb'}(s).
\]
If $(\ba',\bb') \in W$, then $\tilde{K}_{\ba',\bb'}(t)$ is strictly increasing above the largest root~$s'$ of $\tilde{K}_{\ba', \bb'}(t) = 0$, thus $s' > s{-}\varepsilon$, and $s' < s{+}\varepsilon$ by the definition of~$V_\varepsilon$.

We have shown that, for arbitrarily small $\varepsilon > 0$, there is a neighbourhood~$V_\varepsilon$ of $(\ba,\bb)$ such that the largest root of $\tilde{K}_{\ba',\bb'}(t)$ is $\varepsilon$-close to the largest root of $\tilde{K}_{\ba,\bb}(t)$ for all $(\ba',\bb') \in V_\varepsilon \cap W$, which proves the lemma by Theorem~\ref{t:dimH}.
\end{proof}

Note that Lemma~\ref{l:sclose} only states that $d_{q_0,q_1}(\ba,\bb)$ is close to $d_{q_0,q_1}(\ba',\bb')$ when $(\ba,\bb)$ and $(\ba',\bb')$ are in~$W$. 
In view of Lemma~\ref{l:abinOmega}, it remains to consider $d_{q_0,q_1}$ at arbitrary points close to $(\ba,\bb) \in W$ for the proof of Theorem~\ref{t:t2a}.

\begin{lemma} \label{l:abcont}
Let $q_0,q_1 > 1$, $(\ba,\bb) \in W$. 
If there exists $(\ba',\bb') \in W$ arbitrarily close to $(\ba,\bb)$ with $\ba' \prec \ba$ and $\bb' \succ \bb$, then $d_{q_0,q_1}$ is continuous at $(\ba,\bb)$. 
\end{lemma}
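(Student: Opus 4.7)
The plan is to combine Lemma~\ref{l:sclose} (continuity of $d_{q_0,q_1}$ on $W$) with Lemma~\ref{l:laba} (the identity $d_{q_0,q_1}(\ba'',\bb'') = d_{q_0,q_1}(\ell_{\ba'',\bb''},r_{\ba'',\bb''})$) by showing that $(\ell_{\ba'',\bb''},r_{\ba'',\bb''}) \to (\ba,\bb)$ in the product topology whenever $(\ba'',\bb'') \to (\ba,\bb)$. The cushion $(\ba',\bb') \in W$ given by the hypothesis, with $\ba' \prec \ba$ and $\bb' \succ \bb$, is what pins $(\ell_{\ba'',\bb''},r_{\ba'',\bb''})$ between $(\ba',\bb')$ and $(\ba'',\bb'')$.

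Given $\varepsilon > 0$, I would first invoke Lemma~\ref{l:sclose} to obtain an integer $N_0$ such that any pair in $W$ that agrees with $(\ba,\bb)$ on the first $N_0$ coordinates has $d_{q_0,q_1}$-value within $\varepsilon$ of $d_{q_0,q_1}(\ba,\bb)$. Using the hypothesis, I then pick $(\ba',\bb') \in W$ with $\ba' \prec \ba$, $\bb' \succ \bb$ that also agrees with $(\ba,\bb)$ on the first $N_0$ coordinates, and let $n_a,n_b > N_0$ denote the first positions where $\ba',\bb'$ differ from $\ba,\bb$. For any $(\ba'',\bb'')$ agreeing with $(\ba,\bb)$ on at least $N := \max(n_a,n_b)$ coordinates, a direct comparison at positions $n_a$ and $n_b$ forces $\ba' \prec \ba''$ and $\bb'' \prec \bb'$ strictly.

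The core step is then a sandwich: $\ba' \in \Omega_{\ba',\bb'} \subseteq \Omega_{\ba'',\bb''}$ together with $\ba' \prec \ba''$ gives $\ba' \preceq \ell_{\ba'',\bb''} \preceq \ba''$, and symmetrically $\bb'' \preceq r_{\ba'',\bb''} \preceq \bb'$. Since $\ba'$ and $\ba''$ share the first $N_0$ coordinates with $\ba$, any sequence $\bu$ with $\ba' \preceq \bu \preceq \ba''$ must share those coordinates with $\ba$ as well (a disagreement at some position $k \le N_0$ would push $\bu$ outside $[\ba',\ba'']$); applied to $\ell_{\ba'',\bb''}$ this places it in the same prefix-neighbourhood of $\ba$, and the analogous statement holds for $r_{\ba'',\bb''}$ and $\bb$. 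I expect this sandwich/lex-prefix step to be the only delicate point, as it is precisely where the strict inequalities $\ba' \prec \ba$, $\bb' \succ \bb$ are used.

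To close the argument, Lemma~\ref{l:laba} places $(\ell_{\ba'',\bb''},r_{\ba'',\bb''})$ in $W$ and gives $d_{q_0,q_1}(\ba'',\bb'') = d_{q_0,q_1}(\ell_{\ba'',\bb''},r_{\ba'',\bb''})$, so by the choice of $N_0$ the right-hand side lies within $\varepsilon$ of $d_{q_0,q_1}(\ba,\bb)$. Continuity of $d_{q_0,q_1}$ at $(\ba,\bb)$ follows.
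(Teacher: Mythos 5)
Your proposal is correct and follows essentially the same route as the paper: both use the cushion pair $(\ba',\bb') \in W$ together with $\Omega_{\ba',\bb'} \subseteq \Omega_{\ba'',\bb''}$ to sandwich $\ba' \preceq \ell_{\ba'',\bb''} \preceq \ba''$ and $\bb'' \preceq r_{\ba'',\bb''} \preceq \bb'$, conclude that $(\ell_{\ba'',\bb''},r_{\ba'',\bb''}) \to (\ba,\bb)$, and finish with Lemma~\ref{l:laba} and Lemma~\ref{l:sclose}. You merely make the $\varepsilon$--$N_0$ bookkeeping explicit where the paper compresses it into one sentence.
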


\begin{proof}
For $(\ba',\bb') \in W$, we have $\ba' \preceq \ell(\ba'',\bb'') \preceq \ba''$ and $\bb'' \preceq r(\ba'',\bb'') \preceq \bb'$ for all $\ba'' \succeq \ba'$, $\bb'' \preceq \bb'$.
Since $\ba'$ can be chosen arbitrarily close to the left of~$\ba$ and $\bb'$ arbitrarily close to the right of~$\bb$, this implies that $\ell$ and $r$ are continuous maps at~$(\ba,\bb)$, hence $d_{q_0,q_1}$ is continuous at $(\ba,\bb)$ by Lemma~\ref{l:sclose}.
\end{proof}

\begin{proposition} \label{p:nontogether}
Let $(\ba,\bb) = (a_1a_2\cdots, b_1b_2\cdots) \in W$, with $a_{m+1} a_{m+2} \cdots \ne \bb$ and $b_{m+1} b_{m+2}\cdots \ne \ba$ for all $m \ge 1$.
Then $d_{q_0,q_1}$ is continuous at $(\ba,\bb)$. 
\end{proposition}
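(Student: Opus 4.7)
The plan is to apply Lemma~\ref{l:abcont}: it suffices to exhibit, in every neighbourhood of $(\ba,\bb)$, a pair $(\ba',\bb') \in W$ with $\ba' \prec \ba$ and $\bb' \succ \bb$.

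First I would observe that the hypothesis upgrades the admissibility of shifts to strict inequalities: for every $m \ge 1$ with $a_{m+1} = 1$ one has $\sigma^m \ba \succ \bb$ strictly, and for every $m \ge 1$ with $b_{m+1} = 0$ one has $\sigma^m \bb \prec \ba$ strictly. This forces $\ba$ to contain infinitely many $1$'s and $\bb$ infinitely many $0$'s; indeed, if, say, $\ba = a_1 \cdots a_N 0^\infty$ with $a_N = 1$, the shift $\sigma^{N-1}\ba = 10^\infty$ would have to strictly exceed~$\bb$, contradicting that $10^\infty$ is the smallest sequence beginning with~$1$.

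For large $n$ with $a_{n+1} = 1$ and $m$ with $b_{m+1} = 0$, I would build periodic approximants
\[
\ba_{n,m} := (a_1 \cdots a_n b_1 \cdots b_m)^\infty, \qquad \bb_{n,m} := \sigma^n \ba_{n,m} = (b_1 \cdots b_m a_1 \cdots a_n)^\infty,
\]
two cyclic conjugates in one orbit. Writing $p$ for the first coordinate at which $\sigma^n\ba$ differs from $\bb$ (so $a_{n+p} = 1,\, b_p = 0$ by strictness) and $q$ for the analogous quantity on the other side, a direct comparison will show that $\ba_{n,m}$ first disagrees with~$\ba$ at coordinate $n+p$, giving $\ba_{n,m} \prec \ba$ whenever $p \le m$, and symmetrically $\bb_{n,m} \succ \bb$ at coordinate $m+q$ whenever $q \le n$; the agreement prefixes then grow without bound as $n, m \to \infty$, so $(\ba_{n,m},\bb_{n,m}) \to (\ba,\bb)$.

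The main obstacle is to verify $(\ba_{n,m},\bb_{n,m}) \in W$: each of the $n+m$ cyclic rotations of the defining word, read as a periodic sequence, must avoid the open interval $(\ba_{n,m},\bb_{n,m})$. On its common prefix with the corresponding shift $\sigma^j\ba$ or $\sigma^{j'}\bb$ each rotation is placed on the correct side of $\ba_{n,m}$ or $\bb_{n,m}$ by the strict hypothesis; the subtle point is the behaviour past the cyclic wrap-around, where an accidental alignment can send a rotation starting with~$1$ below $\bb_{n,m}$. This genuinely fails for certain small $(n,m)$, but infinitely many compatible pairs exist: I would select $n, m$ along the (by the strict-inequality hypothesis infinite) set of positions where the margins $p, q$ are large relative to the subsequent period structure, or refine iteratively so that each new choice of $(n,m)$ absorbs the combinatorial constraints produced at the previous scale. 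Once such pairs with $n, m \to \infty$ are produced, Lemma~\ref{l:abcont} closes the argument.
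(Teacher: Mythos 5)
Your strategy---reduce everything to Lemma~\ref{l:abcont} by producing admissible pairs $(\ba',\bb') \in W$ with $\ba' \prec \ba$ and $\bb' \succ \bb$ arbitrarily close to $(\ba,\bb)$---cannot work in general, because there are points satisfying the hypotheses of the proposition at which no such pairs exist at any scale. Take $\ba,\bb$ to be the fixed points of the Fibonacci substitution $\psi(0)=010$, $\psi(1)=10$ (Example~\ref{ex3} of the paper, which falls under this proposition). If $(\ba',\bb') \in W$ with $\ba' \prec \ba$ and $\bb' \succ \bb$, then ${]\ba',\bb'[} \supset [\ba,\bb]$, so $\ba' \in \Omega_{\ba',\bb'} \subseteq \Omega_{\ba,\bb}$, i.e., $\ba'$ lies in the Fibonacci shift and must be the maximum of its own shifts beginning with $0$. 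But the Fibonacci shift is minimal and $\ba$ is the maximum of its elements beginning with $0$, so the orbit of any $\ba' \prec \ba$ in this shift contains points beginning with $0$ that lie strictly between $\ba'$ and $\ba$; hence $\ba' \notin \Omega_{\ba',\bb'}$. So the set of ``compatible pairs'' you hope to extract is empty here, not merely sparse. This is exactly why the paper's proof splits into cases: when the periodic approximants $\varphi_k((10)^\infty)$ (resp.\ $\varphi'_k((01)^\infty)$) fail to land on the correct side of $\bb$ (resp.\ $\ba$), it abandons Lemma~\ref{l:abcont} entirely and instead sandwiches $(\ba,\bb)$ via $\varphi_k(0^\infty) \prec \ba \prec \varphi_k(01^\infty)$, $\varphi_k(10^\infty) \prec \bb \prec \varphi_k(1^\infty)$ and invokes Lemma~\ref{l:phik}, which bounds the oscillation of $d_{q_0,q_1}$ over a whole neighbourhood by $\dim_H \pi_{q_0,q_1}(\varphi_k(\{0,1\}^\infty)) \to 0$ without requiring any nearby admissible pair. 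Your proposal contains no substitute for this mechanism.

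A secondary, independent issue: even in the regime where outside approximation is possible, the membership $(\ba_{n,m},\bb_{n,m}) \in W$ for your words $(a_1\cdots a_n b_1\cdots b_m)^\infty$ is the entire content of the argument, and ``select $n,m$ where the margins are large relative to the subsequent period structure, or refine iteratively'' is not a proof. The paper's corresponding step is delicate: it uses a single word $a_1\cdots a_{n_k}$ (a prefix of $\ba$ cut along the specially defined sequence $(n_k)$ of return positions to prefixes of $\bb$), chooses the rotation point $m_k$ as an extremal cyclic rotation, and verifies admissibility by an explicit induction on the blocks $b_1\cdots b_{n_j-n_{j-1}}$. Your computation of the first disagreement positions $n+p$ and $m+q$ is correct as far as it goes, but it only establishes $\ba_{n,m} \prec \ba$ and $\bb_{n,m} \succ \bb$, which is the easy half.
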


\begin{proof}
If $a_2 = 0$, then $d_{q_0,q_1}$ is constantly 0 around $(\ba,\bb)$ because $\Omega_{\ba',\bb'} = \Omega_{0^\infty,\bb'}$ for all $\ba' \in 00\{0,1\}^\infty$, $\bb' \in \{0,1\}^\infty$.
The case $b_2 = 1$ is symmetric.
Assume in the following that $a_2 = 1$ and $b_2 = 0$. 
Define recursively the sequence $(n_k)_{k\ge0}$ by $n_0 = 1$ and $n_k > n_{k-1}$ minimal such that $a_{n_k+1} > b_{n_k-n_{k-1}+1}$, i.e., 
\[
\ba = 0\, b_1 \cdots b_{n_1-n_0}\, b_1 \cdots b_{n_2-n_1} \cdots.
\]
For $k \ge 1$, define the substitution~$\varphi_k$ by
\[
\varphi_k(0) = a_1 \cdots a_{m_k}, \quad \varphi_k(1) = a_{m_k+1} \cdots a_{n_k},
\]
with $1 \le m_k < n_k$ satisfying
\[
(a_{m_k+1} \cdots a_{n_k} a_1\cdots a_{m_k})^\infty = \min \{(a_{i+1} \cdots a_{n_k} a_1\cdots a_i)^\infty \,:\, 1 \le i < n_k,\, a_{i+1} = 1\}.
\]
For $1 \le i < n_k$, we have thus $(a_{i+1} \cdots a_{n_k} a_1\cdots a_i)^\infty \succeq \varphi_k(10)^\infty$ when $a_{i+1} = 1$, and $(a_{i+1} \cdots a_{n_k} a_1\cdots a_i)^\infty \preceq \varphi_k(01)^\infty$ when $a_{i+1} = 0$ because $a_{i+1} \cdots a_{n_k} a_1 \prec a_{i+1} \cdots a_{n_k+1} \preceq a_1 \cdots a_{n_k-i+1}$.
This implies that
\[
(\varphi_k(01)^\infty, \varphi_k(10)^\infty) \in W.
\]
Moreover, we have $m_k = n_j$ for some $0 \le j < k$.
Indeed, for $n_j < i < n_{j+1}$, $0 \le j < k$, with $a_{i+1} = 1$, we have
\begin{equation} \label{e:inj}
(a_{i+1} \cdots a_{n_k} a_1\cdots a_i)^\infty \succ (a_{n_j+1} \cdots a_{n_k} a_1\cdots a_{n_j})^\infty
\end{equation}
because
\begin{equation} \label{e:anj}
\begin{aligned}
a_{i+1} \cdots a_{n_{j+1}+1} & = b_{i-n_j+1} \cdots b_{n_{j+1}-n_j} 1 \succ b_{i-n_j+1} \cdots b_{n_{j+1}-n_j+1} \\
& \succeq b_1 \cdots b_{n_{j+1}-i+1} = a_{n_j+1} \cdots a_{n_j+n_{j+1}-i+1},
\end{aligned}
\end{equation}
which immediately gives \eqref{e:inj} when $j \le k{-}2$ and which, for $j = k{-}1$, implies that $a_{i+1} \cdots a_{n_k} a_1 \succeq a_{n_{k-1}+1} \cdots a_{n_{k-1}+n_k-i+1}$, thus
\[
a_{i+1} \cdots a_{n_k} a_1 \cdots a_{i-n_{k-1}+1} \succeq a_{n_{k-1}+1} \cdots a_{n_k+1} \succ a_{n_{k-1}+1} \cdots a_{n_k} a_1.
\]

Since $a_1 < a_{n_k+1}$ implies that $\varphi_k(01)^\infty \prec \ba$, and $\lim_{k\to\infty} \varphi_k(01)^\infty = \ba$, pairs $(\varphi_k(01)^\infty, \varphi_k(10)^\infty)$ satisfy the assumptions of Lemma~\ref{l:abcont} when $\varphi_k(10)^\infty$ is arbitrarily close to the right of~$\bb$.

Symmetrically, define the substitution~$\varphi'_k$, $k \ge 1$, by
\[
\varphi'_k(0) = b_{m'_k+1} \cdots b_{n'_k}, \quad \varphi'_k(1) = b_1 \cdots b_{m'_k},
\]
with the sequence $(n'_k)_{k\ge0}$ satisfying $n'_0 = 1$, $n'_k > n'_{k-1}$ minimal such that $b_{n'_k+1} < a_{n'_k-n'_{k-1}+1}$, and $1 \le m'_k < n'_k$ such that $\varphi'_k(01)^\infty \in \Omega_{\varphi'_k(01)^\infty,\varphi'_k(10)^\infty}$.
Then pairs $(\varphi'_k(01)^\infty, \varphi'_k(10)^\infty)$ satisfy the assumptions of Lemma~\ref{l:abcont} when $\varphi'_k(01)^\infty$ is arbitrarily close to the left of~$\ba$.

If $\varphi_k(10)^\infty$ is bounded away from the right of~$\bb$ for infinitely many~$k$, and $\varphi'_j(01)^\infty$ is bounded away from the left of~$\ba$ for infinitely many~$j$, then we obtain
\[
(\varphi_k(01)^\infty, \varphi'_j(10)^\infty) \in W
\]
for arbitarily large $n_k,n'_j$, thus we can apply again Lemma~\ref{l:abcont}.

It remains to consider the case that $\varphi_k(10)^\infty \preceq \bb$ for almost all~$k$, the case $\varphi'_k(01)^\infty \succeq \ba$ for almost all~$k$ being symmetric.
Since $m_k = n_j$ for some $j < k$, we have $\varphi_k(1^\infty) \succ \bb$. 
Moreover, $\varphi_{k+1}(10)^\infty \preceq \bb$ implies that $m_{k+1} = n_k$, thus $\ba$ starts with $\varphi_k(01) \varphi_{k+1}(1)$, and we have $\ba \prec \varphi_k(01^\infty)$ when $|\varphi_k(1)| < |\varphi_{k+1}(1)|$.
When $\ba \ne \varphi_k(01^\infty)$ for all $k \ge 1$, there are infinitely many~$k$ such that $|\varphi_k(1)| < |\varphi_{k+1}(1)|$, thus $\varphi_k(0^\infty) \prec \ba \prec \varphi_k(01^\infty)$ and $\varphi_k(10^\infty) \prec \bb \prec \varphi_k(1^\infty)$ for infinitely many~$k$, and $d_{q_0,q_1}$ is continuous at $(\ba,\bb)$ by Lemma~\ref{l:phik}. 

Finally, suppose that $\ba = \varphi_k(01^\infty)$ for some $k \ge 1$.
Then $\varphi_{k+1}(0) = \varphi_k(01)$, $\varphi_{k+1}(1) = \varphi_k(1)$, and, inductively, $\varphi_{k+j}(0) = \varphi_k(01^j)$, $\varphi_{k+j}(1) = \varphi_k(1)$, for all $j \ge 0$.
Since $\varphi_{k+j}(10)^\infty \preceq \bb$ for almost all~$j$, this would imply that $\varphi_k(101^\infty) \preceq \bb$, thus $b_{|\varphi_k(1)|+1} b_{|\varphi_k(1)|+2} \cdots \succeq \ba$, which contradicts the assumptions $\bb \in \Omega_{\ba,\bb}$ and $b_{m+1} b_{m+2} \cdots \ne \ba$ for all $m \ge 1$. 
This proves the proposition. 
\end{proof}

For the continuity of $d_{q_0,q_1}$, it only remains to prove the following proposition.

\begin{proposition} \label{p:together}
Let $(\ba,\bb) = (a_1a_2\cdots, b_1b_2\cdots) \in W$ with $a_{m+1} a_{m+2} \cdots = \bb$ or $b_{m+1} b_{m+2} \cdots = \ba$ for some $m \ge 1$.
Then $d_{q_0,q_1}$ is continuous at $(\ba,\bb)$. 
\end{proposition}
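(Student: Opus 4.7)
Plan:

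By symmetry we may assume $a_{m+1}a_{m+2}\cdots = \bb$ for some minimal $m \ge 1$; write $\bu = a_1\cdots a_m \in 0\{0,1\}^*$, so that $\ba = \bu\bb$. Since $\bu$ starts with $0$ and $\bb$ with $1$, comparison at position $m{+}1$ gives $\bu^\infty \prec \ba$, and Theorem~\ref{t:equaldim} then yields the key identity
\[
d_{q_0,q_1}(\bu^\infty,\bb) = d_{q_0,q_1}(\bu\bb,\bb) = d_{q_0,q_1}(\ba,\bb).
\]
Continuity at $(\ba,\bb)$ will then be proved by a monotonicity sandwich, converting the problem into a continuity statement at the ``simpler'' pair $(\bu^\infty,\bb)$.

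For $(\ba',\bb')$ close enough to $(\ba,\bb)$ we still have $\bu^\infty \preceq \ba'$. Set $\ba_+ = \max(\ba,\ba')$, $\bb_- = \min(\bb,\bb')$, $\bb_+ = \max(\bb,\bb')$. Using that $\Omega_{\ba,\bb}$ is nondecreasing in $\ba$ and nonincreasing in $\bb$, we get
\[
\Omega_{\bu^\infty,\bb_+} \subseteq \Omega_{\ba',\bb'} \subseteq \Omega_{\ba_+,\bb_-},
\]
hence $d_{q_0,q_1}(\bu^\infty,\bb_+) \le d_{q_0,q_1}(\ba',\bb') \le d_{q_0,q_1}(\ba_+,\bb_-)$. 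For the upper side, $\Omega_{\ba_+,\bb_-} \supseteq \Omega_{\ba,\bb}$ contains both $\ba$ and $\bb$, so $\ba \preceq \ell_{\ba_+,\bb_-} \preceq \ba_+$ and $\bb_- \preceq r_{\ba_+,\bb_-} \preceq \bb$; both endpoints collapse onto $\ba, \bb$ as $(\ba',\bb') \to (\ba,\bb)$, and Lemma~\ref{l:laba} together with Lemma~\ref{l:sclose} delivers $d_{q_0,q_1}(\ba_+,\bb_-) = d_{q_0,q_1}(\ell_{\ba_+,\bb_-},r_{\ba_+,\bb_-}) \to d_{q_0,q_1}(\ba,\bb)$.

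The lower side reduces to continuity of the marginal map $\bb'' \mapsto d_{q_0,q_1}(\bu^\infty,\bb'')$ at $\bb$, which is the main obstacle of the proof and will be deduced from joint continuity of $d_{q_0,q_1}$ at $(\bu^\infty,\bb)$. Because $\bu^\infty$ is purely periodic with period $m$, the ``together'' configuration at $(\bu^\infty,\bb)$ degenerates drastically: a shift of $\bu^\infty$ equals $\bb$ only when $\bb$ is purely periodic with period dividing $m$, and a shift of $\bb$ equals $\bu^\infty$ only when $\bb$ is eventually periodic with period $m$. If neither occurs then Proposition~\ref{p:nontogether} applies directly to $(\bu^\infty,\bb)$ and gives the required continuity. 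Otherwise $\bb = \bv\bu^\infty$ for some $\bv \in 1\{0,1\}^*$, and the second identity of Theorem~\ref{t:equaldim} (with $\ba = \bu^\infty$) gives $d_{q_0,q_1}(\bu^\infty,\bv^\infty) = d_{q_0,q_1}(\bu^\infty,\bv\bu^\infty) = d_{q_0,q_1}(\bu^\infty,\bb)$, reducing everything to the purely periodic pair $(\bu^\infty,\bv^\infty)$; for the latter the defining equation $\pi_{q_0^s,q_1^s}(\bu^\infty) = \pi_{q_0^s,q_1^s}(\bv^\infty)$ depends only on finite combinatorial data, and a finite iteration of the reduction (or the symmetric application of Proposition~\ref{p:nontogether} after swapping roles) combined with Lemma~\ref{l:sclose} settles continuity at this base case. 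The sandwich then gives $d_{q_0,q_1}(\ba',\bb') \to d_{q_0,q_1}(\ba,\bb)$, and the symmetric case $b_{n+1}b_{n+2}\cdots = \ba$ is handled identically using $d_{q_0,q_1}(\ba,\bv^\infty) = d_{q_0,q_1}(\ba,\bv\ba)$.
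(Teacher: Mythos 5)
Your reduction $d_{q_0,q_1}(\ba,\bb) = d_{q_0,q_1}(\bu\bb,\bb) = d_{q_0,q_1}(\bu^\infty,\bb)$ via the second part of Theorem~\ref{t:equaldim} is legitimate (that part is proved from Lemma~\ref{l:phi} alone, independently of Propositions~\ref{p:nontogether} and~\ref{p:together}, so the forward reference is not circular, though you should check and say this), and the upper half of your sandwich is correct: since $\ba,\bb \in \Omega_{\ba_+,\bb_-}$, we get $\ba \preceq \ell_{\ba_+,\bb_-} \preceq \ba_+$ and $\bb_- \preceq r_{\ba_+,\bb_-} \preceq \bb$, so Lemmas~\ref{l:laba} and~\ref{l:sclose} give $\limsup d_{q_0,q_1}(\ba',\bb') \le d_{q_0,q_1}(\ba,\bb)$.

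The genuine gap is in the lower half, i.e., in the right-continuity of $\bb'' \mapsto d_{q_0,q_1}(\bu^\infty,\bb'')$ at $\bb$ --- which is exactly the hard part of the proposition. Two concrete problems. First, Proposition~\ref{p:nontogether} applies only to pairs in $W$, and you never verify $(\bu^\infty,\bb) \in W$: the hole ${]\bu^\infty,\bb[}$ is strictly larger than ${]\ba,\bb[}$, and the hypothesis $\ba = \bu\bb \in \Omega_{\ba,\bb}$ controls the shifts $a_{i+1}\cdots a_m\bb$, not the cyclic shifts of $\bu^\infty$; a shift of $\bu^\infty$ or of $\bb$ may land strictly inside ${]\bu^\infty,\ba[}$ without equalling either endpoint. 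If $(\bu^\infty,\bb) \notin W$ you must pass to $(\ell_{\bu^\infty,\bb}, r_{\bu^\infty,\bb})$, and that pair can again be of ``together'' type, so your ``finite iteration of the reduction'' risks invoking Proposition~\ref{p:together} to prove itself; no termination argument is given. Second, in the case $\bb = \bv\bu^\infty$ you only establish the equality of \emph{values} $d_{q_0,q_1}(\bu^\infty,\bv^\infty) = d_{q_0,q_1}(\bu^\infty,\bb)$; the claim that continuity at the periodic pair ``depends only on finite combinatorial data'' is not an argument, since continuity concerns arbitrary nearby pairs. (Ironically, here you do not need continuity at $(\bu^\infty,\bv^\infty)$ at all: for $\bb \preceq \bb_+ \preceq \bv^\infty$ the inclusions $\Omega_{\bu^\infty,\bv^\infty} \subseteq \Omega_{\bu^\infty,\bb_+} \subseteq \Omega_{\bu^\infty,\bb}$ squeeze $d_{q_0,q_1}(\bu^\infty,\bb_+)$ to the common value. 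But this rescue only works when $\bb$ has tail $\bu^\infty$, and the remaining cases are exactly where your argument is missing.) By contrast, the paper proves the lower estimate by explicitly constructing substitutions $\varphi'_k$ from prefixes of $\ba$ and $\bb$, verifying by hand that the resulting pairs lie in $W$, and tracking $\ell_{\ba',\bb'}$ and $r_{\ba',\bb'}$ for all nearby $(\ba',\bb')$; some substitute for that work is unavoidable.
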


\begin{proof}
Assume that $b_{m+1} b_{m+2} \cdots = \ba$, the other case being symmetric.
Moreover, we assume w.l.o.g.\ that $m$ is minimal, i.e., $b_{i+1} b_{i+2} \cdots \ne \ba$ for all $i < m$.

If $a_{i+1}a_{i+2}\cdots = \bb$ for some $i \ge 1$, then $\ba = \varphi(01)^\infty$, $\bb = \varphi(10)^\infty$ with $\varphi(0) = a_1\cdots a_i$, $\varphi(1) = b_1\cdots b_m$.
Since $h(\Omega_{0110^\infty,1001^\infty}) = h(\Omega_{(01)^\infty,(10)^\infty}) = 0$ and $\ba \prec \varphi(0110^\infty)$, $\bb \succ \varphi(1001^\infty)$, $d_{q_0,q_1}$ is constant in a neighbourhood of $(\ba,\bb)$ by Lemma~\ref{l:phi}, thus continuous at $(\ba,\bb)$. 

Assume now that $a_{i+1}a_{i+2}\cdots \ne \bb$ for all $i \ge 1$.
Then we can define $n_k$ as in the proof of Proposition~\ref{p:nontogether}.
For $k \ge 0$, define the substitution~$\varphi'_k$ by
\[
\varphi'_k(0) = a_1 \cdots a_{n_k}, \quad \varphi'_k(1) = b_1 \cdots b_m.
\]
We claim that, for all $k \ge 1$,
\begin{equation} \label{e:phi'}
\big(\varphi'_k(01^\infty), \varphi'_k(1^\infty)\big) \in W.
\end{equation}

To prove the claim, note first that the only sequence of $\Omega_{\ba,\bb}$ starting with $\varphi'_k(1)0$ is~$\bb$.
Since $a_{n+1}a_{n+2}\cdots \ne \bb$ for all $n \ge 1$ and $\ba \in \Omega_{\ba,\bb}$, this implies that $\ba \in \Omega_{\ba,\varphi'_k(1^\infty)}$, thus 
\begin{equation} \label{e:ai}
a_1 \cdots a_i \varphi'_k(1^\infty) \preceq \ba \quad \mbox{for all $i \ge 1$ such that $a_{i+1} = 1$.}
\end{equation}
In particular, we have
\[
\varphi'_k(01^\infty) \preceq \ba
\]
and thus $n_{k+1}{-}n_k \le m$ for all $k \ge 0$.

We strengthen now \eqref{e:ai} to 
\begin{equation} \label{e:ai2}
a_1 \cdots a_i \varphi'_k(1^\infty) \preceq \varphi'_k(01^\infty) \quad \mbox{for all $1 \le i \le n_k$ such that $a_{i+1} = 1$,}
\end{equation}
and, equivalently, 
\begin{equation} \label{e:ai3}
a_{i+1} \cdots a_{n_k} \varphi'_k(1^\infty) \succeq \varphi'_k(1^\infty) \quad \mbox{for all $1 \le i < n_k$ such that $a_{i+1} = 1$.}
\end{equation}
Let $n_j \le i < n_{j+1}$, $0 \le j < k$, $a_{i+1} = 1$.
Then $a_{i+1} \cdots a_{n_{j+1}+1} \succ b_1 \cdots b_{n_{j+1}-i+1}$ by~\eqref{e:anj}, and $n_{j+1}{-}i \le n_{j+1}{-}n_j \le m$ implies that $a_{i+1} \cdots a_{n_{j+1}} \varphi'_k(1^\infty) \succeq \varphi'_k(1^\infty)$.
Recursively from $j = k{-}1$ to $j = 0$, we obtain that
\[
\begin{aligned}
a_{i+1} \cdots a_{n_k} \varphi'_k(1^\infty) & = a_{i+1} \cdots a_{n_{j+1}} a_{n_{j+1}+1} \cdots a_{n_k} \varphi'_k(1^\infty) \\
& \succeq a_{i+1} \cdots a_{n_{j+1}} \varphi'_k(1^\infty) \succeq \varphi'_k(1^\infty).
\end{aligned}
\]
This proves \eqref{e:ai3} and thus \eqref{e:ai2}.

To show that $\varphi'_k(1^\infty) \in \Omega_{\varphi'_k(01^\infty),\varphi'_k(1^\infty)}$, let $1 \le i < m$.
If $b_{i+1} = 1$, then $b_{i+1} \cdots b_m b_1 \succ b_{i+1} \cdots b_{m+1} \succeq b_1 \cdots b_{m-i+1}$, thus $b_{i+1} \cdots b_m \varphi'_k(1^\infty) \succ \varphi'_k(1^\infty)$.
If $b_{i+1} = 0$, then $b_{i+1} \cdots b_{m+1} \preceq a_1 \cdots a_{m-i+1}$, with $a_{m-i+1} = 1$ in case $b_{i+1} \cdots b_m = a_1 \cdots a_{m-i}$ because $a_{m-i+1} = 0$ would imply $\ba \preceq a_1 \cdots a_{m-i} \ba = b_{i+1} b_{i+2} \cdots \preceq \ba$, thus $b_{i+1} b_{i+2} \cdots = \ba$, contradicting the minimality of~$m$.
By \eqref{e:ai2}, which we can use because $m < m{+}n_{k-1} \le n_k$, we obtain that $b_{i+1} \cdots b_m \varphi'_k(1^\infty) \preceq \varphi'_k(01^\infty)$, thus $\varphi'_k(1^\infty) \in \Omega_{\varphi'_k(01^\infty),\varphi'_k(1^\infty)}$.
To prove \eqref{e:phi'}, it remains to consider $a_{i+1} \cdots a_{n_k} \varphi'_k(1^\infty)$ for $1 \le i < n_k$.
If $a_{i+1} = 0$, then $a_{i+1} \cdots a_{n_k+1} \preceq a_1 \cdots a_{n_k-i+1}$,  
$a_{n_k+1} = 1$, and \eqref{e:ai2} give that $a_{i+1} \cdots a_{n_k} \varphi'_k(1^\infty) \preceq \varphi'_k(01^\infty)$.
By \eqref{e:ai3}, we obtain $\varphi'_k(01^\infty) \in \Omega_{\varphi'_k(01^\infty),\varphi'_k(1^\infty)}$, i.e., \eqref{e:phi'} holds.

Consider now $\ba' \succeq \varphi'_k(01^\infty)$, $\bb' \preceq \varphi'_k(1^\infty)$.
Then $\ell(\ba',\bb') \succeq \varphi'_k(01^\infty)$ and $r(\ba',\bb') \preceq \varphi'_k(1^\infty)$.
In particular, $\ell(\ba',\bb')$ is close to~$\ba'$ for $\ba'$ close to the right of $\varphi'_k(01^\infty)$.
Since $b_{i+1} \cdots b_m \ba \succ \bb$ for all $1 \le i < m$ such that $b_{i+1} = 1$, we have $b_{i+1} \cdots b_m \ell(\ba',\bb') \succeq \bb'$ for all such~$i$ when $\ell(\ba',\bb')$ is close to~$\ba$ and $\bb'$ is close to~$\bb$.
Since $b_{i+1} \cdots b_m \ell(\ba',\bb') \preceq b_{i+1} \cdots b_m \varphi'_k(1^\infty) \preceq \varphi'_k(01^\infty)$ for $b_{i+1} = 0$, this implies that $b_{i+1} \cdots b_m \ell(\ba',\bb') \notin {]\ba',\bb'[}$ for all $1 \le i < m$.
If $\bb' \preceq \varphi'_k(1) \ell(\ba',\bb')$, then we obtain that $\varphi'_k(1) \ell(\ba',\bb') \in \Omega_{\ba',\bb'}$ and thus $r(\ba',\bb') \preceq \varphi'_k(1) \ell(\ba',\bb')$, hence $r(\ba',\bb')$ is close to~$\bb'$, and $d_{q_0,q_1}(\ba',\bb')$ is close to $d_{q_0,q_1}(\ba,\bb)$ by Lemma~\ref{l:sclose}.
If $\bb' \succ \varphi'_k(1) \ell(\ba',\bb')$, then Lemma~\ref{l:ab} gives that $r(\ba',\bb') = \varphi'_k(1^\infty)$.
Since $\Omega_{\ba,\bb} \setminus \Omega_{\ba,\varphi'_k(1^\infty)}$ contains only sequences ending with~$\bb$, we have $d_{q_0,q_1}(\ba,\bb) = d_{q_0,q_1}(\ba,\varphi'_k(1^\infty))$, and again $d_{q_0,q_1}(\ba',\bb')$ is close to $d_{q_0,q_1}(\ba,\bb)$ by Lemma~\ref{l:sclose}.
This proves that, for large~$k$, $d_{q_0,q_1}(\ba',\bb')$ is close to $d_{q_0,q_1}(\ba,\bb)$ for all $\ba'$ close to the right of $\varphi'_k(01^\infty)$ and close to~$\bb$.
If $\varphi'_k(01^\infty) \ne \ba$ for all~$k$, i.e., $\varphi'_k(01^\infty) \nearrow \ba$, then this proves the continuity of $d_{q_0,q_1}$ at $(\ba,\bb)$.
If $\varphi'_k(01^\infty) = \ba$ for some $k \ge 1$, then this holds for all sufficiently large~$k$, and the proof of Lemma~\ref{l:phik} with the substitutions~$\varphi'_k$ shows that $d_{q_0,q_1}(\ba',\bb')$ is close to $d_{q_0,q_1}(\ba,\bb)$ for all $\ba'$ close to the left of~$\ba$ and close to~$\bb$.
Therefore, $d_{q_0,q_1}$ is continuous at $(\ba,\bb)$ also in this case.
\end{proof}

\begin{proof}[Proof of Theorem~\ref{t:t2a}]
This follows from Lemma~\ref{l:abinOmega}, Propositions~\ref{p:nontogether} and~\ref{p:together}.
\end{proof}

The following examples illustrate the different cases of Propositions~\ref{p:nontogether} and~\ref{p:together}.

\begin{example}\label{ex1}
Let $\ba = 01101(10)^21(10)^3\cdots$ and $\bb = (10)^\infty$.
Then $(\ba, \bb) \in W$, and the substitution~$\varphi_k$ of Proposition~\ref{p:nontogether} is given by $\varphi_k(0) = 01101\cdots(10)^{k-2}1$ and $\varphi_k(1) = (10)^{k-1}1$ for $k \ge 1$, i.e., $n_k = k^2 {+} 1$ and $m_k = (k{-}1)^2 {+} 1$.
We have $\varphi_k((01)^\infty) \nearrow \ba$, $\varphi_k((10)^\infty) \searrow \bb$, and $(\varphi_k((01)^\infty), \varphi_k((10)^\infty) \in W$.
By Lemma~\ref{l:abcont}, $d_{q_0,q_1}$ is continuous at $(\ba,\bb)$. 
\end{example}

\begin{example}\label{ex2}
Let $\ba = (011)^\infty$ and $\bb = (10)^\infty$.
Then $(\ba, \bb) \in W$, and the substitution~$\varphi_k$ of Proposition~\ref{p:nontogether} is given by $\varphi_k(0) = 01(101)^{k-2}$ and $\varphi_k(1) = 101$ for $k \ge 2$, i.e., $n_k = 3k{-}1$ and $m_k = 3k{-}4$, hence $\varphi_k((10)^\infty) = (10(101)^{k-1})^\infty$ is bounded away from the right of~$\bb$ for $k \ge 2$.
The substitution~$\varphi'_k$ of Proposition~\ref{p:nontogether} is given by $\varphi'_k(0) = (01)^k$ and $\varphi'_k(1) = 1$ for $k \ge 1$, i.e., $n'_k = 2k{+}1$ and $m'_k = 1$, and $\varphi'_k((01)^\infty) = ((01)^k1)^\infty$ is bounded away from the left of~$\ba$ for $k \ge 2$.
We have thus $\varphi_k((01)^\infty) \nearrow \ba$, $\varphi'_k((10)^\infty) \searrow \bb$, and
\[
\big(\varphi_k((01)^\infty), \varphi'_k((10)^\infty)\big) = \big(((011)^{k-1}01)^\infty, ((10)^k1\big)^\infty) \in W \quad \mbox{for all}\ k \ge 2. 
\]
By Lemma~\ref{l:abcont}, $d_{q_0,q_1}$ is continuous at $(\ba,\bb)$. 
\end{example}

\begin{example} \label{ex3}
Let $\ba = 01010010\cdots$ and $\bb = 1001010 \cdots$ be the fixed points of the substitution~$\psi$ defined by $\psi(0) = 010$, $\psi(1) = 10$.
Then $\ba$ is the largest element of the Fibonacci shift starting with~$0$ and $\bb$ is the smallest element of the Fibonacci shift starting with~$1$; see e.g.~\cite{KomSteZou2022}.
Therefore, we have $(\ba,\bb) \in W$.
The substitution~$\varphi_k$ of Proposition~\ref{p:nontogether} is given by $\varphi_k(0) = \psi^{k-1}(0)$ and $\varphi_k(1) = \psi^k(1) = \psi^{k-1}(10)$, hence $\varphi_k((10)^\infty) = \psi^{k-1}((100)^\infty) \prec \psi^{k-1}(\bb) = \bb$ for all $k \ge 1$.
Symmetrically, we obtain that $\varphi'_k((01)^\infty) \succ \ba$ for all $k \ge 1$ for the substitution~$\varphi'_k$ of Proposition~\ref{p:nontogether}.
Since $\varphi_k(0^\infty) = \psi^{k-1}(0^\infty) \prec \ba \prec \psi^{k-1}((01)^\infty) = \varphi_k(01^\infty)$ and $\varphi_k(10^\infty) = \psi^{k-1}(10^\infty) \prec \bb \prec \psi^{k-1}((10)^\infty) = \varphi_k(1^\infty)$, we can apply Lemma~\ref{l:phik} to see that $d_{q_0,q_1}$ is continuous at $(\ba,\bb)$.
\end{example}

\begin{example}\label{ex4}
Let $\ba=(01110)^\infty$ and $\bb=(10011)^\infty$.
Then $(\ba,\bb) \in W$, and $\ba = \varphi((01)^\infty)$, $\bb = \varphi((10)^\infty)$ with $\varphi(0) = 011$, $\varphi(1) = 10$.
By Lemma~\ref{l:phi}, we have $d_{q_0,q_1}(\varphi(0^\infty),\varphi(1^\infty)) = d_{q_0,q_1}(\varphi(0110(01)^\infty),\varphi(1001(10)^\infty))$, thus $d_{q_0,q_1}$ is constant around $(\ba,\bb)$. 
\end{example}

\begin{example}\label{ex5}
Let $\ba=(011)^\infty$ and $\bb=10(011)^\infty$.
Then $(\ba, \bb) \in W$ and the substitution~$\varphi'_k$ of Proposition~\ref{p:together} is given by $\varphi'_k(1) = 10$, $\varphi'_{2k-1}(0) = (011)^{k-1}01$, $\varphi'_{2k}(0) = (011)^k0$, for all $k \ge 1$.
We have $((011)^k(01)^\infty,(10)^\infty) \in W$ for all $k \ge 0$.
Let $(011)^k(01)^\infty \preceq \ba' \preceq (011)^k(10)^\infty$.
For $10(011)^k(01)^\infty \preceq \bb' \preceq 10\ba'$, we have $(011)^k(01)^\infty \preceq \ell(\ba',\bb') \preceq \ba'$ and $r(\ba',\bb') = 10\ell(\ba',\bb')$, thus $(\ell(\ba',\bb'), r(\ba',\bb'))$ is close to $(\ba,\bb)$ for large~$k$, hence $d_{q_0,q_1}(\ell(\ba',\bb'), r(\ba',\bb'))$ is close to $d_{q_0,q_1}(\ba,\bb)$.
For $10\ba' \prec \bb' \preceq (10)^\infty$, we have $(011)^k(01)^\infty \preceq \ell(\ba',\bb') \preceq \ba'$ and $r(\ba',\bb') = (10)^\infty$, thus $(\ell(\ba',\bb'), r(\ba',\bb'))$ is close to $(\ba,(10)^\infty)$ for large~$k$, hence $d_{q_0,q_1}(\ba',\bb')$ is close to $d_{q_0,q_1}(\ba,(10)^\infty) = d_{q_0,q_1}(\ba,\bb)$.
\end{example}

\begin{example}\label{ex6}
Let $\ba = 011(10)^\infty$ and $\bb = 10011(10)^\infty$.
Then $(\ba, \bb) \in W$ and the substitution~$\varphi'_k$ of Proposition~\ref{p:together} is given by $\varphi'_k(0) = 011(10)^{k-2}$, $\varphi'_k(1) = 10$, for all $k \ge 2$.
We have thus $\ba = \varphi'_k(01^\infty)$ for all $k \ge 2$.
Let $(\ba',\bb')$ be close to $(\ba,\bb)$.
If $\ba' \succeq \ba$ and $\bb' \preceq 10\ba'$, then $(\ell(\ba',\bb'), r(\ba',\bb'))$ is close to $(\ba,\bb)$, hence $d_{q_0,q_1}(\ba',\bb')$ is close to $d_{q_0,q_1}(\ba,\bb)$.
If $\ba' \succeq \ba$ and $\bb' \succ 10\ba'$, then $\ell(\ba',\bb')$ is close to~$\ba$ and $r(\ba',\bb') = (10)^\infty$, hence $d_{q_0,q_1}(\ba',\bb')$ is close to $d_{q_0,q_1}(\ba,(10)^\infty) = d_{q_0,q_1}(\ba,\bb)$.
If $\varphi'_k(0^\infty) \preceq \ba' \preceq \varphi'_k(01^\infty) = \ba$ and $\varphi'_k(10^\infty) \preceq \bb' \preceq \varphi'_k(1^\infty)$ for some large~$k$, then $d_{q_0,q_1}(\ba',\bb')$ is close to $d_{q_0,q_1}(\ba,\bb)$ by the proof of Lemma~\ref{l:phik}.
\end{example}

\begin{lemma} \label{l:qclose}
Let $\ba \in 0\{0,1\}^\infty\!$, $\bb \in 0\{0,1\}^\infty\!$.
Then the map $(q_0,q_1) \mapsto d_{q_0,q_1}(\ba,\bb)$ is continuous for $q_0,q_1 > 1$.
\end{lemma}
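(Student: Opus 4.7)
The plan is to mimic the argument of Lemma~\ref{l:sclose}, swapping the roles of the ``parameter'' $(\ba,\bb)$ and the ``coordinates'' $(q_0,q_1)$. First I would reduce to $(\ba,\bb)\in W$: since $\ell_{\ba,\bb}$ and $r_{\ba,\bb}$ are defined purely from the subshift $\Omega_{\ba,\bb}$ and do not depend on $(q_0,q_1)$, Lemma~\ref{l:laba} gives $d_{q_0,q_1}(\ba,\bb)=d_{q_0,q_1}(\ell_{\ba,\bb},r_{\ba,\bb})$ for every $(q_0,q_1)$, and the right-hand side has its first argument in $W$. If $h(\Omega_{\ell_{\ba,\bb},r_{\ba,\bb}})=0$ the function is identically zero by Theorem~\ref{t:dimH}~(\ref{i:21}); otherwise Theorem~\ref{t:dimH}~(\ref{i:22}) tells us that $d_{q_0,q_1}(\ba,\bb)=\min\{1,s(q_0,q_1)\}$, where $s(q_0,q_1)$ is the maximal positive root of
\[
\tilde K_{\ba,\bb}(t;q_0,q_1):=\bigl(\pi_{q_0^t,q_1^t}(\bb)-\pi_{q_0^t,q_1^t}(\ba)\bigr)(q_1^t-1)=0,
\]
with notation as in Lemma~\ref{l:sclose}. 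So it suffices to prove continuity of $(q_0,q_1)\mapsto s(q_0,q_1)$.

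The key ingredient is the joint continuity of $\tilde K_{\ba,\bb}(t;q_0,q_1)$ on $\{(t,q_0,q_1):t>0,\,q_0,q_1>1\}$: on compact subsets thereof, the series defining $\pi_{q_0^t,q_1^t}(\cdot)$ are uniformly dominated by a geometric series in $1/\min(q_0,q_1)^{t_0}$ (with $t_0$ a lower bound for $t$), so they converge uniformly. Furthermore, equations \eqref{e:LAQ} and~\eqref{e:QK} give the identity $\tilde K_{\ba,\bb}(t;q_0',q_1')=1/Q_{\ba,\bb}(q_0'^{-t},q_1'^{-t})$ for every $t>s(q_0',q_1')$, and, crucially, the power series $Q_{\ba,\bb}$ depends only on the (fixed) subshift, not on $(q_0',q_1')$. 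Consequently $\tilde K_{\ba,\bb}(\cdot;q_0',q_1')$ is strictly positive and strictly increasing above its own largest root, for every $(q_0',q_1')$.

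Given these two facts, I would run the IVT/sign argument of Lemma~\ref{l:sclose} verbatim, but with the variable $(q_0,q_1)$ in place of $(\ba,\bb)$: fix $(q_0,q_1)$, set $s=s(q_0,q_1)$, and choose $\varepsilon>0$ arbitrarily small with $\tilde K_{\ba,\bb}(s-\varepsilon;q_0,q_1)\neq0$, which is possible by analyticity of $t\mapsto\tilde K_{\ba,\bb}(t;q_0,q_1)$. Joint continuity then yields a neighborhood $V_\varepsilon$ of $(q_0,q_1)$ on which $\tilde K_{\ba,\bb}(s-\varepsilon;\cdot)$ keeps its sign and $\tilde K_{\ba,\bb}(s+\varepsilon;\cdot)>0$ is preserved; the intermediate value theorem produces a root $s(q_0',q_1')\in(s-\varepsilon,s+\varepsilon)$ for all $(q_0',q_1')\in V_\varepsilon$.

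The one point requiring care — and the main obstacle — is showing that the root we produce is actually the \emph{largest} root of $\tilde K_{\ba,\bb}(\cdot;q_0',q_1')$, i.e., that perturbing $(q_0,q_1)$ cannot create a fresh, larger root hidden beyond $s+\varepsilon$. This is exactly where the second fact above pays off: since $(\ba,\bb)\in W$ is unchanged, the identity $\tilde K_{\ba,\bb}(t;q_0',q_1')=1/Q_{\ba,\bb}(q_0'^{-t},q_1'^{-t})$ forces $\tilde K_{\ba,\bb}(\cdot;q_0',q_1')$ to be strictly positive and monotone above its largest root, ruling out any root $\ge s+\varepsilon$ once $\tilde K_{\ba,\bb}(s+\varepsilon;q_0',q_1')>0$. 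Hence $|s(q_0',q_1')-s(q_0,q_1)|<\varepsilon$ on $V_\varepsilon$, which gives the continuity of $s$ and, composing with $\min\{1,\cdot\}$, that of $d_{q_0,q_1}(\ba,\bb)$.
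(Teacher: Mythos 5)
Your proposal is correct and takes essentially the same route as the paper, which reduces to $(\ba,\bb) \in W$ via $d_{q_0,q_1}(\ba,\bb) = d_{q_0,q_1}(\ell_{\ba,\bb},r_{\ba,\bb})$ and then reruns the root-perturbation argument of Lemma~\ref{l:sclose} for $\hat{K}_{q_0,q_1}(t) = \big(\pi_{q_0^t,q_1^t}(\bb)-\pi_{q_0^t,q_1^t}(\ba)\big)(q_1^t{-}1)$ with $(q_0,q_1)$ as the varying parameter. Your key observation --- that $Q_{\ba,\bb}$ depends only on the fixed subshift, so strict monotonicity above the largest root holds for every perturbed $(q_0',q_1')$ without any analogue of the hypothesis $(\ba',\bb') \in W$ --- is exactly what makes the paper's ``very similar'' remark go through.
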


\begin{proof}
Since $d_{q_0,q_1}(\ba,\bb) = d_{q_0,q_1}(\ell(\ba,\bb),r(\ba,\bb))$, we can assume w.l.o.g.\ $(\ba,\bb) {\,\in\,} W$.
Then the proof is very similar to that of Lemma~\ref{l:sclose}, by considering
\[
\hat{K}_{q_0, q_1}(t) := \big(\pi_{q_0^t,q_1^t}(\bb) - \pi_{q_0^t,q_1^t}(\ba)\big)\,  (q_1^t{-}1)
\]
instead of the function $\tilde{K}_{\ba,\bb}$ from the proof of Lemma~\ref{l:sclose}.
\end{proof}

\begin{proof}[Proof of Theorem~\ref{t:t2}]
Let $q_0,q_1,q'_0,q'_1>1$. 
Recall that the quasi-greedy $(q_0,q_1)$-expansion of a number $x \in [0,1/(q_1{-}1]$ is the sequence $i_1i_2\cdots$ such that $i_n=1$ if $T_{i_{n-1}} \circ \cdots \circ T_{i_1}(x) > 1/q_1$, $i_n = 0$ otherwise, with $T_i$ defined in~\eqref{e:Ti}.
Let $\ba_{q_0,q_1} = a_1a_2\cdots$. 
If $T_{a_n} \circ \cdots \circ T_{a_1}(1/q_1) \ne 1/q_1$ for all $n \ge 1$, then $\ba_{q'_0,q'_1}$ is close to $\ba_{q_0,q_1}$ when $(q'_0,q'_1)$ is close to $(q_0,q_1)$.
If $\ba_{q_0,q_1} = (a_1\cdots a_m)^\infty$, and $m$ is minimal with this property, then $\ba_{q'_0,q'_1}$ is close to $\ba_{q_0,q_1}$ or to $a_1\cdots a_m 10^\infty$. 
In the latter case, we have $\Omega_{\ba_{q'_0,q'_1},\bb_{q'_0,q'_1}} = \Omega_{\ba_{q_0,q_1},\bb_{q'_0,q'_1}}$ by Lemma~\ref{l:ab}.
Similarly, $\bb_{q'_0,q'_1}$ is close to $\bb_{q_0,q_1}$ or to $b_1\cdots b_n 01^\infty$, with $\Omega_{\ba_{q'_0,q'_1},\bb_{q'_0,q'_1}} = \Omega_{\ba_{q'_0,q'_1},(b_1\cdots b_n)^\infty}$. 
Therefore, $d_{q_0,q_1}(\ba_{q_0,q_1},\bb_{q_0,q_1})$ is close to $d_{q_0,q_1}(\ba_{q'_0,q'_1},\bb_{q'_0,q'_1})$ by Theorem~\ref{t:t2a}, thus close to $d_{q'_0,q'_1}(\ba_{q'_0,q'_1},\bb_{q'_0,q'_1})$ by Lemma~\ref{l:qclose}.
\end{proof}

It only remains to prove Theorem~\ref{t:equaldim}, using the following results.

\begin{lemma} \label{l:uvtilde}
Let $\bu \in 0\{0,1\}^*$, $\bv \in 1\{0,1\}^*$.
Then
\[
\Omega_{\tilde{\bu}^\infty,\tilde{\bv}^\infty} = \Omega_{\bu^\infty,\bv^\infty} \subseteq \Omega_{\bu\bv^\infty,\bv\bu^\infty} \subseteq \Omega_{\tilde{\bu}\tilde{\bv}^\infty,\tilde{\bv}\tilde{\bu}^\infty}.
\]
for some prefix~$\tilde{\bu}$ of~$\bu$ and some prefix~$\tilde{\bv}$ of~$\bv$ with $\tilde{\bu}^\infty, \tilde{\bv}^\infty \in \Omega_{\tilde{\bu}^\infty,\tilde{\bv}^\infty}$.
\end{lemma}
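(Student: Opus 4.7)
I take $\tilde{\bu}$ and $\tilde{\bv}$ to be the prefixes of $\bu$ and $\bv$ (provided by Lemma~\ref{l:charab}) for which $\tilde{\bu}^\infty=\ell_{\bu^\infty,\bv^\infty}$ and $\tilde{\bv}^\infty=r_{\bu^\infty,\bv^\infty}$. Lemma~\ref{l:laba} then yields both $\Omega_{\tilde{\bu}^\infty,\tilde{\bv}^\infty}=\Omega_{\bu^\infty,\bv^\infty}$ and $\tilde{\bu}^\infty,\tilde{\bv}^\infty\in\Omega_{\tilde{\bu}^\infty,\tilde{\bv}^\infty}$, which gives the leftmost equality in the stated chain as well as the final ``in~$W$'' clause. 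The inclusion $\Omega_{\bu^\infty,\bv^\infty}\subseteq\Omega_{\bu\bv^\infty,\bv\bu^\infty}$ is immediate: since $\bu$ starts with $0$ and $\bv$ starts with $1$, we have $\bu^\infty\prec\bu\bv^\infty$ and $\bv\bu^\infty\prec\bv^\infty$, hence ${]\bu\bv^\infty,\bv\bu^\infty[}\subseteq{]\bu^\infty,\bv^\infty[}$.

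The main work is the last inclusion, which reduces to the two inequalities $\bu\bv^\infty\preceq\tilde{\bu}\tilde{\bv}^\infty$ and $\tilde{\bv}\tilde{\bu}^\infty\preceq\bv\bu^\infty$ (the second being proved the same way). Since $\bv^\infty\preceq\tilde{\bv}^\infty$ and both sequences share the prefix $\bu$, it suffices to show $\bu\tilde{\bv}^\infty\preceq\tilde{\bu}\tilde{\bv}^\infty$. I work inside $\Omega_{\bu^\infty,\tilde{\bv}^\infty}=\Omega_{\bu^\infty,\bv^\infty}$, in which $\tilde{\bv}^\infty$ is already admissible, and iterate Lemma~\ref{l:ab}(i) to obtain a chain of prefixes $\bu=\bu_0,\bu_1,\ldots,\bu_K=\tilde{\bu}$, where each $\bu_{i+1}$ is the length-$n$ prefix of $\bu_i$ for the minimal $n$ with $\sigma^{n}(\bu_i^\infty)\in{]\bu_i^\infty,\tilde{\bv}^\infty[}$. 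The target inequality then follows from the single-step claim $\bu_i\tilde{\bv}^\infty\preceq\bu_{i+1}\tilde{\bv}^\infty$ at each step.

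Writing $\bu_i=\bu_{i+1}w$, the comparison, after cancelling the common prefix $\bu_{i+1}$, becomes $w\tilde{\bv}^\infty$ versus $\tilde{\bv}^\infty$. If $w$ differs from the length-$|w|$ initial segment of $\tilde{\bv}^\infty$ at some position $k$, then the strict bound $\sigma^{n}(\bu_i^\infty)=w\bu_i^\infty\prec\tilde{\bv}^\infty$ forces the $k$th symbol of $w$ to be strictly smaller than that of $\tilde{\bv}^\infty$, giving $w\tilde{\bv}^\infty\prec\tilde{\bv}^\infty$. If $w$ matches this segment, the comparison reduces to $\tilde{\bv}^\infty$ versus $\sigma^{|w|}(\tilde{\bv}^\infty)$; membership $\tilde{\bv}^\infty\in\Omega_{\bu_i^\infty,\tilde{\bv}^\infty}$ forces this shift to be either $\preceq\bu_i^\infty$ or $\succeq\tilde{\bv}^\infty$, and the first alternative would yield $w\bu_i^\infty\succeq\tilde{\bv}^\infty$, contradicting the reduction step; hence $\sigma^{|w|}(\tilde{\bv}^\infty)\succeq\tilde{\bv}^\infty$, which gives $w\tilde{\bv}^\infty\preceq\tilde{\bv}^\infty$. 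The main obstacle is this case~B exclusion, where the strictness of the reduction must be transferred into an inequality involving the shift of $\tilde{\bv}^\infty$; once this is in place, iterating the single-step inequality gives $\bu\tilde{\bv}^\infty\preceq\tilde{\bu}\tilde{\bv}^\infty$, and combining with $\bu\bv^\infty\preceq\bu\tilde{\bv}^\infty$ completes the proof.
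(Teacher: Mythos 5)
Your proof is correct, and while it sets things up exactly as the paper does, it proves the crucial inequality by a genuinely different argument. Like the paper, you take $\tilde{\bu}^\infty = \ell_{\bu^\infty,\bv^\infty}$, $\tilde{\bv}^\infty = r_{\bu^\infty,\bv^\infty}$, get the equality and the first inclusion for free, and reduce the last inclusion to $\bu\bv^\infty \preceq \tilde{\bu}\tilde{\bv}^\infty$ (and its mirror image). The paper then argues by contradiction: assuming $\bu\bv^\infty \succ \tilde{\bu}\tilde{\bv}^\infty$, it first forces $\tilde{\bu}\tilde{\bv}^\infty \preceq \bu^\infty$ using $\tilde{\bv}^\infty \in \Omega_{\bu^\infty,\bv^\infty}$, concludes $\tilde{\bu}\tilde{\bv}^\infty \notin \Omega_{\bu^\infty,\bv^\infty}$ since it exceeds $\ell$, locates a tail $u_{n+1}\cdots u_{|\tilde{\bu}|}\tilde{\bv}^\infty$ in the forbidden interval with $u_{n+1}=0$, and contradicts the minimality of the period $|\tilde{\bu}|$ of $\ell$ via Lemma~\ref{l:charab}. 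You instead argue directly and inductively: you make explicit the reduction chain $\bu = \bu_0, \bu_1, \dots, \bu_K = \tilde{\bu}$ implicit in the proof of Lemma~\ref{l:charab}, and prove the one-step monotonicity $\bu_i\tilde{\bv}^\infty \preceq \bu_{i+1}\tilde{\bv}^\infty$ by the clean dichotomy on whether the discarded suffix $w$ matches the corresponding prefix of $\tilde{\bv}^\infty$; your case-B exclusion (using $\tilde{\bv}^\infty \in \Omega_{\bu_i^\infty,\tilde{\bv}^\infty}$ to force $\sigma^{|w|}(\tilde{\bv}^\infty) \succeq \tilde{\bv}^\infty$) is the right replacement for the paper's minimal-period contradiction. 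Your version is somewhat longer but more constructive and avoids the paper's slightly delicate trichotomy on the position of $\tilde{\bu}\tilde{\bv}^\infty$ relative to $\bu^\infty$ and $\bu\bv^\infty$; it also has the incidental benefit that the chain automatically produces $\tilde{\bu}$ as a genuine prefix of $\bu$ (just take $\tilde{\bu} := \bu_K$ rather than defining it through Lemma~\ref{l:charab} first, since that lemma a priori only yields a prefix of $\bu^\infty$).
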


\begin{proof}
By Lemma~\ref{l:charab}, we have $\ell(\bu^\infty,\bv^\infty) = \tilde{\bu}^\infty$, $r(\bu^\infty,\bv^\infty) = \tilde{\bv}^\infty$ for some prefix~$\tilde{\bu}$ of~$\bu$ and some prefix~$\tilde{\bv}$ of~$\bv$.
Let $\tilde{\bu}, \tilde{\bv}$ be the shortest such prefixes.
Then $\tilde{\bu}^\infty, \tilde{\bv}^\infty \in \Omega_{\tilde{\bu}^\infty,\tilde{\bv}^\infty} = \Omega_{\bu^\infty,\bv^\infty} \subseteq \Omega_{\bu\bv^\infty,\bv\bu^\infty}$, and it only remains to prove that $\Omega_{\bu\bv^\infty,\bv\bu^\infty} \subseteq \Omega_{\tilde{\bu}\tilde{\bv}^\infty,\tilde{\bv}\tilde{\bu}^\infty}$.
Suppose that $\bu\bv^\infty \succ \tilde{\bu} \tilde{\bv}^\infty$.
Since $\tilde{\bv}^\infty \in \Omega_{\bu^\infty,\bv^\infty}$, we cannot have $\bu^\infty \prec \tilde{\bu} \tilde{\bv}^\infty \prec \bu\bv^\infty$, thus we must have $\tilde{\bu} \tilde{\bv}^\infty \preceq \bu^\infty$.
Since $\tilde{\bu} \tilde{\bv}^\infty \succ \tilde{\bu}^\infty = \ell(\bu^\infty,\bv^\infty)$, we have $\tilde{\bu} \tilde{\bv}^\infty \notin \Omega_{\bu^\infty,\bv^\infty}$, hence $u_{n+1} \cdots u_{|\tilde{\bu}|} \tilde{\bv}^\infty \in (\bu^\infty, \bv^\infty)$ for some $1 \le n < |\tilde{\bu}|$, where we write $\bu = u_1 \cdots u_{|\bu|}$.
Since $\tilde{\bu}^\infty \in \Omega_{\bu^\infty,\bv^\infty}$, this implies that $u_{n+1} = 0$.
Then $\tilde{\bu} \tilde{\bv}^\infty \preceq \bu^\infty$ implies that $u_{n+1} \cdots u_{|\bu|} \bu^\infty \in (\bu^\infty, \bv^\infty)$.
By the proof of Lemma~\ref{l:charab}, we obtain that the period of $\ell(\bu^\infty,\bv^\infty)$ is at most~$n$, contradicting that it is~$|\tilde{\bu}|$.
Therefore, we have $\bu\bv^\infty \preceq \tilde{\bu} \tilde{\bv}^\infty$.
Symmetrically, we obtain that $\tilde{\bv}\tilde{\bu}^\infty \preceq \bv\bu^\infty$, thus $\Omega_{\bu\bv^\infty,\bv\bu^\infty} \subseteq \Omega_{\tilde{\bu}\tilde{\bv}^\infty,\tilde{\bv}\tilde{\bu}^\infty}$.
\end{proof}

\begin{lemma} \label{l:notuvprime}
Let $\bu = \tilde{\bu}\tilde{\bv}$, $\bv = \tilde{\bv}\tilde{\bu}$ for some $\tilde{\bu} \in 0\{0,1\}^*$, $\tilde{\bv} \in 1\{0,1\}^*$.
Then
\begin{equation} \label{e:Omegauv}
\Omega_{\tilde{\bu}^\infty,\tilde{\bv}^\infty} \subseteq \Omega_{\bu^\infty,\bv^\infty} \subseteq \Omega_{\bu\bv^\infty,\bv\bu^\infty} \subseteq \Omega_{\tilde{\bu}\tilde{\bv}^\infty,\tilde{\bv}\tilde{\bu}^\infty}
\end{equation}
and 
$d_{q_0,q_1}(\bu^\infty,\bv^\infty) = d_{q_0,q_1}(\tilde{\bu}^\infty,\tilde{\bv}^\infty)$ for all $q_0,q_1 > 1$.
\end{lemma}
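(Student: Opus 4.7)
The plan is to split the statement into (a) the chain of inclusions~\eqref{e:Omegauv} and (b) the dimensional equality $d_{q_0,q_1}(\bu^\infty,\bv^\infty) = d_{q_0,q_1}(\tilde{\bu}^\infty,\tilde{\bv}^\infty)$. These require very different inputs: (a) is pure lexicographic bookkeeping, while (b) is a one-line consequence of Lemma~\ref{l:phi} once the right substitution is identified.

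For (a), I would first recall that $\Omega_{\ba,\bb}$ depends only on the open interval ${]\ba,\bb[}$ and shrinks as that interval widens. Hence the three inclusions follow from the six strict comparisons
\[
\tilde{\bu}^\infty \prec \bu^\infty \prec \bu\bv^\infty \prec \tilde{\bu}\tilde{\bv}^\infty \quad \text{and} \quad \tilde{\bv}\tilde{\bu}^\infty \prec \bv\bu^\infty \prec \bv^\infty \prec \tilde{\bv}^\infty.
\]
Each of these has the same flavor: after a common prefix the two sequences differ in whether the next block is $\tilde{\bu}$ (starting with $0$) or $\tilde{\bv}$ (starting with $1$). For example, $\bu^\infty = (\tilde{\bu}\tilde{\bv})^\infty$ and $\bu\bv^\infty = \tilde{\bu}\tilde{\bv}(\tilde{\bv}\tilde{\bu})^\infty$ share the prefix $\tilde{\bu}\tilde{\bv}$ and then compare $\tilde{\bu}$ to $\tilde{\bv}$; the other five comparisons are analogous.

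For (b), I would introduce the substitution $\tau$ defined by $\tau(0) = \tilde{\bu}$ and $\tau(1) = \tilde{\bv}$. Since $\tilde{\bu} \in 0\{0,1\}^*$ and $\tilde{\bv} \in 1\{0,1\}^*$, $\tau$ meets the hypothesis of Lemma~\ref{l:phi}. A direct calculation gives
\[
\tau(0^\infty) = \tilde{\bu}^\infty,\ \ \tau(1^\infty) = \tilde{\bv}^\infty,\ \ \tau((01)^\infty) = (\tilde{\bu}\tilde{\bv})^\infty = \bu^\infty,\ \ \tau((10)^\infty) = (\tilde{\bv}\tilde{\bu})^\infty = \bv^\infty.
\]
Taking $\ba = (01)^\infty$ and $\bb = (10)^\infty$, and invoking the fact (recorded in the introduction just before Theorem~\ref{t:equaldim}) that $h(\Omega_{(01)^\infty,(10)^\infty}) = 0$, the ``in particular'' clause of Lemma~\ref{l:phi} yields
\[
d_{q_0,q_1}(\bu^\infty,\bv^\infty) = d_{q_0,q_1}(\tau((01)^\infty),\tau((10)^\infty)) = d_{q_0,q_1}(\tau(0^\infty),\tau(1^\infty)) = d_{q_0,q_1}(\tilde{\bu}^\infty,\tilde{\bv}^\infty),
\]
as required.

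The only conceptual step is recognizing that one and the same substitution $\tau$ realizes $(\tilde{\bu}^\infty,\tilde{\bv}^\infty)$ as $(\tau(0^\infty),\tau(1^\infty))$ and simultaneously $(\bu^\infty,\bv^\infty)$ as $(\tau((01)^\infty),\tau((10)^\infty))$; after this observation, Lemma~\ref{l:phi} together with the zero-entropy input at $((01)^\infty,(10)^\infty)$ closes the proof in one stroke, with no estimates to carry out.
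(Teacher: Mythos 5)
Your proposal is correct and follows essentially the same route as the paper: the inclusions come from the chain of lexicographic comparisons $\tilde{\bu}^\infty \preceq \bu^\infty \prec \bu\bv^\infty \preceq \tilde{\bu}\tilde{\bv}^\infty$ and $\tilde{\bv}\tilde{\bu}^\infty \preceq \bv\bu^\infty \prec \bv^\infty \preceq \tilde{\bv}^\infty$, and the dimension equality is obtained by applying Lemma~\ref{l:phi} to the substitution $0 \mapsto \tilde{\bu}$, $1 \mapsto \tilde{\bv}$ at $(\ba,\bb) = ((01)^\infty,(10)^\infty)$, using $h(\Omega_{(01)^\infty,(10)^\infty}) = 0$. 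Your write-up merely makes explicit the substitution that the paper leaves implicit.
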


\begin{proof}
We obviously have $\tilde{\bu}^\infty \preceq \bu^\infty \prec \bu\bv^\infty \preceq \tilde{\bu} \tilde{\bv}^\infty$ and $\tilde{\bv}\tilde{\bu}^\infty \preceq \bv\bu^\infty \prec \bv^\infty \preceq \tilde{\bv}^\infty$, which implies \eqref{e:Omegauv}.
The equality $d_{q_0,q_1}(\bu^\infty,\bv^\infty) = d_{q_0,q_1}(\tilde{\bu}^\infty,\tilde{\bv}^\infty)$ follows from Lemma~\ref{l:phi} because $h(\Omega_{(01)^\infty,(10)^\infty}) = 0$.
\end{proof}

\begin{proposition} \label{p:uv}
Let $\bu \in 01\{0,1\}^*$, $\bv \in 10\{0,1\}^*$ be such that $\bu^\infty, \bv^\infty \in \Omega_{\bu^\infty,\bv^\infty}$ and there is no decomposition $\bu = \tilde{\bu}\tilde{\bv}$, $\bv = \tilde{\bv}\tilde{\bu}$.
Then $d_{q_0,q_1}(\bu^\infty,\bv^\infty) = d_{q_0,q_1}(\bu\bv^\infty,\bv\bu^\infty)$ for all $q_0,q_1 > 1$.
\end{proposition}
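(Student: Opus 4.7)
The plan is to derive the proposition from an algebraic factorisation of the relevant kneading invariants. For a finite word~$\bw$, write $P_\bw := \pi_{q_0,q_1}(\bw 0^\infty)$ and $Z_\bw := (q_{w_1}\cdots q_{w_{|\bw|}})^{-1}$, so that $\pi_{q_0,q_1}(\bw^\infty) = P_\bw/(1{-}Z_\bw)$ and $\pi_{q_0,q_1}(\bu\bv^\infty) = P_\bu + Z_\bu\,\pi_{q_0,q_1}(\bv^\infty)$.  A short computation yields the key identity
\[
\pi_{q_0,q_1}(\bu\bv^\infty) - \pi_{q_0,q_1}(\bv\bu^\infty) = (1 - Z_\bu - Z_\bv)\bigl(\pi_{q_0,q_1}(\bu^\infty) - \pi_{q_0,q_1}(\bv^\infty)\bigr).
\]
Replacing $(q_0,q_1)$ by $(q_0^s,q_1^s)$ and invoking Proposition~\ref{p:K}, the maximal positive root of $\pi_{q_0^s,q_1^s}(\bu\bv^\infty) = \pi_{q_0^s,q_1^s}(\bv\bu^\infty)$ equals $\max(s_1,s^*)$, where $s_1$ is the maximal positive root of $\pi_{q_0^s,q_1^s}(\bu^\infty) = \pi_{q_0^s,q_1^s}(\bv^\infty)$ and $s^*$ is the unique positive solution of $Z_\bu(s) + Z_\bv(s) = 1$.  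Hence $d_{q_0,q_1}(\bu^\infty,\bv^\infty) = \min\{1,s_1\}$ while $d_{q_0,q_1}(\bu\bv^\infty,\bv\bu^\infty) = \min\{1,\max(s_1,s^*)\}$.

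The proposition therefore reduces to showing $s_1 \geq s^*$ whenever $s_1 < 1$ (above this range both dimensions are trivially~$1$).  Equivalently, since the auxiliary function $\tilde K_{\bu^\infty,\bv^\infty}$ is strictly increasing above its largest root (compare the proof of Lemma~\ref{l:sclose}), it suffices to verify
\[
\pi_{q_0^{s^*}\!,q_1^{s^*}}(\bu^\infty) \geq \pi_{q_0^{s^*}\!,q_1^{s^*}}(\bv^\infty).
\]
Substituting $1 - Z_\bu = Z_\bv$ at $s = s^*$, a direct computation shows that the affine contractions $F_\bw(x) := P_\bw + Z_\bw x$ satisfy $F_\bu(\pi_{q_0^{s^*}\!,q_1^{s^*}}(\bv^\infty)) = F_\bv(\pi_{q_0^{s^*}\!,q_1^{s^*}}(\bu^\infty))$, so the IFS $\{F_\bu, F_\bv\}$ is \emph{just touching} at the critical exponent and the desired inequality reduces to a statement about the orientation (left versus right fixed point) of the attractor.

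The main obstacle is this orientation step, which is where the non-decomposability hypothesis must be invoked.  The strategy is by contradiction: if $s_1 < s^*$, then $\pi_{q_0^{s^*}\!,q_1^{s^*}}(\bu^\infty) < \pi_{q_0^{s^*}\!,q_1^{s^*}}(\bv^\infty)$ strictly, and the just-touching point of the IFS admits the two distinct symbolic representations $\bu\bv^\infty$ and $\bv\bu^\infty$.  Using the extremality $\bu^\infty,\bv^\infty \in \Omega_{\bu^\infty,\bv^\infty}$ together with the prescribed leading digits $\bu \in 01\{0,1\}^*$ and $\bv \in 10\{0,1\}^*$, a symbolic comparison in the spirit of Lemmas~\ref{l:ab}--\ref{l:charab} would extract prefixes $\tilde\bu \in 0\{0,1\}^*$ and $\tilde\bv \in 1\{0,1\}^*$ satisfying $\bu = \tilde\bu\tilde\bv$ and $\bv = \tilde\bv\tilde\bu$, contradicting the hypothesis.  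Making this final symbolic extraction precise—particularly handling the borderline case $s_1 = s^*$ illustrated by Example~\ref{ex2}—is the delicate heart of the proof.
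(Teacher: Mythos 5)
Your factorization identity is correct and is a genuinely nice observation: writing $P_{\bw} = (1-Z_{\bw})\pi_{q_0,q_1}(\bw^\infty)$ one checks directly that $\pi(\bu\bv^\infty)-\pi(\bv\bu^\infty) = (1-Z_{\bu}-Z_{\bv})\big(\pi(\bu^\infty)-\pi(\bv^\infty)\big)$, so the kneading invariant of $(\bu\bv^\infty,\bv\bu^\infty)$ factors through that of $(\bu^\infty,\bv^\infty)$, and the proposition reduces (via Proposition~\ref{p:K}) to the single inequality $s^*\le s_1$. This is consistent with, and in a sense explains, the paper's argument, where the same threshold $s^*$ appears as the exponent $t$ of the IFS $\{\bu,\bv\}^\infty$ bounding $\dim_H\pi_{q_0,q_1}(\Omega_{\bu\bv^\infty,\bv\bu^\infty}\setminus\Omega_{\bu^\infty,\bv^\infty})$. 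However, there are two gaps, one of which is fatal as written.

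First, the secondary gap: Proposition~\ref{p:K} applies only to pairs in~$W$, and $(\bu\bv^\infty,\bv\bu^\infty)\in W$ does not follow from $(\bu^\infty,\bv^\infty)\in W$. For example, if $\bu=\tilde{\bu}^k$ with $k\ge 2$, then $\sigma^{|\tilde{\bu}|}(\bu\bv^\infty)=\tilde{\bu}^{k-1}\bv^\infty$ starts with $0$ and is strictly greater than $\bu\bv^\infty$, so $\bu\bv^\infty\notin\Omega_{\bu\bv^\infty,\bv\bu^\infty}$; the paper disposes of the power case separately at the start of its proof, and you would need to do the same plus verify admissibility in the primitive case. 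Second, and decisively: the inequality $s^*\le s_1$ is exactly where the hypothesis of non-decomposability must enter, and you have not proved it. Your proposed contradiction cannot work in the form sketched: the coincidence $\pi_{q_0^{s^*},q_1^{s^*}}(\bu\bv^\infty)=\pi_{q_0^{s^*},q_1^{s^*}}(\bv\bu^\infty)$ is an identity forced by the factorization (the factor $1-Z_{\bu}-Z_{\bv}$ vanishes at $s^*$) and holds for \emph{every} pair $(\bu,\bv)$, decomposable or not, so the existence of "two symbolic representations of the touching point" carries no combinatorial information and cannot yield the decomposition $\bu=\tilde{\bu}\tilde{\bv}$, $\bv=\tilde{\bv}\tilde{\bu}$. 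The actual mechanism in the paper is entirely different: one takes the maximal overlaps $n,m$ with $v_{k+1}\cdots v_{k+n}=u_1\cdots u_n$ and $u_{k+1}\cdots u_{k+m}=v_1\cdots v_m$, shows $\{u_1\cdots u_n, v_1\cdots v_m\}^\infty\subseteq\Omega_{\bu^\infty,\bv^\infty}$ (the construction from \cite{BarSteVin2014}), uses non-decomposability to prove $n\le|\bv|$ and $m\le|\bu|$, and deduces $s_1\ge s\ge s^*$ by comparing the two Moran equations. This lower-bound construction, which is the heart of the proof, is absent from your proposal, and you acknowledge as much.
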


\begin{proof}
Assume first that $\bu = \tilde{\bu}^k$ for some $\tilde{\bu} \in 01\{0,1\}^*$, $k \ge 2$.
Then Lemma~\ref{l:phi} implies that $d_{q_0,q_1}(\tilde{\bu}^\infty,\bv^\infty) = d_{q_0,q_1}(\tilde{\bu}^k\bv^\infty,\bv\tilde{\bu}^\infty)$ because $h(\Omega_{0^k\ba,\bb}) = h(\Omega_{0^\infty,\bb}) = 0$ for all $\ba,\bb$, thus $d_{q_0,q_1}(\bu^\infty,\bv^\infty) = d_{q_0,q_1}(\bu\bv^\infty,\bv\bu^\infty)$.
Symmetrically, the proposition holds when $\bv = \tilde{\bv}^k$ for some $\tilde{\bv} \in 10\{0,1\}^*$, $k \ge 2$.

Assume in the following that $\bu$ and $\bv$ are not powers of smaller words, and write $\bu^\infty = u_1 u_2 \cdots$, $\bv^\infty = v_1 v_2 \cdots$.
Then the absence of a decomposition $\bu = \tilde{\bu}\tilde{\bv}$, $\bv = \tilde{\bv}\tilde{\bu}$ implies that $u_{k+1} u_{k+2} \cdots \ne \bv^\infty$ and $v_{k+1} v_{k+2} \cdots \ne \bu^\infty$ for all $k \ge 0$.
Let $m,n \ge 1$ be maximal such that 
\[
\begin{aligned}
u_{k+1} \cdots u_{k+m} & = v_1 \cdots v_m \quad \mbox{for some}\ 1 \le k < |\bu|, \\
 v_{k+1} \cdots v_{k+n} & = u_1 \cdots u_n \quad \mbox{for some}\ 1 \le k < |\bv|,
\end{aligned}.
\]
Then we have, as in the proof of \cite[Lemma~8]{BarSteVin2014},
\[
\{u_1\cdots u_n,v_1 \cdots v_m\}^\infty \subseteq \Omega_{\bu^\infty,\bv^\infty},
\]
thus 
\begin{equation} \label{e:dimuv}
d_{q_0,q_1}(\bu^\infty,\bv^\infty) \ge \dim_H \pi_{q_0,q_1}\big(\{u_1\cdots u_n,v_1\cdots v_m\}^\infty\big) = \min\{s,1\}, 
\end{equation}
with
\[
\frac{1}{q_{u_1}^s\cdots q_{u_n}^s} + \frac{1}{q_{v_1}^s\cdots q_{v_m}^s} = 1.
\]
Suppose that $n > |\bv|$.
Then $v_{k+1} \cdots v_{k+|\bv|} = u_1 \cdots u_{|\bv|}$ and $u_{|\bv|+1}  = v_{k+|\bv|+1} = v_{k+1} = u_1 = 0$. 
Then $\bv^\infty \in \Omega_{\bu^\infty,\bv^\infty}$ implies that $(u_1 \cdots u_{|\bv|})^\infty \preceq \bu^\infty$, thus
\[
(u_1 \cdots u_{|\bv|})^\infty \preceq u_{|\bv|+1} u_{|\bv|+2} \cdots \preceq \bu^\infty,
\]
hence $u_{|\bv|+1} \cdots u_{2|\bv|} = u_1 \cdots u_{|\bv|}$.
Inductively, we obtain that $(u_1 \cdots u_{|\bv|})^\infty = \bu^\infty$, i.e., $\bu = \tilde{\bu} \tilde{\bv}$, $\bv = \tilde{\bv} \tilde{\bu}$ with $\tilde{\bu} = u_1 \cdots u_{|\bv|-k}$, $\tilde{\bv} = v_1 \cdots v_k$, contradicting our assumption.
This implies that $n \le |\bv|$, hence $q_{u_1} \cdots q_{u_n} \le q_{v_1} \cdots q_{v_{|\bv|}}$.
Symmetrically, we have $m \le |\bu|$, hence $q_{v_1} \cdots q_{v_m} \le q_{u_1} \cdots q_{u_{|\bu|}}$.
By Lemma~\ref{l:phi}, we have 
\begin{equation} \label{e:dimuvvu}
\dim_H \pi_{q_0,q_1}\big(\Omega_{\bu\bv^\infty,\bv\bu^\infty} \setminus \Omega_{\bu^\infty,\bv^\infty}\big) \le \dim_H \pi_{q_0,q_1}\big(\{\bu,\bv\}^\infty\big) = \min\{t,1\}, 
\end{equation}
with
\[
\frac{1}{q_{u_1}^t\cdots q_{u_{|\bu|}}^t} + \frac{1}{q_{v_1}^t\cdots q_{v_{|\bv|}}^t} = 1.
\]
Then $q_{u_1} \cdots q_{u_n} \le q_{v_1} \cdots q_{v_{|\bv|}}$ and $q_{v_1} \cdots q_{v_m} \le q_{u_1} \cdots q_{u_{|\bu|}}$ imply that $s \ge t$, thus \eqref{e:dimuvvu} and \eqref{e:dimuv} give that  $d_{q_0,q_1}(\bu^\infty,\bv^\infty) = d_{q_0,q_1}(\bu\bv^\infty,\bv\bu^\infty)$.
\end{proof}

\begin{proof}[Proof of Theorem~\ref{t:equaldim}]
We show first \eqref{e:equaldimuv}.
By Lemmas~\ref{l:uvtilde} and \ref{l:notuvprime}, it is sufficient to prove the statement for $\bu,\bv$ with $\bu^\infty, \bv^\infty \in \Omega_{\bu^\infty,\bv^\infty}$ and no decomposition $\bu = \tilde{\bu}\tilde{\bv}$, $\bv = \tilde{\bv}\tilde{\bu}$.
The inequality $h(\Omega_{\bu^\infty,\bv^\infty}) > 0$ implies that $\bu \in 01\{0,1\}^*$, $\bv \in 10\{0,1\}^*$, thus the statement follows from Proposition~\ref{p:uv}.

For $d_{q_0,q_1}(\bu^\infty,\bb) = d_{q_0,q_1}(\bu\bb,\bb)$, it is sufficient to note that each sequence in $\Omega_{\bu\bb,\bb} \setminus \Omega_{\bu^\infty,\bb}$ contains $\bu 1$ and ends therefore with $\bu \bb$, cf.\ the proof of Lemma~\ref{l:phi}.
The equality $d_{q_0,q_1}(\ba,\bv^\infty) = d_{q_0,q_1}(\ba,\bv\ba)$ is symmetric.
\end{proof}

\color{black}

\end{document}